\numberwithin{equation}{section}
\newtheorem{thm}{Theorem}[section]
\newtheorem{lem}[thm]{Lemma}
\newtheorem{prop}[thm]{Proposition}
\newtheorem{cor}[thm]{Corollary}
\newtheorem*{thm*}{Theorem}
\newtheorem{prob}[thm]{Problem}
\theoremstyle{definition}
\newtheorem{defi}[thm]{Definition}
\newtheorem{rem}[thm]{Remark}
\newcommand{\mc}[1]{\mathcal{#1}}
\newcommand{\mb}[1]{\mathbb{#1}}
\newcommand{\mr}[1]{\mathrm{#1}}
\newcommand{\us}[1]{\upshape{#1}}
\newcommand{\C}{\mathbb{C}}
\newcommand{\N}{\mathbb{N}}
\newcommand{\R}{\mathbb{R}}
\newcommand{\Z}{\mathbb{Z}}
\newcommand{\la}{\langle}
\newcommand{\ra}{\rangle}
\renewcommand{\epsilon}{\varepsilon}
\renewcommand{\phi}{\varphi}
\renewcommand{\bar}{\overline}
\renewcommand{\tilde}{\widetilde}
\newcommand{\Cs}{\ensuremath{C^*}}
\newcommand{\F}{\mb F}
\newcommand{\id}{\mr{id}}
\newcommand{\tensor}{\otimes}
\newcommand{\ev}{\mr{ev}}
\newcommand{\Tr}{\mr{Tr}}
\newcommand{\HS}{{B_2}}
\newcommand{\cb}{\mr{cb}}
\newcommand{\conv}{\mr{conv}}
\newcommand{\sym}{\mr{sym}}
\def\blfootnote{\xdef\@thefnmark{}\@footnotetext}
\begin{document}
\selectlanguage{english}

\begin{abstract}
In order to investigate the relationship between weak amenability and the Haagerup property for groups, we introduce the weak Haagerup property, and we prove that having this approximation property is equivalent to the existence of a semigroup of Herz-Schur multipliers generated by a proper function (see Theorem~\ref{lem:wh}). It is then shown that a (not necessarily proper) generator of a semigroup of Herz-Schur multipliers splits into a positive definite kernel and a conditionally negative definite kernel. We also show that the generator has a particularly pleasant form if and only if the group is amenable.

In the second half of the paper we study semigroups of radial Herz-Schur multipliers on free groups. We prove that a generator of such a semigroup is linearly bounded by the word length function (see Theorem~\ref{thm:B}).
\end{abstract}

%%%%%%%%% Title %%%%%%%%%

\title{Semigroups of Herz-Schur multipliers}
\author{S\o ren Knudby}
\thanks{Supported by ERC Advanced Grant No. OAFPG 247321 and the Danish National Research Foundation through the
Centre for Symmetry and Deformation (DNRF92).\\
E-mail: knudby@math.ku.dk}
\address{Department of Mathematical Sciences, University of Copenhagen,
\newline Universitetsparken 5, DK-2100 Copenhagen \O, Denmark}
\email{knudby@math.ku.dk}

\date{\today}
\maketitle

%\blfootnote{Supported by the Danish National Research Foundation through the Centre for Symmetry and Deformation (DNRF92)}

%%%%%%%%%% Text starts here %%%%%%%%%%

\section{Introduction}
The notion of amenability for groups was introduced by von Neumann \cite{amenable} and has played an important role in the field of operator algebras for many years. It is well-known that amenability of a group is reflected by approximation properties of the \Cs-algebra and von Neumann algebra associated with the group. More precisely, a discrete group is amenable if and only if its (reduced or universal) group \Cs-algebra is nuclear if and only if its group von Neumann algebra is semidiscrete.

Amenability may be seen as a rather strong condition to impose on a group, and several weakened forms have appeared, two of which are \emph{weak amenability} and the \emph{Haagerup property}. Recall that a discrete group $G$ is amenable if and only if there is a net $(\phi_i)_{i\in I}$ of finitely supported, positive definite functions on $G$ such that $\phi_i\to 1$ pointwise. When the discrete group is countable, which will always be our assumption in this paper, we can of course assume that the net is actually a sequence. We have included a few well-known alternative characterizations of amenability in Theorem~\ref{thm:amenable}.

A countable, discrete group $G$ is called \emph{weakly amenable} if there exist $C > 0$ and a net $(\phi_i)_{i\in I}$ of finitely supported Herz-Schur multipliers on $G$ converging pointwise to 1 and $\|\phi_i\|_{B_2} \leq C$ for all $i\in I$ where $\|\cdot\|_{B_2}$ denotes the Herz-Schur norm. The infimum of all $C$ for which such a net exists, is called the \emph{Cowling-Haagerup constant} of $G$, usually denoted $\Lambda_{\mr{cb}}(G)$.

The countable, discrete group $G$ has the \emph{Haagerup property} if there is a net $(\phi_i)_{i\in I}$ of positive definite functions on $G$ converging pointwise to 1 such that each $\phi_i$ vanishes at infinity. An equivalent condition is the existence of a conditionally negative definite function $\psi:G\to\R$ such that $\psi$ is proper, i.e. $\{ g\in G \mid |\psi(g)| < n\}$ is finite for each $n\in\N$ (see for instance \cite[Theorem 2.1.1]{MR1852148}). It follows from Schoenberg's Theorem that given such a $\psi$, the family $(e^{-t\psi})_{t>0}$ witnesses the Haagerup property.

For a general treatment of weak amenability and the Haagerup property, including examples of groups with and without these properties, we refer the reader to \cite{BO}.

Since positive definite functions are also Herz-Schur multipliers with norm 1, it is clear that amenability is stronger than both weak amenability with (Cowling-Haagerup) constant 1 and the Haagerup property. A natural question to ask is how weak amenability and the Haagerup property are related. For a long time the known examples of weakly amenable groups with constant 1 also had the Haagerup property and vice versa. Also, the groups that were known to not be weakly amenable also failed the Haagerup property. So it seemed natural to ask if the Haagerup property is equivalent to weak amenability with constant 1. This turned out to be false, and the first counterexample was the wreath product $\Z/2 \wr \F_2$. This group is defined as the semidirect product $(\bigoplus_{\F_2} \Z/2) \rtimes \F_2$, where the action $\F_2 \curvearrowright \bigoplus_{\F_2} \Z/2$ is the shift. In \cite{MR2393636} it is shown that the group $\Z/2 \wr \F_2$ has the Haagerup property, and in \cite[Corollary 2.12]{MR2680430} it was proved that $\Z/2 \wr \F_2$ is not weakly amenable with constant 1. In fact, the group is not even weakly amenable \cite[Corollary 4]{MR2914879}.

It is still an open question if groups that are weakly amenable with constant 1 have the Haagerup property. It may be formulated as follows. Given a net $(\phi_i)_{i\in I}$ of finitely supported functions on $G$ such that $\|\phi_i\|_{B_2} \leq 1$ and $\phi_i\to 1$ pointwise, does there exist a proper, conditionally negative definite function on $G$? We do not answer this question here, but we consider the following related problem. If we replace the condition that each $\phi_i$ is finitely supported with the condition that $\phi_i$ vanishes at infinity, what can then be said? We make the following definition.

\begin{defi}
A discrete group $G$ has the \emph{weak Haagerup property} if there exist $C > 0$ and a net $(\phi_i)_{i\in I}$ of Herz-Schur multipliers on $G$ converging pointwise to $1$ such that each $\phi_i$ vanishes at infinity and satisfies $\|\phi_i\|_{B_2} \leq C$. If we may take $C = 1$, then $G$ has the weak Haagerup property \emph{with constant 1}.
\end{defi}

A priori the weak Haagerup property is even less tangible than weak amenability, but the point is that with the weak Haagerup property with constant 1, we can assume that the net in question is a semigroup of the form $(e^{-t\phi})_{t>0}$, as the following holds.

\begin{thm}\label{lem:wh}
For a countable, discrete group $G$, the following are equivalent.
\begin{enumerate}
	\item  There is a sequence $(\phi_n)$ of functions vanishing at infinity such that $\phi_n\to 1$ pointwise and $\|\phi_n\|_{B_2} \leq 1$ for all $n$.
	\item There is $\phi:G \to \R$ such that $\phi$ is proper and $|| e^{-t\phi}||_{B_2} \leq 1$ for every $t>0$.
\end{enumerate}
\end{thm}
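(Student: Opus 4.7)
Direction $(2) \Rightarrow (1)$ I would obtain immediately by putting $\phi_n := e^{-\phi/n}$. The assumption $\|e^{-\phi}\|_{B_2} \leq 1$ forces $e^{-\phi(g)} \leq 1$ pointwise, so $\phi \geq 0$, whence properness gives $\phi_n \in C_0(G)$; pointwise convergence $\phi_n \to 1$ and the norm estimate are built into the hypothesis.

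For the substantial direction $(1) \Rightarrow (2)$ the plan is to mimic the classical construction of a proper conditionally negative definite function from a $C_0$ approximate identity of positive definite functions, building the generator as a series of defects $1 - \phi_n$. First I would reduce to $\phi_n$ real-valued and symmetric (i.e.\ $\phi_n(g^{-1}) = \phi_n(g)$) by replacing $\phi_n$ with $g \mapsto \tfrac{1}{2}\mr{Re}(\phi_n(g) + \phi_n(g^{-1}))$; this preserves every hypothesis and ensures $|\phi_n| \leq \|\phi_n\|_{B_2} \leq 1$, so that $\psi_n := 1 - \phi_n \in [0,2]$. The technical key is the observation that $B_2(G)$ is a Banach algebra under pointwise product, so the series
\[
e^{-t(1-\phi_n)} = e^{-t}\sum_{k=0}^{\infty} \frac{t^k}{k!}\, \phi_n^k
\]
converges in $B_2$ and yields $\|e^{-t\psi_n}\|_{B_2} \leq 1$ for every $t > 0$. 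In other words, each single $\psi_n$ already generates a semigroup of unit-norm Herz-Schur multipliers.

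Next I would enumerate $G = \{g_1, g_2, \ldots\}$ and diagonally extract a subsequence $(\phi_{n_k})$ satisfying $\psi_{n_k}(g_j) \leq 2^{-k}$ whenever $j \leq k$, then set $\psi := \sum_{k=1}^{\infty} \psi_{n_k}$. Pointwise finiteness is immediate from the geometric tail. For properness of $\psi$, given $N$ I pick $M > 2N$: if $\psi(g) \leq N$ then $\sum_{k=1}^M \psi_{n_k}(g) \leq N$, so some $k \leq M$ satisfies $\phi_{n_k}(g) > 1/2$, confining $g$ to the finite union $\bigcup_{k \leq M} \{h : \phi_{n_k}(h) > 1/2\}$, which is finite because each $\phi_{n_k}$ vanishes at infinity. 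Finally, the partial sums $\psi^{(M)} = \sum_{k=1}^M \psi_{n_k}$ give $e^{-t\psi^{(M)}} = \prod_{k=1}^M e^{-t\psi_{n_k}}$, so submultiplicativity in $B_2$ yields $\|e^{-t\psi^{(M)}}\|_{B_2} \leq 1$; since $\psi^{(M)} \nearrow \psi$ pointwise, lower semicontinuity of the Herz-Schur norm under uniformly bounded pointwise convergence gives $\|e^{-t\psi}\|_{B_2} \leq 1$, as required.

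The critical step, and the point where the weak Haagerup setting genuinely differs from the Haagerup one, is the estimate $\|e^{-t(1-\phi_n)}\|_{B_2} \leq 1$ derived from $\|\phi_n\|_{B_2} \leq 1$: it is what plays the role of Schoenberg's theorem in the positive definite case. Once this and the lower semicontinuity of $\|\cdot\|_{B_2}$ on uniformly bounded pointwise convergent nets are in hand, the Banach algebra structure of $B_2$, the standard diagonal extraction, and the familiar properness argument do the rest.
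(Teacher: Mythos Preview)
Your proof is correct and follows essentially the same route as the paper's: build the generator as a convergent series $\sum_k (1-\phi_{n_k})$ of defects, obtain the semigroup norm bound from the Banach-algebra estimate $\|e^{-t(1-\phi_n)}\|_{B_2} = e^{-t}\|e^{t\phi_n}\|_{B_2} \leq e^{-t}e^{t\|\phi_n\|_{B_2}} \leq 1$, and conclude properness by a pigeonhole argument together with pointwise closedness of the unit ball of $B_2(G)$. The only cosmetic differences are that the paper normalizes via $|\phi_n|^2$ (forcing $0\le\phi_n\le1$) rather than your symmetrized real part, and it allows variable weights $\alpha_n$ in the series where you take unit weights.
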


The proof of the above is reminiscent of the proof concerning the equivalent formulations of the Haagerup property (see Theorem 2.1.1 in the book \cite{MR1852148}). We provide a proof in Section~\ref{sec:prelim} (see Theorem~\ref{lem:WH}).

Clearly, weak amenability with constant 1 implies the weak Haagerup property with constant 1, and the converse is false by the example $\Z/2 \wr \F_2$ from before. It is also obvious that the Haagerup property implies the weak Haagerup property with constant 1. It is not clear, however, if they are in fact equivalent.

In the light of the previous theorem we consider the following problem.

\begin{prob}\label{prob:2}
Let $G$ be a countable, discrete group, and let $\phi:G\to\R$ be a symmetric function satisfying $\| e^{-t\phi} \|_{B_2} \leq 1$ for all $t> 0$. Does there exist a conditionally negative definite function $\psi$ on $G$ such that $\phi \leq \psi$?
\end{prob}

Note that $\phi$ is proper if and only if each $e^{-t\phi}$ vanishes at infinity. A positive solution to the problem would prove that the Haagerup property is equivalent to the weak Haagerup property with constant 1. So in particular, a solution to Problem~\ref{prob:2} would prove that weak amenability with constant 1 implies the Haagerup property.

We will prove the following theorem.

\begin{thm}\label{thm:A}
Let $G$ be a countable, discrete group with a symmetric function $\phi:G\to\R$. Then $\|e^{-t\phi}\|_{B_2} \leq 1$ for every $t>0$ if and only if $\phi$ splits as
$$
\phi(y^{-1}x) = \psi(x,y) + \theta(x,y) + \theta(e,e) \qquad (x,y\in G),
$$
where
\begin{itemize}
\renewcommand{\labelitemi}{$\cdot$}
	\item $\psi$ is a conditionally negative definite kernel on $G$ vanishing on the diagonal,
	\item and $\theta$ is a bounded, positive definite kernel on $G$.
\end{itemize}
\end{thm}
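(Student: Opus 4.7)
The plan is to prove both implications. For $(\Leftarrow)$, observe first that substituting $x = y$ and using $\psi(x,x) = 0$ forces $\theta(x,x) = \phi(e) - \theta(e,e)$, which is independent of $x$; taking in addition $x = y = e$ pins down $\theta(e,e) = \phi(e)/2$, so $\theta$ has constant diagonal $\phi(e)/2$ and is automatically bounded. I then factor
\[
e^{-t\phi(y^{-1}x)} = e^{-t\psi(x,y)} \cdot \bigl(e^{-t\phi(e)/2} e^{-t\theta(x,y)}\bigr).
\]
Schoenberg's theorem for cnd kernels gives that $e^{-t\psi}$ is positive definite with diagonal identically $1$, hence of Schur multiplier norm $1$. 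For the second factor, writing $\theta(x,y) = \la \eta(x), \eta(y)\ra$ with $\|\eta(x)\|^2 = \phi(e)/2$, the normalized exponential vectors $e^{-t\phi(e)/4}\mr{Exp}(\mp \sqrt{t}\,\eta(x))$ in the symmetric Fock space are unit vectors with inner product exactly $e^{-t\phi(e)/2}e^{-t\theta(x,y)}$. Multiplicativity of the Schur norm under pointwise products then yields $\|e^{-t\phi}\|_{B_2} \le 1$.

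For $(\Rightarrow)$, I extract $\theta$ and $\psi$ from Gilbert factorizations and pass to a limit. For each $t > 0$, Gilbert's theorem gives a Hilbert space $\mc H_t$ and maps $P_t, Q_t : G \to \mc H_t$ with $\|P_t(x)\|, \|Q_t(y)\| \le 1$ and $e^{-t\phi(y^{-1}x)} = \la P_t(x), Q_t(y)\ra$. By padding with two scalar coordinates (working inside $\mc H_t \oplus \R^2$), I may assume $\|P_t(x)\| = \|Q_t(x)\| = 1$ for all $x$ without altering the inner products. The candidate pd kernel is the scaled Gram kernel
\[
\theta_t(x,y) := \tfrac{1}{4t}\la P_t(x) - Q_t(x),\ P_t(y) - Q_t(y)\ra,
\]
which has diagonal $\theta_t(x,x) = \tfrac{1-e^{-t\phi(e)}}{2t} \to \phi(e)/2$, so by Cauchy--Schwarz $\{\theta_t\}$ is uniformly bounded. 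Fixing a free ultrafilter $\omega$ on $(0,\infty)$ with $\lim_\omega t = 0$ and setting $\theta(x,y) := \lim_\omega \theta_t(x,y)$ produces a bounded positive definite kernel with constant diagonal $\phi(e)/2$ (in particular $\theta(e,e) = \phi(e)/2$). The candidate cnd kernel is then forced: $\psi(x,y) := \phi(y^{-1}x) - \theta(x,y) - \phi(e)/2$, which vanishes on the diagonal by construction.

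It remains to verify that $\psi$ is conditionally negative definite. Given $c_1,\dots,c_n \in \R$ with $\sum_i c_i = 0$, put $U_t := \sum_i c_i P_t(x_i)$ and $W_t := \sum_i c_i Q_t(x_i)$. Using $\sum c_i = 0$ and Taylor-expanding at $t = 0$ gives
\[
\sum_{i,j} c_i c_j \phi(x_j^{-1}x_i) = -\lim_\omega \tfrac{1}{t}\la U_t, W_t\ra, \qquad \sum_{i,j}c_ic_j\theta(x_i,x_j) = \lim_\omega \tfrac{1}{4t}\|U_t - W_t\|^2.
\]
The real polarization identity $\la U_t, W_t\ra + \tfrac{1}{4}\|U_t - W_t\|^2 = \tfrac{1}{4}\|U_t + W_t\|^2$ (applicable because $\la U_t, W_t\ra = \sum c_i c_j e^{-t\phi(x_j^{-1}x_i)}$ is real) then yields
\[
\sum_{i,j} c_i c_j \psi(x_i,x_j) = -\lim_\omega \tfrac{1}{4t}\|U_t + W_t\|^2 \le 0,
\]
so $\psi$ is cnd. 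The main conceptual step is identifying the scaled Gram kernel of $P_t - Q_t$ as the correct candidate for $\theta$; once that choice is made, the cnd property of $\psi$ drops out of the polarization identity applied to the Gilbert vectors, and the rest is essentially bookkeeping on norms and ultralimits.
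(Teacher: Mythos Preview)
Your proof is correct and follows essentially the same strategy as the paper: for $(\Rightarrow)$ both arguments use a Gilbert factorization $e^{-t\phi(y^{-1}x)}=\langle P_t(x),Q_t(y)\rangle$ with unit vectors, isolate the positive definite piece as (a rescaling of) the Gram kernel of $P_t-Q_t$, recover the conditionally negative definite piece via the parallelogram/polarization identity, and pass to $t\to 0$ through an ultralimit---the paper takes an ultraproduct of the Hilbert spaces and then applies the parallelogram law to the limit vectors $R=(P+Q)/2$, $S=(P-Q)/2$, whereas you take scalar ultralimits of the kernels and verify the cnd inequality directly, which is a purely cosmetic difference. The only genuine variation is in the easy direction $(\Leftarrow)$: you factor $e^{-t\phi}$ as a pointwise product and realize the second factor via exponential vectors in Fock space, while the paper instead builds a single cnd kernel on the doubled set $G\sqcup\bar G$, applies Schoenberg there, and reads off $\|e^{-t\phi}\|_S\le 1$ from the ``corner of a positive matrix'' characterization of the Schur unit ball (their Lemma~4.2); both routes are short and standard.
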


\noindent
The downside of the above theorem is that the functions $\psi$ and $\theta$ are defined on $G\times G$ instead of simply $G$. A natural question to ask is in which situations we may strengthen Theorem~\ref{thm:A} to produce functions $\psi$ and $\theta$ defined on the group $G$ itself. It is not so hard to prove that this happens if $G$ is amenable. Moreover, this actually characterizes amenability. We thus have following theorem.

\begin{thm}\label{thm:amenable-case}
Let $G$ be a countable, discrete group. Then $G$ is amenable if and only if the following condition holds. Whenever $\phi:G\to\R$ is a symmetric function such that $\|e^{-t\phi}\|_\HS \leq 1$ for every $t>0$, then $\phi$ splits as
$$
\phi(x) = \psi(x) + ||\xi||^2 + \la\pi(x)\xi,\xi\ra  \qquad (x\in G)
$$
where
\begin{itemize}
\renewcommand{\labelitemi}{$\cdot$}
	\item $\psi$ is a conditionally negative definite function on $G$ with $\psi(e) = 0$,
	\item $\pi$ is an orthogonal representation of $G$ on some real Hilbert space $H$,
	\item and $\xi$ is a vector in $H$.
\end{itemize}

\end{thm}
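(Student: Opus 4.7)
\medskip
\noindent\emph{Proof plan.} For the ``only if'' direction, assume $G$ is amenable and $\phi:G\to\R$ is symmetric with $\|e^{-t\phi}\|_\HS\le 1$ for all $t>0$. Theorem~\ref{thm:A} gives a decomposition
\[
\phi(y^{-1}x)=\psi(x,y)+\theta(x,y)+\theta(e,e),
\]
with $\psi$ a (WLOG real) CND kernel vanishing on the diagonal and $\theta$ a bounded real pd kernel. The plan is to average this identity against a two-sided invariant mean $m$ on $\ell^\infty(G)$ under the substitution $(x,y)\mapsto(zx,zy)$, which leaves $\phi(y^{-1}x)$ fixed. Set $\bar\theta(x,y):=m_z\theta(zx,zy)$ (well-defined since $\theta$ is bounded) and $\bar\psi(x,y):=\phi(y^{-1}x)-\theta(e,e)-\bar\theta(x,y)$, which coincides with $m_z\psi(zx,zy)$ because, by the decomposition, $\psi(zx,zy)$ differs from a $z$-constant by the bounded term $-\theta(zx,zy)$. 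Left invariance of the averages forces $\bar\theta(x,y)=h(y^{-1}x)$ and $\bar\psi(x,y)=\psi_0(y^{-1}x)$ for real functions $h,\psi_0$ on $G$; positivity and linearity of $m$ show that $h$ is pd and $\psi_0$ is CND (in the latter case using that for $\sum c_i=0$, the sums $\sum c_ic_j\psi(zx_j,zx_i)$ are bounded in $z$ after the $\phi$-contribution cancels, and nonpositive for each $z$). The diagonal vanishing of $\psi$ gives $\psi_0(e)=0$.

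To fix constants, compare the identity of Theorem~\ref{thm:A} at $(e,e)$, which gives $\phi(e)=2\theta(e,e)$, with the averaged identity $\phi(e)=h(e)+\theta(e,e)$ to obtain $h(e)=\theta(e,e)$. Applying the real GNS construction to the symmetric real pd function $h$ produces an orthogonal representation $\pi$ of $G$ on a real Hilbert space $H$ and a vector $\xi\in H$ with $h(g)=\la\pi(g)\xi,\xi\ra$ and $\|\xi\|^2=h(e)=\theta(e,e)$; the resulting splitting
\[
\phi(g)=\psi_0(g)+\|\xi\|^2+\la\pi(g)\xi,\xi\ra
\]
is the one claimed.

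The ``if'' direction is the main obstacle and I would argue it by contrapositive: given a non-amenable $G$, exhibit a symmetric $\phi$ satisfying $\|e^{-t\phi}\|_\HS\le 1$ for all $t>0$ that admits no splitting of the stated form. The idea is to start from a bounded positive definite kernel $\theta$ on $G$ that is \emph{genuinely} a kernel, i.e.\ is not of the form $(x,y)\mapsto h(y^{-1}x)$ even after a bounded CND correction; such a $\theta$ should exist precisely because the invariant-mean averaging used in the forward direction is unavailable. Pairing $\theta$ with a CND kernel $\psi$ vanishing on the diagonal to satisfy Theorem~\ref{thm:A} defines $\phi$, and any hypothetical function splitting of $\phi$ would, upon matching terms, realize $\theta$ (up to a bounded CND correction) as a difference kernel coming from the representation $\pi$ and the vector $\xi$, from which an invariant mean on $\ell^\infty(G)$ could be reconstructed via a formula of the shape $m(f)=\la\rho(f)\xi,\xi\ra/\|\xi\|^2$, contradicting non-amenability. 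The hardest step is to make the non-averageable $\theta$ explicit --- a boundary-type or horocyclic pd kernel on the Cayley graph of $G$ is the natural candidate --- and to convert the failure of averaging into a clean linear-functional argument.
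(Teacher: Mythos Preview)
Your ``only if'' direction is essentially the paper's argument: start from the kernel decomposition of Theorem~\ref{thm:A} (equivalently Proposition~\ref{prop:gen}), average against an invariant mean to turn the kernels into functions on $G$, and read off the GNS data. The bookkeeping with $h(e)=\theta(e,e)$ is correct.

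Your ``if'' direction, however, is a genuine gap. The plan to build a specific non-averageable $\theta$ and then reconstruct an invariant mean from a hypothetical splitting is vague, and the step where a functional ``of the shape $m(f)=\la\rho(f)\xi,\xi\ra/\|\xi\|^2$'' would emerge is not justified: nothing in the splitting hypothesis hands you a representation $\rho$ of $\ell^\infty(G)$, only an orthogonal representation of $G$. Making this work would essentially require reproving Bo\.zejko's theorem from scratch, and you have not indicated how.

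The paper's route is completely different and much cleaner. Rather than building a counterexample $\phi$, it shows that the splitting hypothesis forces $B_2(G)_\sym = B(G)_\sym$, and then invokes Bo\.zejko's characterization (Proposition~\ref{lem:bozejko}) that this equality implies amenability. Concretely: take any real symmetric $\rho\in B_2(G)$ with $\|\rho\|_{B_2}=1$ and set $\phi=1-\rho$; then $\|e^{-t\phi}\|_{B_2}=e^{-t}\|e^{t\rho}\|_{B_2}\le 1$, so the splitting hypothesis applies. Since $\phi$ is bounded, the conditionally negative definite piece $\psi$ is bounded, hence of the form $c-\phi'$ with $\phi'$ positive definite; thus $\phi\in B(G)$ and $\rho\in B(G)$. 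This places all the hard work in Bo\.zejko's theorem (cotype~2 of $C^*(G)^*$, Littlewood functions, and a spectral-radius argument), which the paper proves separately.
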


\noindent
Note that the function $x\mapsto \la \pi(x)\xi,\xi\ra$ is positive definite, and every positive definite function has this form.

We solve Problem~\ref{prob:2} in the special case where $G$ is a free group and the function $\phi$ is radial. The result is the following theorem, which generalizes Corollary 5.5 from \cite{HK1}.

\begin{thm}\label{thm:B}
Let $\F_n$ be the free group on $n$ generators ($2\leq n \leq \infty$), and let $\phi:\F_n\to\R$ be a radial function, i.e., $\phi(x)$ depends only on the word length $|x|$. If $\|e^{-t\phi}\|_{B_2} \leq 1$ for every $t>0$, then there are constants $a,b \geq 0$ such that
$$
\phi(x) \leq b + a|x| \qquad\text{for all } x\in\F_n.
$$
\end{thm}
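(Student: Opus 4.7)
My plan has three parts. First, since $\|\cdot\|_{B_2}$ dominates the supremum norm, the hypothesis $\|e^{-t\phi}\|_{B_2} \le 1$ forces $e^{-t\phi(x)} \le 1$ for every $x \in \F_n$, hence $\phi \ge 0$ pointwise and in particular $\phi(e) \ge 0$.

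Second, in the case $\phi(e) = 0$ I would reduce directly to the conditionally negative definite version of the statement, namely Corollary~5.5 of~\cite{HK1}, which Theorem~\ref{thm:B} is advertised to generalize. Indeed, $(e^{-t\phi})(e) = 1$ combined with $\|e^{-t\phi}\|_{B_2} \le 1$ forces $\|e^{-t\phi}\|_{B_2} = 1$, so the Herz--Schur multiplier on $\Cs_r(\F_n)$ associated with $e^{-t\phi}$ is a unital cb contraction. A unital cb map of cb-norm one on a unital $\Cs$-algebra is automatically unital completely positive; consequently this multiplier is UCP and $e^{-t\phi}$ is positive definite for every $t>0$. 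Since $\phi$ is symmetric (radial functions are), Schoenberg's theorem gives that $\phi$ is conditionally negative definite, and Corollary~5.5 of~\cite{HK1} delivers the desired linear bound.

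Third, when $\phi(e) > 0$ the UCP reduction fails, since normalizing $e^{-t\phi}$ by $(e^{-t\phi})(e) = e^{-t\phi(e)}$ inflates the cb-norm to possibly exceed one. Here I would combine Theorem~\ref{thm:A} with the tree structure of $\F_n$. Theorem~\ref{thm:A} gives $\phi(y^{-1}x) = \|\eta(x) - \eta(y)\|^2 + \la \xi(y), \xi(x)\ra + \phi(e)/2$ with $\|\xi(x)\|^2 = \phi(e)/2$ uniform in $x$, so the problem reduces to a linear upper bound on $\|\eta(x) - \eta(e)\|^2$. Along the unique geodesic $e = x_0, \ldots, x_k = x$ in the Cayley tree, each increment $\eta(x_i) - \eta(x_{i-1})$ has bounded norm, and polarization plus a telescoping argument expresses the cross terms $\la \eta(x_i) - \eta(x_{i-1}), \eta(x_j) - \eta(x_{j-1})\ra$ via second differences of $f$ (where $\phi(x) = f(|x|)$) up to uniformly bounded $\theta$-contributions. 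To extract the final linear bound I would then invoke the classical characterization of radial Herz--Schur multipliers on $\F_n$ applied to the whole semigroup $(e^{-t\phi})_{t > 0}$, passing to the asymptotic limit $t \to 0^+$. The principal obstacle is precisely this last step: carrying out the Hankel-type analysis of the radial characterization uniformly in $t$ and converting its $t \to 0^+$ asymptotics into a genuine linear upper bound on $f$, rather than the $O(k^2)$ bound that a naive Cauchy--Schwarz estimate along geodesics would produce.
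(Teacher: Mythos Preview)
Your Part~2 is correct and is a genuine shortcut not present in the paper: when $\phi(e)=0$ the Schur multiplier by $\hat\phi_t(x,y)=e^{-t\phi(y^{-1}x)}$ is unital with cb-norm at most~$1$, hence completely positive, so $e^{-t\phi}$ is positive definite for every $t>0$; Schoenberg then makes $\phi$ conditionally negative definite and \cite[Corollary~5.5]{HK1} gives the linear bound. The paper does not isolate this case and instead runs the full Toeplitz-algebra machinery uniformly.

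The gap is Part~3, and you have diagnosed it accurately. The decomposition from Theorem~\ref{thm:A} (equivalently Proposition~\ref{prop:gen}) gives $\phi(y^{-1}x)=\|R(x)-R(y)\|^2+\|S(x)+S(y)\|^2$ with the second summand bounded by $\phi(e)$, so everything rests on a linear bound for $\|R(x)-R(e)\|^2$. Along a geodesic $e=x_0,\dots,x_k=x$ the cross terms $\la R(x_i)-R(x_{i-1}),R(x_j)-R(x_{j-1})\ra$ expand via polarization into second differences of $\dot\phi$ \emph{plus} $S$-terms of the form $\|S(x_p)+S(x_q)\|^2$; the latter are bounded but there are $O(k^2)$ of them and nothing forces cancellation, since $S$ need not be radial or have any relation to the tree structure. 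So the geodesic argument is stuck at $O(k^2)$, and invoking the radial multiplier characterization ``uniformly in $t$'' is precisely the problem to be solved, not a step one can gesture at.

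The paper's proof does not pass through Theorem~\ref{thm:A} at all. It uses the Haagerup--Steenstrup--Szwarc formula to identify $\|e^{-t\phi}\|_{B_2}$ with the norm of a functional $\omega_{e^{-t\tilde\phi}}$ on the Toeplitz algebra (Proposition~\ref{prop:norms}), then shows (Theorem~\ref{thm:split}) that $\|\omega_{e^{-t\tilde\phi}}\|\le 1$ for all $t>0$ is equivalent to a splitting $\tilde\phi=\psi+\theta$ \emph{as kernels on $\N_0$}, where $\psi\circ\sigma-\psi$ and $\theta-\theta\circ\sigma$ are both positive definite and the latter is trace class. The linear bound (Proposition~\ref{prop:bound}) is then a bootstrapping argument specific to this structure: writing $h=\tilde\phi\circ\sigma-\tilde\phi=h_1-h_2$ with $h_1,h_2$ positive definite and $h_2$ trace class, Cauchy--Schwarz on the positive kernel $(m,n)\mapsto \dot h(m+n)+\|h_2\|_1\delta_{mn}$ yields $|\dot h(k)|^2\le e\,(\dot h(2k)+d)$, and iterating this doubling inequality against the a~priori polynomial bound on $h_1$ forces $\dot h$ to be bounded, hence $\dot\phi$ linear. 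This Hankel/trace-class input has no counterpart in the general kernel decomposition of Theorem~\ref{thm:A}, which is why your Part~3 cannot be completed along the lines you sketch.
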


The paper is organized as follows. In Section~\ref{sec:prelim} we introduce many of the relevant notions needed in the rest of the paper. Section~\ref{sec:wh-char} contains the proof of Theorem~\ref{lem:wh}, and Section~\ref{sec:A} contains the proof of Theorem~\ref{thm:A}. Section~\ref{sec:amenable} considers the case of amenable groups. Here we prove Theorem~\ref{thm:amenable-case}.

The proof of Theorem~\ref{thm:B} concerning $\F_n$ takes up the second half of the paper. The proof is divided into two cases depending on whether $n$ is finite or infinite. In Section~\ref{sec:infinite} we deal with the infinite case, and Section~\ref{sec:finite} contains the finite case. The strategy of the proof is to compare the Herz-Schur norm of $e^{-t\phi}$ with the norm of certain functionals on the Toeplitz algebra. This is accomplished in Propositions~\ref{prop:HSS28} and~\ref{prop:norms}. It turns out that a certain norm bound on the functionals produces a splitting of our function $\phi$ into a positive definite and a conditionally negative definite part (Theorem~\ref{thm:split}). Characterizing the positive and conditionally negative parts (Corollary~\ref{cor:4} and Proposition~\ref{prop:thm2}) then leads to Theorem~\ref{thm:B} in the case of $\F_\infty$.

When $n < \infty$, Theorem~\ref{thm:B} is deduced in basically the same way as the case $n = \infty$, but the details are more complicated. The transformations introduced in Section~\ref{sec:FG} allow us to reduce many of the arguments for $\F_n$ with $n$ finite to the case of $\F_\infty$.

\section{Preliminaries}\label{sec:prelim}

Let $X$ be a non-empty set. A \emph{kernel} on $X$ is a function $k:X\times X\to\C$. The kernel is called \emph{symmetric} if $k(x,y) = k(y,x)$ for all $x,y\in X$, and \emph{hermitian} if $k(y,x) = \bar{k(x,y)}$.

The kernel $k$ is said to be \emph{positive definite}, if
$$
\sum_{i,j=0}^n c_i \bar{c_j}k(x_i,x_j) \geq 0
$$
for all $n\in\N$, $x_1,\ldots,x_n\in X$ and $c_1,\ldots,c_n\in\C$. It is called \emph{conditionally negative definite} if it is hermitian and
$$
\sum_{i,j=0}^n c_i \bar{c_j}k(x_i,x_j) \leq 0
$$
for all $n\in\N$, $x_1,\ldots,x_n\in X$ and $c_1,\ldots,c_n\in\C$ such that $\sum_{i=0}^n c_i = 0$.

Recall Schoenberg's Theorem which asserts that $k$ is conditionally negative definite if and only if $e^{-tk}$ is positive definite for all $t>0$.

Let $H$ be a Hilbert space, and let $a:X\to H$ be a map. Then the kernel $\phi:X\times X\to\C$ defined by
$$
\phi(x,y) = \la a(x),a(y)\ra
$$
is positive definite. Conversely, every positive definite kernel is of this form for some suitable Hilbert space $H$ and map $a$. On the other hand, the kernel $\psi:X\times X\to\C$ defined by
$$
\psi(x,y) = ||a(x) - a(y)||^2
$$
is conditionally negative definite, and every real-valued, conditionally negative definite kernel that vanishes on the diagonal $\{(x,x) \mid x\in X\}$ is of this form.

It is well-known that the set of positive definite kernels on $X$ is closed under pointwise products and pointwise convergence. Also, the set of conditionally negative definite kernels is closed under adding constants and under pointwise convergence. We refer to \cite[Chapter~3]{BCR} for details.

A kernel $k:X\times X\to\C$ is called a \emph{Schur multiplier} if for every operator $A = [a_{xy}]_{x,y\in X} \in B(\ell^2(X))$ the matrix $[k(x,y)a_{xy}]_{x,y\in X}$ represents an operator in $B(\ell^2(X))$, denoted $m_k(A)$. If $k$ is a Schur multiplier, it is a consequence of the closed graph theorem that $m_k$ defines a \emph{bounded} operator on $B(\ell^2(X))$. We define the \emph{Schur norm} $\|k\|_S$ to be $\|m_k\|$. The following characterization of Schur multipliers is well-known (see \cite[Appendix D]{BO}).

\begin{prop}\label{prop:Gilbert}
Let $k:X\times X\to\C$ be a kernel, and let $C\geq 0$ be given. The following are equivalent.
\begin{enumerate}
	\item The kernel $k$ is a Schur multiplier with $\|k\|_S \leq C$.
	\item There exist a Hilbert space $H$ and two bounded maps $a,b:X\to H$ such that
	$$
	k(x,y) = \la a(x),b(y)\ra, \qquad \text{for all }x,y\in X,
	$$
	and $\|a(x)\|\|b(y)\| \leq C$ for all $x,y\in X$.
\end{enumerate}
\end{prop}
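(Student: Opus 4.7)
I would prove the two implications separately; (2)$\Rightarrow$(1) is a quick direct factorization, while (1)$\Rightarrow$(2) requires the Wittstock/Paulsen dilation theorem for completely bounded maps and contains the main content.

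For (2)$\Rightarrow$(1), assume $k(x,y) = \la a(x), b(y)\ra$ with $\|a(x)\|\|b(y)\| \leq C$. Define $V, W: \ell^2(X) \to \ell^2(X)\tensor H$ by $V\delta_x = \delta_x\tensor a(x)$ and $W\delta_y = \delta_y\tensor b(y)$; their operator norms satisfy $\|V\|\|W\| = (\sup_x\|a(x)\|)(\sup_y\|b(y)\|) \leq C$. A direct matrix-entry computation shows, after a small adjustment to accommodate the sesquilinearity of the inner product,
\[
m_k(A) = W^*(A\tensor 1_H)V \qquad\text{for every }A\in B(\ell^2(X)),
\]
so $\|m_k(A)\| \leq \|W\|\|A\|\|V\| \leq C\|A\|$ is immediate.

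For (1)$\Rightarrow$(2), the first step is to upgrade the norm bound on $m_k$ to a completely bounded norm bound: the ampliation $m_k \tensor \id_{M_n}$, regarded as a map on $B(\ell^2(X\times\{1,\dots,n\}))$, is itself Schur multiplication by the blown-up kernel $\tilde k((x,i),(y,j)) = k(x,y)$, and a standard argument (essentially because $m_k$ is $\ell^\infty(X)$-bimodular) gives $\|\tilde k\|_S = \|k\|_S$. Hence $\|m_k\|_{\cb} = \|k\|_S \leq C$, and Wittstock's factorization theorem supplies a Hilbert space $K$, a $*$-representation $\pi: B(\ell^2(X)) \to B(K)$, and operators $V, W: \ell^2(X) \to K$ with $\|V\|\|W\| \leq C$ such that $m_k(A) = V^*\pi(A)W$ for all $A$.

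The main obstacle is then extracting the explicit form of (2) from this abstract dilation. I would fix a basepoint $x_0 \in X$ and set $u_x := \pi(e_{x x_0})$, where $e_{xy}$ denotes the matrix units in $B(\ell^2(X))$. The identities $u_x^* u_x = \pi(e_{x_0 x_0})$ and $u_x u_y^* = \pi(e_{xy})$ are immediate from $\pi$ being a $*$-homomorphism. Applying the dilation to $e_{xy}$ and using $m_k(e_{xy}) = k(x,y)e_{xy}$ yields
\[
k(x,y) = \la \pi(e_{xy})W\delta_y, V\delta_x\ra = \la u_y^* W\delta_y, u_x^* V\delta_x\ra,
\]
so that setting $a(x) = u_x^* V\delta_x$ and $b(y) = u_y^* W\delta_y$ in the subspace $\pi(e_{x_0 x_0})K$ (passing to its conjugate Hilbert space if one insists on the precise convention $k(x,y) = \la a(x), b(y)\ra$ rather than the conjugate) yields the required pointwise bound $\|a(x)\|\|b(y)\| \leq \|V\|\|W\| \leq C$. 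The only delicate points are the sesquilinearity bookkeeping and confirming the norm bound via the partial isometries $u_x$.
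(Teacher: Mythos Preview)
The paper does not supply a proof of this proposition; it is stated as well-known with a reference to \cite[Appendix~D]{BO}. Your outline is correct and follows the standard route taken there: the factorization $m_k(A)=W^*(A\otimes 1_H)V$ for (2)$\Rightarrow$(1), and for (1)$\Rightarrow$(2) the automatic complete boundedness of Schur multipliers (via $\ell^\infty(X)$-bimodularity) followed by Wittstock's dilation and extraction of $a,b$ using the partial isometries $u_x=\pi(e_{xx_0})$. The conjugation bookkeeping you flag is indeed the only nuisance, and your norm bound $\|u_x^*V\delta_x\|\le\|V\|$ goes through simply because $\|u_x\|\le 1$.
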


Let $G$ be a discrete group, and let $\phi:G\to\C$ be a function. Let $\hat\phi:G\times G\to\C$ be defined by $\hat\phi(x,y) = \phi(y^{-1}x)$. All the terminology introduced above is inherited to functions $\phi:G\to\C$ by saying, for instance, that $\phi$ is positive definite if the kernel $\hat\phi$ is positive definite. The only exception is that a function $\phi:G\to\C$ is called a \emph{Herz-Schur multiplier} if $\hat\phi$ is a Schur multiplier.

All positive definite functions on $G$ are of the form $\phi(x) = \la \pi(x)\xi,\xi\ra$ for a unitary representation $\pi$ on some Hilbert space $H$ and a vector $\xi\in H$. If $\phi$ is real, then $\pi$ may be taken as an orthogonal representation on a real Hilbert space.

The set of Herz-Schur multipliers on $G$ is denoted $B_2(G)$. It is a Banach space, in fact a Banach algebra, when equipped with the norm $\|\phi\|_{B_2} = \|\hat\phi\|_S = \| m_{\hat\phi}\|$. The unit ball $B_2(G)_1$ is closed in the topology of pointwise convergence. It was proved in \cite{MR753889} that the space of Herz-Schur multipliers coincides isometrically with the space of completely bounded Fourier multipliers.

Another useful algebra of functions on $G$ is the {Fourier-Stieltjes algebra}, denoted $B(G)$. It may be defined as the linear span of the positive definite functions on $G$. It is isometrically isomorphic to the dual of the full group \Cs-algebra of $G$, i.e., $B(G) \simeq C^*(G)^*$. Since any positive definite function is a Herz-Schur multiplier, it follows that $B(G) \subseteq B_2(G)$. Equality holds, if and only if $G$ is amenable (see \cite{MR806070} or Proposition~\ref{lem:bozejko} below).

Given \Cs-algebras $A$ and $B$ and a linear map $\phi:A\to B$ we denote by $\phi^{(n)}$ the map $\phi^{(n)} = \phi\otimes \id_n : A\otimes M_n(\C) \to B\otimes M_n(\C)$, where $\id_n: M_n(\C)\to M_n(\C)$ is the identity. The map $\phi$ is called \emph{completely bounded}, if
$$
\|\phi\|_\cb = \sup_n \| \phi^{(n)} \| < \infty.
$$
We say that $\phi$ is \emph{completely positive}, if each $\phi^{(n)}$ is positive between the \Cs-algebras $M_n(A)$ and $M_n(B)$. We abbreviate unital, completely positive as u.c.p. It is well-known that bounded functionals $\phi:A\to\C$ are completely bounded with $\|\phi\|_\cb = \|\phi\|$. States on \Cs-algebras as well as $^*$-homomorphism are completely positive.

\section{Characterization of the weak Haagerup property with constant 1}\label{sec:wh-char}

The following theorem gives the promised alternative characterization of the weak Haagerup property with constant 1.

\begin{thm}\label{lem:WH}
Let $G$ be a countable, discrete group. The following are equivalent.
\begin{enumerate}
	\item  There is a sequence $(\phi_n)$ of functions vanishing at infinity such that $\phi_n\to 1$ pointwise and $\|\phi_n\|_{B_2} \leq 1$ for all $n$.
	\item There is $\phi:G \to \R$ such that $\phi$ is proper and $|| e^{-t\phi}||_{B_2} \leq 1$ for every $t>0$.
\end{enumerate}
\end{thm}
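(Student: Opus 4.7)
The plan is to prove each implication separately, modeled on the classical proof that $G$ has the Haagerup property iff it admits a proper conditionally negative definite function (cf.\ \cite[Theorem 2.1.1]{MR1852148}), with the conditionally negative definite function replaced by the generator $\phi$ and positive definiteness replaced by the Herz-Schur norm bound.

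The implication $(2)\Rightarrow(1)$ is routine: set $\phi_n = e^{-\phi/n}$. Each $\phi_n$ lies in the unit ball of $B_2(G)$ by assumption, and $\phi_n \to 1$ pointwise since $\phi(g)/n \to 0$. To see $\phi_n$ vanishes at infinity, evaluating any Schur multiplier on matrix units yields the general inequality $\|\cdot\|_\infty \leq \|\cdot\|_{B_2}$; this forces $e^{-t\phi(g)} \leq 1$, so $\phi \geq 0$, and properness then gives $\phi(g) \to \infty$ as $g\to\infty$.

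For the nontrivial direction $(1)\Rightarrow(2)$, I first reduce to the case where each $\phi_n$ is real-valued and symmetric: complex conjugation and group inversion both act as isometries of $B_2(G)$ and preserve both vanishing at infinity and pointwise convergence to $1$, so averaging $\phi_n$ with $\bar{\phi_n}$ and with $g\mapsto \phi_n(g^{-1})$ does no harm. Enumerating $G = \{g_0, g_1, \ldots\}$ and using pointwise convergence, I pass to a subsequence (still denoted $\phi_n$) satisfying $|1 - \phi_n(g_k)| \leq 2^{-n}$ whenever $k \leq n$. The sets $F_n = \{g : |\phi_n(g)| \geq 1/2\}$ are finite since each $\phi_n$ vanishes at infinity. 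I then define
\[
\phi(g) = \sum_{n=1}^\infty \frac{1}{n}\bigl(1 - \phi_n(g)\bigr).
\]
Because $-1 \leq \phi_n \leq 1$, every term is nonnegative, and the tail at $g_k$ is dominated by $\sum_{n\geq k}\tfrac{1}{n}2^{-n}<\infty$, so $\phi\colon G\to[0,\infty)$ is well-defined.

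Properness of $\phi$ follows from divergence of $\sum\tfrac{1}{n}$: given $M>0$, choose $N$ with $\tfrac{1}{2}\sum_{n=1}^N\tfrac{1}{n}>M$; any $g\notin F_1\cup\cdots\cup F_N$ satisfies $1 - \phi_n(g) > 1/2$ for each $n\leq N$, with the remaining terms nonnegative, so $\phi(g)>M$. The heart of the proof is the semigroup norm bound $\|e^{-t\phi}\|_{B_2}\leq 1$. Writing $\lambda_n = 1/n$, the Banach algebra structure of $B_2(G)$ combined with $\|\phi_n\|_{B_2}\leq 1$ yields, via the power series expansion,
\[
\|e^{-t\lambda_n(1-\phi_n)}\|_{B_2} \leq e^{-t\lambda_n}\sum_{k\geq 0}\frac{(t\lambda_n)^k}{k!} = 1,
\]
and by submultiplicativity the partial products $\prod_{n=1}^N e^{-t\lambda_n(1-\phi_n)}$ satisfy the same bound. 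These products converge pointwise to $e^{-t\phi}$, and the unit ball of $B_2(G)$ is closed in the topology of pointwise convergence, so $\|e^{-t\phi}\|_{B_2}\leq 1$. The main obstacle I expect is the simultaneous calibration of the subsequence and the weights $\lambda_n$: the weights must diverge (for properness) yet remain summable against the $2^{-n}$ decay rate (so that $\phi$ is everywhere finite and the factorization trick applies); the choice $\lambda_n = 1/n$ strikes this balance cleanly.
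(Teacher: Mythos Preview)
Your proof is correct and follows essentially the same approach as the paper's: build $\phi = \sum_n \lambda_n(1-\phi_n)$ with carefully chosen weights, establish properness from the divergence of the weight series (the paper instead uses an increasing unbounded weight sequence $\alpha_n$, but the mechanism is the same), and obtain $\|e^{-t\phi}\|_{B_2}\leq 1$ via the factorization $e^{-t\phi}=\prod e^{-t\lambda_n(1-\phi_n)}$ and pointwise closedness of the unit ball. The only cosmetic differences are that the paper normalizes by replacing $\phi_n$ with $|\phi_n|^2$ (yielding $0\leq\phi_n\leq 1$) rather than averaging to make $\phi_n$ real, and uses generic sequences $(\alpha_n),(\epsilon_n)$ with $\sum\alpha_n\epsilon_n<\infty$ in place of your concrete choices $\lambda_n=1/n$, $\epsilon_n=2^{-n}$.
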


\begin{proof}
\mbox{}\newline
(2) $\implies$ (1): This is trivial: put $\phi_n = e^{-\phi/n}$.

\noindent
(1) $\implies$ (2): Choose an increasing, unbounded sequence $(\alpha_n)$ of positive real numbers and a decreasing sequence $(\epsilon_n)$ tending to zero such that $\sum_n \alpha_n\epsilon_n$ converges. We enumerate the elements in $G$ as $G = \{g_1,g_2,\ldots\}$. For each $n$ we may choose a function $\phi_n$ in $C_0(G)$ with $\|\phi_n\|_{B_2}\leq 1$ such that
$$
\max \{ |1- \phi_n(g_i) | \mid i=1,\ldots,n \} \leq \epsilon_n .
$$

We may replace $\phi_n$ by $|\phi_n|^2$ to ensure that $0 \leq \phi_n \leq 1$. Now, let $\phi: G\to\R_+$ be given by
$$
\phi(g) = \sum_{n=1}^\infty \alpha_n (1-\phi_n(g)).
$$
Note that this sum converges. We claim that $\phi$ is also proper. Let $R > 0$ be given, and choose $k$ such that $\alpha_k \geq 2R$. Since $\phi_k \in C_0(G)$, there is a finite set $F \subseteq G$ such that $|\phi_k(g)| < 1/2$ whenever $g\in G\setminus F$. Now if $\phi(g) \leq R$, then $\phi(g) \leq \alpha_k / 2$, and in particular $\alpha_k (1-\phi_k(g)) \leq \alpha_k/2$, which implies that $1-\phi_k(g) \leq 1/2$. Hence, we have argued that
$$
\{g\in G \mid \phi(g) \leq R \} \subseteq \{g\in G \mid 1-\phi_k(g) \leq 1/2 \}\subseteq F.
$$
This proves that $\phi$ is proper.

Now let $t>0$ be fixed. We need to show that $\|e^{-t\phi}\|_{B_2} \leq 1$. Define
$$
\psi_i = \sum_{n=1}^i \alpha_n (1-\phi_n).
$$
Since $\psi_i$ converges pointwise to $\phi$, it will suffice to prove that $\|e^{-t\psi_i}\|_{B_2} \leq 1$ eventually (as $i\to\infty$), because the unit ball of $B_2(G)$ is closed under pointwise limits. Observe that
$$
e^{-t\psi_i} = \prod_{n=1}^i e^{-t \alpha_n (1-\phi_n) },
$$
and so it suffices to show that $e^{-t \alpha_n (1-\phi_n) }$ belongs to the unit ball of $B_2(G)$ for each $n$. And this is clear, since
$$
\| e^{-t \alpha_n (1-\phi_n) } \|_{B_2} = e^{-t \alpha_n} \| e^{t \alpha_n \phi_n } \|_{B_2} \leq e^{-t \alpha_n}  e^{t \alpha_n \| \phi_n\|_{B_2} } \leq 1.
$$
\end{proof}

\section{Splitting a semigroup generator into positive and negative parts}\label{sec:A}

The key idea in the proof of Theorem~\ref{thm:A} is that a Schur multiplier is a corner in a positive definite matrix (Lemma~\ref{lem:Gilb}). Together with an ultraproduct argument this will give the proof of Theorem~\ref{thm:A}.

We consider the following as well-known.
\begin{lem}\label{lem:sup}
Let $\phi:X\times X\to\C$ be a kernel. Then
$$
||\phi||_S = \sup \{ || \phi|_{F\times F}||_S \mid F\subseteq X \text{ finite}\}.
$$
\end{lem}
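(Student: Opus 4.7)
The plan is to prove both inequalities separately; the non-trivial direction reduces to a weak operator topology (WOT) compactness argument.

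First I would establish the easy inequality $\sup_F \|\phi|_{F\times F}\|_S \leq \|\phi\|_S$. For any finite $F \subseteq X$, operators on $\ell^2(F)$ embed isometrically into $B(\ell^2(X))$ by zero-extension, and for such an operator $A$ one has $m_{\phi|_{F \times F}}(A) = P_F\, m_\phi(A)\, P_F$, where $P_F$ is the orthogonal projection onto $\ell^2(F)$. Since compressions do not increase the norm, this gives the inequality (and if $\phi$ is not a Schur multiplier on $X$, the claim becomes trivial from the right-hand side being $+\infty$ or the statement being vacuous).

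For the reverse, set $C := \sup_F \|\phi|_{F\times F}\|_S$, assuming $C < \infty$. Given arbitrary $A = [a_{xy}] \in B(\ell^2(X))$, I would approximate by the compressions $A_F := P_F A P_F$. For each finite $F$, the matrix $[\phi(x,y)a_{xy}]_{x,y \in F}$ represents a bounded operator $B_F$ on $\ell^2(F)$ with $\|B_F\| \leq C\|A_F\| \leq C\|A\|$. Viewing $B_F$ inside $B(\ell^2(X))$ by zero-extension, we obtain a net $(B_F)$ indexed by the directed set of finite subsets of $X$, lying in the ball of radius $C\|A\|$ in $B(\ell^2(X))$. By Banach--Alaoglu this ball is WOT-compact, so I would extract a WOT-convergent subnet with limit $B \in B(\ell^2(X))$ satisfying $\|B\| \leq C\|A\|$.

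It then remains to identify the entries of $B$. For any fixed $x, y \in X$, once $F \supseteq \{x,y\}$ the matrix coefficient stabilizes: $\langle B_F \delta_y, \delta_x\rangle = \phi(x,y)a_{xy}$. Hence $\langle B \delta_y, \delta_x\rangle = \phi(x,y)a_{xy}$ for all $x,y$, which shows that $[\phi(x,y)a_{xy}]$ represents the bounded operator $B$. This simultaneously proves that $\phi$ is a Schur multiplier, that $m_\phi(A) = B$, and that $\|m_\phi(A)\| \leq C\|A\|$. Taking the supremum over $A$ yields $\|\phi\|_S \leq C$. I do not anticipate any real obstacle; the one subtlety worth emphasizing is that $\phi$ is not assumed a priori to be a Schur multiplier on $X$, and the compactness argument is precisely what promotes the uniform finite bounds to a global Schur multiplier bound.
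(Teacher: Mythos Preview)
The paper does not actually prove this lemma; it introduces it with the phrase ``We consider the following as well-known'' and gives no argument. Your proof is correct and is one of the standard ways to establish the result. The easy direction via compression is exactly right, and for the reverse direction your WOT-compactness argument works as written; in fact, since the matrix coefficients $\langle B_F\delta_y,\delta_x\rangle$ eventually stabilize, the net $(B_F)$ itself converges WOT and no subnet extraction is strictly needed, but what you wrote is certainly sufficient.
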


\noindent
The following follows from Proposition~\ref{prop:Gilbert}.

\begin{lem}\label{lem:Gilb}
Let $a\in M_n(\C)$. The following are equivalent.
\begin{enumerate}
	\item $||a||_S \leq 1$.
	\item There exist $b,c\in M_n(\C)_+$ with $b_{ii} \leq 1$, $c_{ii} \leq 1$, $i=1,\ldots,n$ such that
	$$
	\begin{pmatrix}
	b & a \\
	a^* & c
	\end{pmatrix} \geq 0.
	$$
	\end{enumerate}
\end{lem}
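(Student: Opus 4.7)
The plan is to derive both implications from Proposition~\ref{prop:Gilbert} together with the standard description of a positive semi-definite matrix as the Gram matrix of a family of vectors in a Hilbert space.

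For (2) $\Rightarrow$ (1), I would factor the given positive semi-definite $2n\times 2n$ matrix as $V^*V$, so that the columns of $V$ provide vectors $u_1,\ldots,u_n,v_1,\ldots,v_n$ in some Hilbert space whose pairwise inner products reproduce the entries of the matrix. The bounds $b_{ii},c_{ii}\leq 1$ then translate to $\|u_i\|,\|v_j\|\leq 1$, while the off-diagonal block expresses each $a_{ij}$ as an inner product of $u_i$ and $v_j$. Proposition~\ref{prop:Gilbert} (applied with $X=\{1,\ldots,n\}$) now gives $\|a\|_S\leq 1$.

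For (1) $\Rightarrow$ (2), I would apply Proposition~\ref{prop:Gilbert} in the reverse direction with $C=1$ to produce vectors $\xi_1,\ldots,\xi_n,\eta_1,\ldots,\eta_n$ in a common Hilbert space with $a_{ij}=\langle\xi_i,\eta_j\rangle$ and $\|\xi_i\|\,\|\eta_j\|\leq 1$ for all $i,j$. Set $A=\max_i\|\xi_i\|$ and $B=\max_j\|\eta_j\|$; if $A=0$ or $B=0$ then $a=0$ and one simply takes $b=c=0$. Otherwise $AB\leq 1$, and the rescaling $\xi_i\mapsto\sqrt{B/A}\,\xi_i$, $\eta_j\mapsto\sqrt{A/B}\,\eta_j$ leaves each pairing $\langle\xi_i,\eta_j\rangle$ unchanged while producing uniform bounds $\|\xi_i\|,\|\eta_j\|\leq\sqrt{AB}\leq 1$. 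Defining $b_{ij}=\langle\xi_i,\xi_j\rangle$ and $c_{ij}=\langle\eta_i,\eta_j\rangle$, these are Gram matrices, hence positive semi-definite, with $b_{ii},c_{ii}\leq 1$; moreover the full Gram matrix of the combined family $(\xi_1,\ldots,\xi_n,\eta_1,\ldots,\eta_n)$ is precisely $\begin{pmatrix}b&a\\a^*&c\end{pmatrix}$, which is therefore positive semi-definite.

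The argument is essentially a direct appeal to Proposition~\ref{prop:Gilbert}, so there is no substantial obstacle. The only mild subtlety is the rescaling step in (1) $\Rightarrow$ (2), needed to upgrade the joint bound $\|\xi_i\|\,\|\eta_j\|\leq 1$ furnished by Gilbert's theorem to the separate uniform bounds that are compatible with the diagonal constraints $b_{ii},c_{ii}\leq 1$ required of the corner blocks.
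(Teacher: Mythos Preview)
Your proof is correct and follows essentially the same approach as the paper: both directions are obtained by translating between the Gram-matrix representation of the positive $2n\times 2n$ block matrix and the two-family factorization $a_{ij}=\langle \xi_i,\eta_j\rangle$ supplied by Proposition~\ref{prop:Gilbert}, with the same rescaling $\xi_i\mapsto\sqrt{B/A}\,\xi_i$, $\eta_j\mapsto\sqrt{A/B}\,\eta_j$ to pass from the joint bound $\|\xi_i\|\,\|\eta_j\|\leq 1$ to separate bounds. Your explicit treatment of the degenerate case $A=0$ or $B=0$ is in fact slightly more careful than the paper's version, which tacitly assumes the maxima are nonzero.
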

\begin{proof}
Let $X = \{1,\ldots,n\}$ and consider $a \in M_n(\C)$ as a kernel $a:X\times X\to\C$.

Suppose first $\|a\|_S \leq 1$. By Proposition~\ref{prop:Gilbert} there is a Hilbert space $H$ and two maps $p,q:X\to H$ such that $a_{ij} = \la p(i),q(j)\ra$ and $\|p\|_\infty\|q\|_\infty \leq 1$ for all $i,j$. After replacing $p(i)$ and $q(j)$ by
$$
p'(i) = \frac{\sqrt{\|p\|_\infty\|q\|_\infty}}{\|p\|_\infty}\ p(i), \quad
q'(j) = \frac{\sqrt{\|p\|_\infty\|q\|_\infty}}{\|q\|_\infty}\ q(j)
$$
respectively, we may assume that $\|p\|_\infty\leq 1$ and $\|q\|_\infty\leq 1$. Let
$$
b_{ij} = \la p(i),p(j)\ra, \quad c_{ij} = \la q(i),q(j)\ra.
$$
Then $b$ and $c$ are positive matrices with diagonal below 1 and the matrix
$$
M=\begin{pmatrix}
	b & a \\
	a^* & c
	\end{pmatrix} =
	\bigg(
	\la r(i),r(j)\ra
	\bigg)_{i,j=1}^{2n}
$$
is positive where
$$
r(i) = \left\{
\begin{array}{ll}
p(i), & 1\leq i\leq n, \\
q(i-n), & n< i\leq 2n. \\
\end{array}
\right.
$$

\noindent
Conversely, suppose that
  $$
M=\begin{pmatrix}
	b & a \\
	a^* & c
	\end{pmatrix} \geq 0.
	$$
for some $b,c\in M_n(\C)_+$ with $b_{ii} \leq 1$, $c_{ii}\leq 1$. Then there is a Hilbert space $H$ and map $r : \{1,\ldots,2n\} \to H$ such that $M_{ij} = \la r(i),r(j)\ra$ for $i,j = 1,\ldots, 2n$. Put $p(i) = r(i)$ and $q(i) = r(i+n)$, $i=1,\ldots,n$. Then $a_{ij} = \la p(i),q(j)\ra$ and
$$
\|p(i)\|^2 = b_{ii} \leq 1, \quad \|q(j)\|^2 = c_{jj} \leq 1.
$$
It now follows from Proposition~\ref{prop:Gilbert} that $\|a\|_S \leq 1$.
\end{proof}

\noindent
Theorem~\ref{thm:A} is an immediate consequence of the following.

\begin{prop}\label{prop:gen}
Let $G$ be a countable, discrete group with a symmetric function $\phi:G\to\R$. The following are equivalent.
\begin{enumerate}
\item $\|e^{-t\phi}\|_{B_2}\leq 1$ for every $t> 0$.
\item There exist a real Hilbert space $H$ and maps $R,S:G\to H$ such that
$$
\phi(y^{-1}x) = ||R(x) - R(y)||^2 + ||S(x) + S(y)||^2 \quad\text{for all } x,y\in G.
$$
In particular, $||S(x)||^2$ is constant.
\end{enumerate}
\end{prop}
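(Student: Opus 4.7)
The strategy is to prove (2)$\Rightarrow$(1) by a direct Schur-norm estimate and (1)$\Rightarrow$(2) by applying Lemma~\ref{lem:Gilb} for each $t>0$ and extracting an infinitesimal limit as $t\to 0^+$ through a Hilbert-space ultraproduct.

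For (2)$\Rightarrow$(1), setting $y=x$ in the decomposition gives $\phi(e) = 4\|S(x)\|^2$, so $c := \|S(x)\|^2 = \phi(e)/4$ is constant (the ``in particular'' clause). Then $e^{-t\hat\phi} = K_1 K_2$ with $K_1(x,y) = e^{-t\|R(x)-R(y)\|^2}$ and $K_2(x,y) = e^{-t\|S(x)+S(y)\|^2} = e^{-2tc}\,e^{-2t\la S(x),S(y)\ra}$. Schoenberg's theorem makes $K_1$ positive definite with unit diagonal, so $\|K_1\|_S = 1$. For $K_2$, use the symmetric Fock-space embedding $E$ with $\la E(u), E(v)\ra = e^{\la u, v\ra}$ and $\|E(u)\|^2 = e^{\|u\|^2}$ to write $e^{-2t\la S(x),S(y)\ra} = \la E(\sqrt{2t}\,S(x)), E(-\sqrt{2t}\,S(y))\ra$; Proposition~\ref{prop:Gilbert} then gives Schur norm at most $e^{tc}\cdot e^{tc} = e^{2tc}$, so $\|K_2\|_S \leq 1$ and $\|e^{-t\hat\phi}\|_S \leq 1$.

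For (1)$\Rightarrow$(2), work first on a fixed finite $F \subseteq G$ (legitimate by Lemma~\ref{lem:sup}). Given $t>0$, Lemma~\ref{lem:Gilb} applied to $[e^{-t\phi(y^{-1}x)}]_{x,y\in F}$, combined with the symmetrisation $b_t = c_t$ (possible since $\hat\phi$ is real symmetric) and a diagonal adjustment forcing $(b_t)_{xx} = 1$, yields real unit vectors $p_t, q_t\colon F \to H_t$ with $\la p_t(x), q_t(y)\ra = e^{-t\phi(y^{-1}x)}$. Setting $u_t = (p_t + q_t)/2$ and $v_t = (p_t - q_t)/2$, the relation $\la p_t(x), q_t(x)\ra = e^{-t\phi(e)}$ together with $\|p_t(x)\| = \|q_t(x)\| = 1$ forces $u_t(x) \perp v_t(x)$ and the constant norms $\|u_t(x)\|^2 = (1 + e^{-t\phi(e)})/2$, $\|v_t(x)\|^2 = (1 - e^{-t\phi(e)})/2 = O(t)$; the symmetry of $\hat\phi$ then yields $e^{-t\phi(y^{-1}x)} = \la u_t(x), u_t(y)\ra - \la v_t(x), v_t(y)\ra$.

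To extract the infinitesimal part, fix a base point $x_0 \in F$ and set $R_t(x) = (u_t(x) - u_t(x_0))/\sqrt t$, $V_t(x) = v_t(x)/\sqrt t$. Both families are bounded in $H_t$, so along a non-principal ultrafilter on $(0,\infty)$ refining $t \to 0^+$ the Hilbert-space ultrapower produces limits $R^F, V^F \colon F \to \mathcal{H}^F$. Expanding $(1 - e^{-t\phi(y^{-1}x)})/t \to \phi(y^{-1}x)$ with the $u_t, v_t$ decomposition and tracking the $\sqrt t$-rescalings gives, after algebraic rearrangement,
$$\phi(y^{-1}x) = \tfrac12\|R^F(x) - R^F(y)\|^2 + \tfrac{\phi(e)}{2} + \la V^F(x), V^F(y)\ra,$$
with $\|V^F(x)\|^2 = \phi(e)/2$ constant. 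Putting $R(x) := R^F(x)/\sqrt 2$ and $S(x) := V^F(x)/\sqrt 2$ recasts this as $\phi(y^{-1}x) = \|R(x) - R(y)\|^2 + \|S(x) + S(y)\|^2$ on $F$, with $\|S(x)\|^2 = \phi(e)/4$. A second ultraproduct, over the directed system of finite subsets of $G$ (all containing a fixed basepoint), then patches these local decompositions into global maps $R, S\colon G \to \mathcal{H}$. The main technical obstacle will be the algebraic bookkeeping that isolates the correct constants in the displayed identity so that the $\sqrt t$-rescaled $u$- and $v$-vectors sort cleanly into the conditionally negative definite and positive definite parts, together with the second ultraproduct used to globalise over $G$.
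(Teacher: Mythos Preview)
Your proposal is correct; the core mechanism (Lemma~\ref{lem:Gilb} plus an ultraproduct to pass to $t\to 0$, followed by the symmetrisation $R=(P+Q)/2$, $S=(P-Q)/2$) is the same as the paper's, but the execution differs in two respects worth noting.

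For (2)$\Rightarrow$(1), the paper does not factor $e^{-t\hat\phi}$ as you do. Instead it makes the ranges of $R$ and $S$ orthogonal, writes $\phi(y^{-1}x)=\|P(x)-Q(y)\|^2$ with $P=R+S$, $Q=R-S$, and observes that $(x,y)\mapsto\|T(x)-T(y)\|^2$ is conditionally negative definite on the doubled set $G\sqcup\bar G$ (where $T|_G=P$ and $T|_{\bar G}=Q$). Schoenberg then makes the full $2|F|\times 2|F|$ matrix $[e^{-\rho}]$ positive with unit diagonal, and Lemma~\ref{lem:Gilb} reads off $\|e^{-\hat\phi}|_F\|_S\le 1$ from its upper-right block. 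Your Fock-space factorisation is a legitimate alternative and arguably more direct; the paper's route has the virtue of making the two implications visibly inverse to one another.

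For (1)$\Rightarrow$(2), the paper avoids your double ultraproduct by coupling the two limits: it enumerates $G=\{g_1,g_2,\ldots\}$, sets $G_n=\{g_1,\ldots,g_n\}$, applies Lemma~\ref{lem:Gilb} to $e^{-\phi/n}|_{G_n}$, and takes a single ultraproduct over $n$. It also does \emph{not} arrange $b_t=c_t$ in advance; it works with arbitrary $b_n,c_n$, obtains $\phi(y^{-1}x)=\|P(x)-Q(y)\|^2$ directly, and only then invokes symmetry of $\phi$ together with the parallelogram identity
\[
\tfrac12\big(\|P(x)-Q(y)\|^2+\|P(y)-Q(x)\|^2\big)=\|R(x)-R(y)\|^2+\|S(x)+S(y)\|^2.
\]
Your early symmetrisation $b_t=c_t$ (by averaging the block matrix with its block-transpose) is correct and makes the cross-terms $\langle u_t(x),v_t(y)\rangle$ vanish as you need; it trades the parallelogram step for that averaging step. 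Either way works, but the single-ultraproduct organisation is cleaner and spares you the patching over finite $F$.
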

\begin{proof}

We will need to work with two disjoint copies of $G$, so let $\bar G$ denote another copy of $G$. We denote the elements of $\bar G$ by $\bar g$, when $g\in G$.

(2) $\implies$ (1): It suffices to prove the case when $t = 1$. After replacing the maps $R$ and $S$ by the maps $R',S':G\to H\oplus H$ given by $R'(x) = (R(x),0)$ and $S'(x) = (0,S(x))$, we may assume that $R$ and $S$ have orthogonal ranges. Then
$$
\phi(y^{-1}x) = ||R(x) + S(x) - ( R(y) - S(y) ) ||^2
$$
Let $P = R+S$ and $Q= R-S$. Define a map $T : G\sqcup\bar G \to H$ by
$$
T(x) = \begin{cases}
P(x), & x\in G,  \\
Q(x), & x\in \bar G.
\end{cases}
$$

Then the function $\rho(x,y) = ||T(x) - T(y)||^2$ is a conditionally negative definite kernel on the set $G\sqcup\bar G$, and by Schoenberg's Theorem the function $e^{-\rho}$ is positive definite, and we notice that $e^{-\rho}$ takes the value $1$ on the diagonal.

Given any finite subset $F = \{g_1,\ldots,g_n\}$ of $G$ we let $\bar F$ denote its copy inside $\bar G$. We see that the $2n\times 2n$ matrix $(e^{-\rho(x,y)})_{x,y\in F\sqcup\bar F}$ in $B(\ell^2(F \sqcup \bar F))$ is
$$
A = 
\left(\begin{array}{c|c}
& \\
e^{-||P(g_i)-P(g_j)||^2} & e^{-||P(g_i)-Q(g_j)||^2} \\
& \\
\hline
& \\
e^{-||Q(g_i)-P(g_j)||^2} & e^{-||Q(g_i)-Q(g_j)||^2} \\
& \\
\end{array}\right)
$$
Since $e^{-\rho}$ is positive definite, $A$ is positive. Now, Lemma~\ref{lem:Gilb} implies that the upper right block of $A$ has Schur norm at most $1$. And this precisely means that $|| e^{-\phi} |_F ||_S \leq 1$. An application of Lemma~\ref{lem:sup} now shows that $|| e^{-\phi}  ||_S \leq 1$.

(1) $\implies$ (2): We list the elements of $G$ as $G = \{g_1,g_2,\ldots\}$ and we let $G_n = \{g_1,\ldots,g_n\}$ when $n\in\N$. Since $||e^{-\phi/n}||_{B_2} \leq 1$ by assumption, we invoke Lemma~\ref{lem:Gilb} to get matrices $b_n,c_n\in M_n(\C)_+$ with diagonal entries at most one, and so that
$$
A_n = \left(\begin{array}{c|c}
b_n & e^{-\phi/n} \\ \hline
e^{-\phi/n} & c_n
\end{array}\right) \geq 0.
$$
Here $e^{-\phi/n}$ denotes the $n\times n$ matrix whose $(i,j)$ entry is $e^{-\phi(g_j^{-1}g_i)/n}$. After adding the appropriate diagonal matrix we may assume that the diagonal entries of $b_n$ and $c_n$ are $1$, and $A_n$ is still positive.

Let $k_n: (G_n \sqcup \bar G_n)^2 \to \C$ be the kernel that represents $A_n$ in the sense that
\begin{align*}
k_n(g_i,g_j) &= (b_n)_{i,j}, &
k_n(\bar g_i,\bar g_j) &= (c_n)_{i,j}, \\
k_n(g_i,\bar g_j) &= e^{-\phi(g_j^{-1}g_i)}, &
k_n(\bar g_i,g_j) &= e^{-\phi(g_j^{-1}g_i)},
\end{align*}
Since $A_n$ is positive, $k_n$ is a positive definite kernel. We define $k_n$ to be zero outside $(G_n \sqcup\bar G_n)^2$, which gives us a positive definite kernel on $G\sqcup \bar G$. Then the function $n ( 1 - k_n)$ is a conditionally negative definite kernel with zero in the diagonal, and hence there is a map $T_n: G\sqcup \bar G \to H_n$ such that
$$
||T_n(x) - T_n(y)||^2 = n( 1 - k_n(x,y)), \quad x,y\in G\sqcup \bar G
$$
for some real Hilbert space $H_n$. We may assume that $T_n(\bar e) = 0$.

Now, as we let $n$ vary over $\N$ we obtain a sequence of maps $(T_n)_{n\geq 1}$. Because $\lim_{t\to 0} (1- e^{-ta})/t = a$, we see that for $(x,\bar y)\in G_N \times \bar G_N$ and $n\geq N$
$$
||T_n(x) - T_n(\bar y)||^2 = n( 1 - k_n(x,\bar y)) = n(1- e^{-\phi(y^{-1}x)/n}) \to \phi(y^{-1}x) \text{ as } n\to\infty.
$$
Since $T_n(\bar e) = 0$, this shows in particular that $(||T_n(x)||)_{n\geq 1}$ is a bounded sequence for each $x\in G$ and hence also for each $x\in\bar G$.

Consider the ultraproduct of the Hilbert spaces $H_n$ with respect to some free ultrafilter $\omega$. We denote this space by $H$. Let $T(x)$ denote the vector corresponding to the sequence $(T_n(x))_{n\geq 1}$, i.e., the equivalence class of that sequence. Then
\begin{align}\label{eq:PQ}
\phi(y^{-1}x) = ||T(x) - T(\bar y)||^2 \quad\text{for every } (x,\bar y)\in G\times\bar G.
\end{align}
Let $P = T|_G$ and let $Q$ be defined on $G$ by $Q(x) = T(\bar x)$. We  think of $Q$ as the restriction of $T$ to $\bar G$ but defined on $G$. Then Equation~\eqref{eq:PQ} translates to
$$
\phi(y^{-1}x) = ||P(x) - Q(y)||^2 \quad\text{for every } x,y\in G.
$$
Let $R = (P+Q)/2$ and $S = (P-Q)/2$. The rest of the proof is simply to apply the parallelogram law. We have
$$
\frac12\left( ||P(x) - Q(y)||^2 + ||P(y) - Q(x)||^2 \right) = || S(x) + S(y) ||^2
+ || R(x) - R(y) ||^2.
$$
Since $\phi$ is symmetric, the left-hand side equals $\phi(y^{-1}x)$, and the proof is complete.

\end{proof}

\section{The amenable case}\label{sec:amenable}

In this section we prove Theorem~\ref{thm:amenable-case}. Theorem \ref{prop:amenable-case3} and Theorem~\ref{prop:non-amenable} combine to give Theorem~\ref{thm:amenable-case}.

We will need a few characterizations of amenability. The following theorem is well-known (for a proof, see \cite[Theorem 2.6.8]{BO}).

\begin{thm}\label{thm:amenable}
Let $G$ be a discrete group. The following are equivalent.
\begin{enumerate}
	\item $G$ is amenable, i.e., there is a left-invariant, finitely additive probability measure defined on all subsets of $G$.
	\item There is a net of finitely supported, positive definite functions on $G$ converging pointwise to $1$.
	\item For any finite, symmetric subset $E\subseteq G$ we have $\|\lambda(1_E) \| = |E|$. Here $\lambda$ denotes the left regular representation, and $1_E$ denotes the characteristic function of the subset $E$.
\end{enumerate}
\end{thm}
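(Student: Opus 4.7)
My plan is to prove the equivalences in a cycle $(1) \Longrightarrow (2) \Longrightarrow (3) \Longrightarrow (1)$, drawing on classical machinery around amenability: F\o lner's condition, the GNS realisation of finitely supported positive definite functions, Day's trick, and Reiter's property.

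For $(1) \Longrightarrow (2)$, I would pass through F\o lner's characterisation of amenability and take diagonal matrix coefficients of the regular representation against normalised indicators of a F\o lner sequence $(F_n)$. Setting $\xi_n = |F_n|^{-1/2}\, 1_{F_n} \in \ell^2(G)$ gives
$$
\phi_n(g) = \la \lambda(g)\xi_n, \xi_n\ra = \frac{|F_n \cap g F_n|}{|F_n|},
$$
a sequence of finitely supported (in $F_n F_n^{-1}$) positive definite functions tending pointwise to $1$.

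For $(2) \Longrightarrow (3)$, I would use that every finitely supported positive definite function $\phi$ with $\phi(e) = 1$ lies in the Fourier algebra $A(G)$ and so admits the form $\phi(g) = \la \lambda(g)\xi, \xi\ra$ for some unit $\xi \in \ell^2(G)$. Applied to a net $(\phi_i)$ as in (2), and using $E = E^{-1}$ to guarantee that the relevant sum is real, one obtains
$$
|E| \;\geq\; \|\lambda(1_E)\| \;\geq\; \la \lambda(1_E)\xi_i, \xi_i\ra \;=\; \sum_{g \in E} \phi_i(g) \;\longrightarrow\; |E|,
$$
so $\|\lambda(1_E)\| = |E|$.

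The main obstacle is $(3) \Longrightarrow (1)$. Enlarging $E$ I may assume $e \in E$, whereupon the symmetric probability measure $\mu := |E|^{-1} 1_E$ satisfies $\|\lambda(\mu)\| = 1$. Day's trick, namely that replacing a vector by its pointwise modulus does not decrease $\|\lambda(\mu)\xi\|_2$, supplies nonnegative unit vectors $\eta_n \in \ell^2(G)$ with $\|\lambda(\mu)\eta_n\|_2 \to 1$. A Jensen/Cauchy--Schwarz expansion of the defect then yields the key identity
$$
1 - \|\lambda(\mu)\eta_n\|_2^2 \;=\; \tfrac{1}{2}\sum_{k\in G} \nu(k)\,\|\lambda(k)\eta_n - \eta_n\|_2^2,
$$
where $\nu(k) = \sum_h \mu(h)\mu(hk)$ is a probability measure on $G$ that is strictly positive on each $k \in E$ (because $e \in E$). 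This forces $\|\lambda(k)\eta_n - \eta_n\|_2 \to 0$ for every $k \in E$. A second application of Day's trick, with $f_n := \eta_n^2 \in \ell^1(G)_+$, upgrades this to $\ell^1$-almost invariance via
$$
\|\lambda(k)f_n - f_n\|_1 \;\leq\; \|\lambda(k)\eta_n - \eta_n\|_2\,\|\lambda(k)\eta_n + \eta_n\|_2 \;\leq\; 2\,\|\lambda(k)\eta_n - \eta_n\|_2.
$$
Letting $E$ exhaust $G$ and extracting a diagonal subsequence produces a Reiter sequence in $\ell^1(G)_+$, whence $G$ is amenable. The delicate point is the weighted identity above; once it is in hand, the remaining transitions are routine.
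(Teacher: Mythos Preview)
The paper does not actually prove this theorem; it records it as well-known and defers to \cite[Theorem~2.6.8]{BO}. Your cycle $(1)\Rightarrow(2)\Rightarrow(3)\Rightarrow(1)$ is correct and is essentially the classical argument one finds in that reference: F{\o}lner vectors for $(1)\Rightarrow(2)$, Godement's theorem (finitely supported positive definite functions are diagonal coefficients of $\lambda$) for $(2)\Rightarrow(3)$, and the Kesten--Day argument for $(3)\Rightarrow(1)$. The identity you flag as delicate,
\[
1-\|\lambda(\mu)\eta_n\|_2^2=\tfrac12\sum_{k}\nu(k)\,\|\lambda(k)\eta_n-\eta_n\|_2^2,
\]
is exactly right once one observes that $\nu=\tilde\mu*\mu$ (your formula $\nu(k)=\sum_h\mu(h)\mu(hk)$ is precisely this, using the symmetry of $\mu$) and that $\eta_n$ is real-valued so that $\langle\lambda(k)\eta_n,\eta_n\rangle$ is real; the rest is just expanding $\|\lambda(k)\eta_n-\eta_n\|_2^2$. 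The passage from $\ell^2$- to $\ell^1$-almost invariance via $f_n=\eta_n^2$ and Cauchy--Schwarz is the standard Day trick, and the diagonal extraction over an exhaustion by finite symmetric sets is routine. There is nothing to correct.
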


\begin{cor}\label{cor:non-amenable}
If $G$ is discrete and non-amenable, then for each $\epsilon > 0$ there exists a positive, finitely supported, symmetric function $g\in C_c(G)$ such that
$$
\|\lambda(g)\| < \epsilon \|g\|_1,
$$
where $\lambda$ denotes the left regular representation.
\end{cor}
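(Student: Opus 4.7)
The plan is to exploit characterization~(3) in Theorem~\ref{thm:amenable}. Since $G$ is non-amenable, that characterization fails, so there exists a finite symmetric subset $E\subseteq G$ with
\[
c := \frac{\|\lambda(1_E)\|}{|E|} < 1,
\]
recalling that the inequality $\|\lambda(1_E)\|\leq |E|$ always holds. The idea is then that convolution powers of $1_E$ damp the reduced norm geometrically while the $\ell^1$-norm only grows multiplicatively, so $c^n$ will eventually dip below $\epsilon$.

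Concretely, I would set $f = 1_E \in C_c(G)$ and $g_n = f^{*n}$, the $n$-fold convolution product. First I would verify the three required structural properties. Non-negativity of $g_n$ follows immediately from $f\geq 0$ and the formula $(h_1 * h_2)(x) = \sum_y h_1(y)h_2(y^{-1}x)$; finite support follows because the support of $g_n$ is contained in $E^n$, which is finite since $E$ is; and symmetry follows by induction from the identity $(h_1*h_2)^*=h_2^**h_1^*$ (where $h^*(x)=\overline{h(x^{-1})}$), together with the fact that $f^*=f$ since $f$ is real and $E=E^{-1}$.

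Next, because $g_n\geq 0$ we have $\|g_n\|_1 = \|f\|_1^n = |E|^n$ (the interchange of summations causes no issue for non-negative functions), and because $\lambda$ is a $*$-homomorphism from the convolution algebra into $B(\ell^2(G))$, we obtain $\lambda(g_n)=\lambda(f)^n$, hence
\[
\|\lambda(g_n)\| \leq \|\lambda(f)\|^n = c^n|E|^n.
\]
Therefore $\|\lambda(g_n)\|/\|g_n\|_1 \leq c^n$, and since $c<1$ we may choose $n$ large enough that $c^n < \epsilon$; setting $g:=g_n$ yields the required function.

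There is no real obstacle in this argument; everything is a direct consequence of part~(3) of the preceding theorem combined with submultiplicativity of the reduced norm and multiplicativity of the $\ell^1$-norm on non-negative convolution factors. The only point that deserves a line of justification is that positivity, symmetry, and finite support are preserved under convolution powers, which is handled as above.
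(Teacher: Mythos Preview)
Your proof is correct and follows essentially the same approach as the paper: both pick a finite symmetric set witnessing the failure of Kesten's criterion, take convolution powers of its indicator function, and use submultiplicativity of the operator norm versus multiplicativity of the $\ell^1$-norm to drive the ratio below $\epsilon$. Your write-up is slightly more explicit about why positivity, symmetry, and finite support persist under convolution, but the argument is otherwise identical.
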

\begin{proof}
If $G$ is non-amenable, there is a finite, symmetric set $S\subseteq G$ such that $\|\lambda(1_S)\| < |S|$. Let $g =
1_S * \cdots * 1_S
$ be the $n$-fold convolution of $1_S$ with itself, where $n$ is to be determined later. Then $g$ is positive, finitely supported and symmetric. Observe that
$$
\| g\|_1 = |S|^n.
$$
Now, given any $0< \epsilon < 1$, choose $n$ so large that $\frac{\|\lambda(1_S)\|}{|S|} < \sqrt[n]\epsilon < 1$. Then
$$
\|\lambda(g)\| \leq \| \lambda(1_S) \|^n < \epsilon |S|^n = \epsilon \| g\|_1.
$$
\end{proof}

The following theorem proves one direction in Theorem~\ref{thm:amenable-case}.

\begin{thm}\label{prop:amenable-case3}
Let $G$ be a countable, discrete amenable group with a symmetric function $\phi:G\to\R$. If $\|e^{-t\phi}\|_\HS \leq 1$ for every $t>0$, then $\phi$ splits as
$$
\phi(x) = \psi(x) + ||\xi||^2 + \la\pi(x)\xi,\xi\ra  \qquad (x\in G),
$$
where
\begin{itemize}
\renewcommand{\labelitemi}{$\cdot$}
	\item $\psi$ is a conditionally negative definite function on $G$ with $\psi(e) = 0$,
	\item $\pi$ is an orthogonal representation of $G$ on some real Hilbert space $H$,
	\item and $\xi$ is a vector in $H$.
\end{itemize}
\end{thm}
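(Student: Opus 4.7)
The plan is to start from Proposition~\ref{prop:gen}, which applied to $\phi$ yields a real Hilbert space $H$ and maps $R, S : G \to H$ satisfying
\[
\phi(y^{-1}x) = \|R(x) - R(y)\|^2 + \|S(x) + S(y)\|^2,
\]
with $\|S(x)\|^2$ equal to some constant $c$. The goal is to use amenability to convert this decomposition on $G\times G$ into a splitting of $\phi$ on $G$ itself, by averaging out the non-invariance with a left-invariant mean $m$ on $\ell^\infty(G)$ (which exists by Theorem~\ref{thm:amenable}).

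The key observation is that for each $g\in G$ the substitution $(x,y)\mapsto (g^{-1}x,g^{-1}y)$ leaves the left-hand side unchanged, so the identity persists for every $g$. Because $\|S(g^{-1}x)+S(g^{-1}y)\|^2 \leq 4c$ uniformly in $g$, the other summand is also bounded in $g$, and I can safely define
\[
\psi_0(x,y) = m_g\bigl[\|R(g^{-1}x) - R(g^{-1}y)\|^2\bigr], \qquad \theta(x,y) = m_g\bigl[\|S(g^{-1}x) + S(g^{-1}y)\|^2\bigr].
\]
For each fixed $g$, the first integrand is a conditionally negative definite kernel vanishing on the diagonal (a Hilbert-space distance squared), while the second expands as $2c + 2\langle S(g^{-1}x),S(g^{-1}y)\rangle$ and is therefore positive definite. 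Positivity and linearity of $m$ promote these properties to $\psi_0$ and $\theta$ (applying $m$ to the pointwise inequalities $\sum c_i\bar c_j k_g(x_i,x_j) \geq 0$ or $\leq 0$), and left-invariance of $m$ yields $\psi_0(hx,hy) = \psi_0(x,y)$ and $\theta(hx,hy) = \theta(x,y)$ for all $h\in G$. Thus both kernels descend to functions on $G$: $\psi_0(x,y) = \psi(y^{-1}x)$ for a real CND $\psi$ with $\psi(e)=0$, and $\theta(x,y) = F(y^{-1}x)$ for a real, bounded, positive definite $F$. Since $\psi_0+\theta=\phi$ on $G\times G$, we get $\phi = \psi + F$ on $G$.

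It remains to put the positive part $F$ into the stated form. Writing $F(x) = 2c + 2K(x)$, where $K(x) = m_g[\langle S(g^{-1}x),S(g^{-1})\rangle]$ is a real positive definite function on $G$ with $K(e)=c$, the real GNS construction from the Preliminaries furnishes an orthogonal representation $\pi$ on a real Hilbert space together with a vector $\eta$ such that $K(x) = \langle\pi(x)\eta,\eta\rangle$ and $\|\eta\|^2 = c$. Setting $\xi = \sqrt{2}\,\eta$ gives $\|\xi\|^2 = 2c$ and $\langle\pi(x)\xi,\xi\rangle = 2K(x)$, hence $F(x) = \|\xi\|^2 + \langle\pi(x)\xi,\xi\rangle$ and finally $\phi(x) = \psi(x) + \|\xi\|^2 + \langle\pi(x)\xi,\xi\rangle$, as required. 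The main technical point I expect to have to spell out is that averaging by a positive, left-invariant mean really does preserve conditional negative definiteness (for $g$-dependent kernels $k_g$ with $k_g$ itself CND in $(x,y)$ for each $g$) and positive definiteness; the left-invariance convention also has to be tracked so that $\psi_0$ and $\theta$ are genuinely left-translation-invariant on $G\times G$.
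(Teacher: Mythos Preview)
Your proposal is correct and follows essentially the same route as the paper: apply Proposition~\ref{prop:gen}, average the two kernel pieces against a left-invariant mean to force left-invariance (noting the $S$-part is uniformly bounded so the $R$-part is bounded along each diagonal), descend to functions on $G$, and then use the real GNS construction on the positive definite part to obtain $(\pi,\xi)$. The paper writes the averaging as $\int \phi_i(zx,zy)\,d\mu(z)$ rather than your $m_g[\phi_i(g^{-1}x,g^{-1}y)]$, but this is purely cosmetic.
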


\begin{proof}
The idea of the proof is to use the characterization given in Proposition~\ref{prop:gen} and then average the two parts of $\phi$ by using an invariant mean on $G$.

Suppose we are given a function $\phi$ as in the statement of the proposition. By Proposition~\ref{prop:gen} we may write $\phi$ in the form
$$
\phi(y^{-1}x) = ||R(x) - R(y)||^2 + ||S(x) + S(y)||^2 \quad\text{for all } x,y\in G,
$$
where $R,S$ are maps from $G$ with values in some real Hilbert space $H$. We define kernels $\phi_1$ and $\phi_2$ on $G$ by
$$
\phi_1(x,y) = ||R(x) - R(y)||^2, \quad \phi_2(x,y) = ||S(x) + S(y)||^2 \quad\text{for all } x,y\in G.
$$
Then $\phi(y^{-1}x) = \phi_1(x,y) + \phi_2(x,y)$. Note that $\phi_2$ is a bounded function, since $||2S(x)||^2 = \phi(e)$ for every $x\in G$. In general, $\phi_1$ is not bounded, but it is bounded on the diagonals, i.e., for each $x,y\in G$ the function $z\mapsto \phi_1(zx,zy)$ is bounded. To see this, simply observe that
$$
\phi_1(zx,zy) = \phi((zy)^{-1}(zx)) - \phi_2(zx,zy) = \phi(y^{-1}x) - \phi_2(zx,zy).
$$
Since $\phi_2$ is bounded, it follows that $\phi_1$ is bounded on diagonals.

As we assumed $G$ to be amenable, there is a left-invariant mean $\mu$ on $G$. Let
$$
\chi_i(x,y) = \int_G \phi_i(zx,zy) \ d\mu(z) \qquad x,y\in G,\ i=1,2.
$$
The left-invariance of $\mu$ implies that $\chi_i(wx,wy) = \chi_i(x,y)$ for every $x,y,w\in G$, so $\chi_i$ induces a function $\bar\phi_i$ defined on $G$ by
$$
\bar\phi_i(y^{-1}x) = \chi_i(x,y).
$$
An easy computation will show that $\phi = \bar\phi_1 + \bar\phi_2$.

Since $\phi_1$ is a conditionally negative definite kernel on $G$, it follows that $\chi_1$ is conditionally negative definite. So $\bar\phi_1$ is conditionally negative definite. The same argument shows that $\bar\phi_2$ is positive definite, because $\phi_2$ is. More precisely we have
$$
\bar\phi_2(y^{-1}x) = \int_G ||S(zx) + S(zy)||^2 \ d\mu(z) = \frac{\phi(e)}{2} +  2 \int_G \la S(zx), S(zy)\ra \ d\mu(z),
$$
where each function $(x,y)\mapsto \la S(zx),S(zy)\ra$ is a positive definite kernel. So the function on $G$ given by
$$
y^{-1}x \mapsto \int_G \la S(zx), S(zy)\ra \ d\mu(z)
$$
is positive definite, and so it has the form
$$
g \mapsto \la \pi(g)\xi',\xi'\ra
$$
for some orthogonal representation $\pi$. Since $\bar\phi_1(e) = 0$, we must have
$$
\phi(e) = \bar\phi_2(e) = \frac{\phi(e)}{2} + 2 ||\xi'||^2,
$$
and so $\frac{\phi(e)}{2} = 2 ||\xi'||^2$. The proof is now complete if we let $\psi = \bar\phi_1$ and $\xi = \sqrt2\xi'$.
\end{proof}

We now turn to prove that the amenability assumption is essential in the theorem above. This will be accomplished in Theorem~\ref{prop:non-amenable}.

In \cite{MR806070} Bo\.{z}ejko proved that a countable, discrete group $G$ is amenable if and only if its Fourier-Stieltjes algebra $B(G)$ (the linear span of positive definite functions) coincides with the Herz-Schur multiplier algebra $B_2(G)$. In Proposition~\ref{lem:bozejko} we will strengthen this result slightly to fit our needs. Our proof of Proposition~\ref{lem:bozejko} is merely an adaption of Bo\.{z}ejko's proof.

In the following we will introduce the \emph{Littlewood kernels} and \emph{Littlewood functions}. Let $X$ be a non-empty set. A bounded operator $T : \ell^1(X) \to \ell^2(X)$ is identified with its matrix $T = [T_{xy}]$ given by $T_{xy} = \la T\delta_y,\delta_x \ra$. We also identify the matrix with the corresponding kernel $t$ on $X$ given by $t(x,y) = T_{xy}$. Similarly, the Banach space adjoint $T^* : \ell^2(X)^* \to \ell^1(X)^*$ has matrix $T^*_{xy} = \bar{\la T\delta_x,\delta_y\ra}$ and may be identified with a kernel on $X$.

We shall identify $\ell^1(X)^* = \ell^\infty(X)$ and $\ell^2(X)^* = \ell^2(X)$. It is known that every bounded operator $\ell^2(X) \to \ell^\infty(X)$ arises as the adjoint of a (unique) bounded operator $\ell^1(X) \to \ell^2(X)$. We note that a kernel $b:X\times X\to\C$ is the matrix of a bounded operator $\ell^1(X)\to\ell^2(X)$ if and only if
$$
\|b\|_{\ell^1\to\ell^2}^2 = \sup_{y\in X} \sum_{x\in X} | b_{xy} |^2
$$
is finite. In the same way, $c:X\times X\to\C$ is the matrix of a bounded operator $\ell^2(X)\to\ell^\infty(X)$ if and only if
$$
\|c\|_{\ell^2\to\ell^\infty}^2 = \sup_{x\in X} \sum_{y\in X} | c_{xy} |^2
$$
is finite.

We define the \emph{Littlewood kernels} on $X$ to be
$$
t_2(X) = \{ b + c \mid b\in B(\ell^1(X),\ell^2(X)), c\in B(\ell^2(X),\ell^\infty(X)) \}.
$$
The space $t_2(X)$ is naturally equipped with the (complete) norm
$$
\| a \|_L = \inf \{ \max(\|b\|_{\ell^1\to\ell^2} , \|c\|_{\ell^2\to\ell^\infty} ) \mid a = b+c \}.
$$
The following characterization of Littlewood kernels is due to Varopoulos and is a special case of \cite[Lemma 5.1]{MR0355642}. For completeness, we include a proof of our special case.
\begin{lem}
Let $X$ be a countable set, and let $a:X\times X\to\C$ be a kernel. Then $a$ is a Littlewood kernel if and only if the norm
$$
\| a \|_{t_2} = \sup \left\{ \frac{1}{|F_1|} \sum_{\substack{i\in F_1 \\ j\in F_2}} |a_{ij}|^2 \middle| F_1,F_2\subseteq X \text{ finite}, |F_1| = |F_2| \right\}^{1/2}
$$
is finite.
%In this case we may choose the splitting $a = b+c$ such that $b$ and $c$ have disjoint supports in $X\times X$.
The norms $\|\ \|_{t_2}$ and $\|\ \|_L$ are equivalent.
\end{lem}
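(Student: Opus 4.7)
The inequality $\|a\|_{t_2} \leq 2\|a\|_L$ is the easier direction. Given $\varepsilon>0$ and a decomposition $a = b+c$ with $\max(\|b\|_{\ell^1\to\ell^2},\|c\|_{\ell^2\to\ell^\infty}) \leq \|a\|_L + \varepsilon$, the pointwise bound $|a_{ij}|^2 \leq 2|b_{ij}|^2 + 2|c_{ij}|^2$ gives, for finite $F_1,F_2 \subseteq X$ with $|F_1|=|F_2|=N$, the estimates $\sum_{F_1\times F_2}|b_{ij}|^2 \leq \sum_{j\in F_2}\|b_{\cdot,j}\|_2^2 \leq N\|b\|_{\ell^1\to\ell^2}^2$, and symmetrically $\sum_{F_1\times F_2}|c_{ij}|^2 \leq N\|c\|_{\ell^2\to\ell^\infty}^2$ by summing over rows. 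Dividing by $N$, taking suprema over $F_1,F_2$, and letting $\varepsilon\to 0$ yields the bound.

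For the harder inequality $\|a\|_L \leq C\|a\|_{t_2}$, I would first reduce to the case where $X$ is finite: both norms are monotone under finite truncation, and a weak-$\ast$ compactness argument in $B(\ell^1,\ell^2)\times B(\ell^2,\ell^\infty)$ promotes uniformly bounded decompositions of the truncations back to a decomposition of $a$. For finite $X$, the unit ball of $\|\cdot\|_L$ is the Minkowski sum of the unit balls of $B(\ell^1,\ell^2)$ and $B(\ell^2,\ell^\infty)$, so by Hahn-Banach its polar consists of kernels $\phi:X\times X \to \C$ with $\sum_j\|\phi_{\cdot,j}\|_2 + \sum_i\|\phi_{i,\cdot}\|_2 \leq 1$. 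Hence $\|a\|_L = \sup|\la\phi,a\ra|$ over such $\phi$, and the task is to bound $|\la\phi,a\ra|$ by $C\|a\|_{t_2}$ uniformly.

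To estimate $|\la\phi,a\ra|$, set $\alpha_j = \|\phi_{\cdot,j}\|_2$ and $\beta_i = \|\phi_{i,\cdot}\|_2$ and decompose the support of $\phi$ dyadically along $F_k = \{j : \alpha_j \in (2^{-k-1},2^{-k}]\}$ and $G_l = \{i : \beta_i \in (2^{-l-1},2^{-l}]\}$, noting the sharp size bounds $|F_k| \leq 2^{k+1}m_k$ and $|G_l| \leq 2^{l+1}n_l$ in terms of the level masses $m_k = \sum_{j\in F_k}\alpha_j$ and $n_l = \sum_{i\in G_l}\beta_i$ satisfying $\sum m_k + \sum n_l \leq 1$. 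On each block $G_l \times F_k$, Cauchy-Schwarz combined with the column/row constraints on $\phi$ and with the $t_2$ bound $\|a|_{G_l\times F_k}\|_2^2 \leq \max(|F_k|,|G_l|)\|a\|_{t_2}^2$ (applied after padding the shorter side to square up the rectangle) gives per-block estimates which, summed with care against the mass constraints on $m_k,n_l$, yield $|\la\phi,a\ra|\leq C\|a\|_{t_2}$.

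The main obstacle is the double summation over the dyadic levels $(k,l)$: the crude per-block Cauchy-Schwarz estimate produces a uniform $O(1)$ contribution, so without exploiting the refined mass bounds $|F_k|\leq 2^{k+1}m_k$ and $|G_l|\leq 2^{l+1}n_l$ the double sum would diverge. The required bookkeeping---balancing the geometric $2^{\pm k/2}$ factors on the $\phi$ side against the padding loss on the $a$ side, and using the $\ell^1$-summability of $(m_k)$ and $(n_l)$ to contract the resulting double series---is the heart of Varopoulos's argument in \cite{MR0355642}, which I would follow closely.
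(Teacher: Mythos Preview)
Your easy direction ($\|a\|_{t_2}\leq 2\|a\|_L$) and your reduction from countable to finite $X$ match the paper's. For the hard direction, however, the paper takes a completely different and far more elementary route: a direct inductive construction on $|X|$. One selects a row $i_0$ of minimal $\ell^2$-norm and a column $j_0$ of minimal $\ell^2$-norm (both automatically bounded by $\|a\|_{t_2}$ by the definition of the norm), removes them, applies the induction hypothesis to the remaining $(n-1)\times(n-1)$ matrix to get a splitting $a'=b'+c'$ with disjoint supports, and then reinserts the deleted row into the $c$-part and the deleted column into the $b$-part. This yields $\|a\|_L\leq\|a\|_{t_2}$ with constant $1$, and the splitting has disjoint supports, which makes the passage to countable $X$ via an ultrafilter limit especially clean (each entry of $b^{(k)},c^{(k)}$ lies in $\{0,a_{ij}\}$).

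Your duality/dyadic plan is a legitimate alternative in the spirit of Varopoulos, but note a small slip: the polar of the Minkowski sum $B_{\ell^1\to\ell^2}+B_{\ell^2\to\ell^\infty}$ is the \emph{intersection} of the polars, so the dual unit ball is $\{\phi:\max(\sum_j\|\phi_{\cdot,j}\|_2,\sum_i\|\phi_{i,\cdot}\|_2)\leq 1\}$ rather than the sum condition you wrote (harmless up to a factor of $2$). More substantively, your approach defers the genuine work to the double dyadic summation and ultimately to \cite{MR0355642}, yields a worse constant, and is not self-contained. The paper's inductive argument avoids all of this machinery and is worth learning.
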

\noindent
It is implicit in the statement that $\| \ \|_{t_2}$ in fact defines a norm on $t_2(X)$. This is not hard to check, and moreover $t_2(X)$ is a Banach space with this norm. The lemma is also true when $X$ is uncountable, but we have no need for this generality.

\begin{proof}
Suppose first that $a$ is a Littlewood kernel, and write $a = b + c$ as in the definition. Given finite subsets $F_1,F_2 \subseteq X$ of the same size we have
$$
\frac{1}{|F_1|} \sum_{\substack{i\in F_1 \\ j\in F_2}} |a_{ij}|^2 \leq \frac{2}{|F_1|} \sum_{\substack{i\in F_1 \\ j\in F_2}} |b_{ij}|^2 + |c_{ij}|^2 \leq 4 \max(\| b\|^2 , \| c\|^2),
$$
and so
$$
\| a\|_{t_2} \leq 2 \max(\|b\| , \|c\|) < \infty.
$$
We have actually shown that $\|a\|_{t_2} \leq 2 \|a\|_L$.

Suppose conversely that $a$ is a kernel such that $C = \|a\|_{t_2}$ is finite. We will show that $a$ is a Littlewood kernel of the form $b + c$, where $b$ and $c$ have disjoint supports, and $\|b\| \leq C$ and $\|c\| \leq C$. We finish the proof of the lemma first in the case where $X$ is finite and proceed by induction on $|X|$. The case $|X| = 1$ is trivial. Assume then $n = |X| \geq 2$ and write
$$
a = \begin{pmatrix}
a_{11} & \cdots & a_{1n}   \\
\vdots & \ddots & \vdots \\
a_{n1} & \cdots & a_{nn}
\end{pmatrix}.
$$
Choose an index $i$ such that $\sum_j |a_{ij}|^2$ is as small as possible. In particular, our assumption implies that $\sum_j |a_{ij}|^2 \leq C$. Similarly, choose an index $j$ such that $\sum_i |a_{ij}|^2$ is as small as possible. Consider then the submatrix $a'$ of $a$ with $i$'th row and $j$'th column removed. To simplify the notation we assume that $i = j = 1$. Then
$$
a' = 
\begin{pmatrix}
a_{22} & \cdots & a_{2n}   \\
\vdots & \ddots & \vdots \\
a_{n2} & \cdots & a_{nn}
\end{pmatrix}.
$$
By our induction hypothesis $a'$ is a Littlewood kernel with $\|a'\|_L \leq C$, and so we may write $a' = b + c$, that is
$$
\begin{pmatrix}
a_{22} & \cdots & a_{2n}   \\
\vdots & \ddots & \vdots \\
a_{n2} & \cdots & a_{nn}
\end{pmatrix}
=
\begin{pmatrix}
b_{22} & \cdots & b_{2n}   \\
\vdots & \ddots & \vdots \\
b_{n2} & \cdots & b_{nn}
\end{pmatrix}
+
\begin{pmatrix}
c_{22} & \cdots & c_{2n}   \\
\vdots & \ddots & \vdots \\
c_{n2} & \cdots & c_{nn}
\end{pmatrix},
$$
where $b$ and $c$ have disjoint supports and $\max(\|b\|,\|c\|)\leq C$. We then obtain the desired splitting for $a$ by putting the removed rows back (we do not care whether $a_{ij} = a_{11}$ is put in the first or second matrix, so simply put it in the first),
$$
\left(\begin{array}{c|ccc}
a_{11} & a_{12} & \cdots & a_{1n} \\ \hline
a_{21} & a_{22} & \cdots & a_{2n} \\
\vdots & \vdots & \ddots & \vdots \\
a_{n1} & a_{n2} & \cdots & a_{nn}
\end{array}\right)
=
\left(\begin{array}{c|ccc}
a_{11} & a_{12} & \cdots & a_{1n} \\ \hline
0 & b_{22} & \cdots & b_{2n} \\
\vdots & \vdots & \ddots & \vdots \\
0 & b_{n2} & \cdots & b_{nn}
\end{array}\right)
+
\left(\begin{array}{c|ccc}
0 & 0 & \cdots & 0 \\ \hline
a_{21} & c_{22} & \cdots & c_{2n} \\
\vdots & \vdots & \ddots & \vdots \\
a_{n1} & c_{n2} & \cdots & c_{nn}
\end{array}\right).
$$
This completes the induction step.

We now turn to the general case, where $X$ is countably infinite. We may assume $X = \N$. Let $\omega$ be a free ultrafilter on $\N$. For each $k\in\N$ we let $a^{(k)}$ be the restriction of $a$ to $\{1,\ldots,k\}^2$, and choose a splitting $a^{(k)} = b^{(k)} + c^{(k)}$. For each $i,j\in\N$ and $k\geq i,j$ we have $|b^{(k)}_{ij}|^2 \leq C$, so each sequence $(b^{(k)}_{ij})_k$ is bounded. Similarly, $(c^{(k)}_{ij})_k$ is bounded. Let
$$
b_{ij} = \lim_{k\to \omega} b^{(k)}_{ij}, \qquad c_{ij} = \lim_{k\to \omega} c^{(k)}_{ij}.
$$
Since $a_{ij} = b^{(k)}_{ij} + c^{(k)}_{ij}$ for every $k \geq i,j$, it follows that $a_{ij} = b_{ij} + c_{ij}$. Also, since $b^{(k)}_{ij} \in \{a_{ij} , 0 \}$ for every $k$, we must have $b_{ij}\in\{ a_{ij}, 0\}$. Similarly with $c_{ij}$. This shows that $b$ and $c$ have disjoint supports. The sum conditions
$$
\sup_j \sum_i |b_{ij}|^2 \leq C, \qquad \sup_i \sum_j |c_{ij}|^2 \leq C
$$
are also satisfied. In particular we have $\|a\|_L \leq \|a\|_{t_2}$.
\end{proof}

If $X = G$ is a group, and $a:G\to\C$ is a function, we say that $a$ is a \emph{Littlewood function}, if $\hat a(x,y) = a(y^{-1}x)$ is a Littlewood kernel. We denote the set of Littlewood functions on $G$ by $T_2(G)$ and equip it with the norm $\|a\|_{T_2} = \|\hat a\|_{t_2}$. It is easy to see that $\|a\|_{T_2} \leq \|a\|_{\ell^2}$, so $\ell^2(G) \subseteq T_2(G)$.

Let $M(\ell^\infty(G),B_2(G)) = M(\ell^\infty,B_2)$ be the set of functions $a:G\to\C$ such that the pointwise product $a\cdot f$ is a Herz-Schur multiplier for every $f\in\ell^\infty(G)$. It is a Banach space when equipped with the norm
$$
\|g\|_{M(\ell^\infty,B_2)} = \sup \{ \|a\cdot f\|_{B_2} \mid \|f\|_\infty \leq 1\}.
$$

\begin{lem}\label{lem:littlewood-multiplier}
The following inclusion holds.
$$
T_2(G) \subseteq M(\ell^\infty(G),B_2(G)).
$$
\end{lem}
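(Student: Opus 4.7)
The plan is to unfold the definitions and apply the Gilbert/Haagerup characterization of Schur multipliers (Proposition~\ref{prop:Gilbert}) to each of the two pieces in a Littlewood decomposition.

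Fix $a \in T_2(G)$ and $f \in \ell^\infty(G)$. I would first write $\hat a = b + c$ as in the definition of a Littlewood kernel, where $b \in B(\ell^1(G),\ell^2(G))$ and $c \in B(\ell^2(G),\ell^\infty(G))$, with $\max(\|b\|_{\ell^1\to\ell^2},\|c\|_{\ell^2\to\ell^\infty}) \leq \|a\|_L + \epsilon$. Since $\widehat{a\cdot f}(x,y) = a(y^{-1}x)f(y^{-1}x) = b(x,y)f(y^{-1}x) + c(x,y)f(y^{-1}x)$, it suffices to show that each of these two kernels is a Schur multiplier with norm controlled by $\|f\|_\infty$ times the corresponding operator norm.

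For the $b$-piece I would take $H = \ell^2(G)$ and define
$$
P(x) = \delta_x, \qquad Q(y)(g) = \overline{b(g,y)\,f(y^{-1}g)} \quad (g \in G).
$$
A direct computation gives $\langle P(x), Q(y)\rangle = b(x,y)f(y^{-1}x)$, while $\|P(x)\|=1$ and
$$
\|Q(y)\|^2 = \sum_{g\in G} |b(g,y)|^2 |f(y^{-1}g)|^2 \leq \|f\|_\infty^2 \, \|b\|_{\ell^1\to\ell^2}^2,
$$
using the defining bound $\sup_{y}\sum_{g}|b(g,y)|^2 \leq \|b\|_{\ell^1\to\ell^2}^2$. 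By Proposition~\ref{prop:Gilbert}, the kernel $(x,y)\mapsto b(x,y)f(y^{-1}x)$ is therefore a Schur multiplier of norm at most $\|f\|_\infty \|b\|_{\ell^1\to\ell^2}$. The $c$-piece is handled symmetrically, swapping the roles of the two variables: take $P(x)(g) = c(x,g)f(g^{-1}x)$ and $Q(y) = \delta_y$, and use $\sup_{x}\sum_{g}|c(x,g)|^2 \leq \|c\|_{\ell^2\to\ell^\infty}^2$ to bound $\|P(x)\|^2$.

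Adding the two bounds and taking the infimum over all decompositions yields
$$
\|a\cdot f\|_{B_2} \leq \|f\|_\infty\bigl(\|b\|_{\ell^1\to\ell^2}+\|c\|_{\ell^2\to\ell^\infty}\bigr) \leq 2\|f\|_\infty\|a\|_L,
$$
so $a \cdot f \in B_2(G)$ for every $f \in \ell^\infty(G)$, proving the desired inclusion (in fact with $\|a\|_{M(\ell^\infty,B_2)} \leq 2\|a\|_L$, which by the previous lemma is controlled by a constant multiple of $\|a\|_{T_2}$). There is no real obstacle here beyond guessing the right Gilbert-type factorization: the key observation is that the asymmetry between the $\ell^1\to\ell^2$ and $\ell^2\to\ell^\infty$ conditions corresponds exactly to putting a delta function in one slot and absorbing both the kernel and the multiplier $f$ into the other slot.
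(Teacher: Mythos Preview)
Your proof is correct and uses essentially the same idea as the paper's: the Gilbert factorization applied to each piece of a Littlewood decomposition, exploiting the square-summability of columns of $b$ and rows of $c$. The paper organizes it slightly differently, first showing $T_2(G)\subseteq B_2(G)$ and then separately noting the easy stability $t_2(X)\cdot\ell^\infty(X\times X)\subseteq t_2(X)$ (so in fact $a\cdot f$ stays in $T_2(G)$), whereas you build $f$ directly into the Gilbert vectors in one pass; both arguments are equivalent in substance.
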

\begin{proof}
Note first that $T_2(G)\subseteq B_2(G)$, since if $a\in T_2(G)$ is given, and $\hat a = b + c$ is a splitting as in the definition of Littlewood kernels, then
$$
a(y^{-1}x) = \la b\delta_y,\delta_x\ra + \bar{\la c\delta_x,\delta_y\ra}.
$$
Now use Proposition~\ref{prop:Gilbert}.

Secondly, it is easy to see that $t_2(X) \cdot \ell^\infty(X\times X) \subseteq t_2(X)$, and we conclude that
$$
T_2(G)\cdot \ell^\infty(G) \subseteq T_2(G) \subseteq B_2(G).
$$
\end{proof}

In the proof of Proposition~\ref{lem:bozejko} we will need the notion of a cotype 2 Banach space. A Banach space $X$ is of \emph{cotype 2} if there is a constant $C>0$ such that for any finite subset $\{x_1,\ldots,x_n\}$ of $X$ we have
$$
C\left( \sum_{k=1}^n \| x_k \|^2 \right)^{1/2} \leq \int_0^1 \left\| \sum_{k=1}^n r_k(t) x_k \right\| \ dt.
$$
Here $r_n$ are the Rademacher functions on $[0,1]$. It is well-known that $L^p$-spaces are of cotype 2 when $1\leq p \leq 2$. Also, the dual of a \Cs-algebra is of cotype 2 (see \cite{MR0355667}). (See also \cite{MR512252} for a simple proof of this fact.)

Whenever $A$ is a set of functions $G\to\C$ defined on a group $G$, we denote by $A_\sym$ the symmetric functions in $A$, i.e., $A_\sym = \{\phi \in A \mid \phi(x) = \phi(x^{-1}) \text{ for all } x\in G \}$.

\begin{prop}\label{lem:bozejko}
Let $G$ be a discrete group. The following are equivalent.
\begin{enumerate}
	\item $G$ is amenable.
	\item $B_2(G) = B(G)$.
	\item $B_2(G)_\sym = B(G)_\sym$.
%	\item $B_2(G)_\sym$ is of cotype 2.
\end{enumerate}
\end{prop}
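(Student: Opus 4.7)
The equivalence (1) $\Leftrightarrow$ (2) is due to Bo\.zejko \cite{MR806070} and may be taken for granted; the implication (2) $\Rightarrow$ (3) is trivial. The content therefore lies in (3) $\Rightarrow$ (1), which I shall prove by contraposition, following Bo\.zejko's original strategy while taking care to work entirely within the symmetric subspaces.

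Suppose (3). Since the inclusion $B(G)_\sym \hookrightarrow B_2(G)_\sym$ is always contractive and, by hypothesis, bijective, the closed graph theorem supplies a constant $K>0$ with $\|\phi\|_B \le K\|\phi\|_{B_2}$ for every symmetric $\phi \in B_2(G)$. Combining this with Lemma~\ref{lem:littlewood-multiplier}, applied to symmetric factors only, yields a further constant $K'$ such that
$$\|\phi \cdot h\|_B \le K'\|\phi\|_{T_2}\|h\|_\infty \qquad \text{whenever } \phi\in T_2(G)_\sym,\ h\in\ell^\infty(G)_\sym.$$

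Next I apply cotype 2 of $B(G) = C^*(G)^*$. Fix group elements $s_1,\ldots,s_n$ so that the pairs $\{s_k,s_k^{-1}\}$ are pairwise disjoint, together with scalars $a_1,\ldots,a_n\in\R$, and set $\phi = \sum_k a_k(\delta_{s_k}+\delta_{s_k^{-1}})$. For each sign pattern $\epsilon\in\{\pm 1\}^n$, the symmetric function $h_\epsilon$ taking value $\epsilon_k$ on $\{s_k,s_k^{-1}\}$ (and zero otherwise) has $\|h_\epsilon\|_\infty \le 1$, and $\phi h_\epsilon = \sum_k \epsilon_k a_k(\delta_{s_k}+\delta_{s_k^{-1}})$. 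Applying the cotype 2 inequality to the vectors $f_k = a_k(\delta_{s_k}+\delta_{s_k^{-1}})$, using that $\|\delta_{s_k}+\delta_{s_k^{-1}}\|_B \ge 1$ (since the $B$-norm dominates the $\ell^\infty$-norm), and invoking the previous bound on $\|\phi h_\epsilon\|_B$, one obtains
$$\|\phi\|_{\ell^2} \le K''\|\phi\|_{T_2}$$
for every such $\phi$, and by a routine partition argument for every symmetric, finitely supported function on $G$.

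The final step is to violate this inequality using the non-amenability of $G$. By Corollary~\ref{cor:non-amenable}, one may choose, for every $\varepsilon>0$, a positive symmetric finitely supported $g$ with $\|\lambda(g)\|<\varepsilon\|g\|_1$; passing to convolution powers $g^{*n}$ preserves positivity and symmetry while making $\|\lambda(g^{*n})\|\le \|\lambda(g)\|^n$ decay exponentially faster than $\|g^{*n}\|_1 = \|g\|_1^n$. Combined with a Varopoulos-type bound of the form $\|f\|_{T_2}^2 \lesssim \|f\|_1\|\lambda(f)\|$ for positive symmetric $f$, this forces $\|g^{*n}\|_{T_2}$ to be much smaller than $\|g^{*n}\|_{\ell^2}$ for large $n$, contradicting the inequality displayed above. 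The main obstacle is establishing this Varopoulos-type estimate and arranging the pieces so that non-amenability genuinely forces the $\ell^2$--$T_2$ comparison to fail; this is where Bo\.zejko's original insight must be transposed into the symmetric setting, and it is the step that most crucially exploits the Littlewood-kernel apparatus developed in the previous section.
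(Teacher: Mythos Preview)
Your overall strategy is Bo\.zejko's, and your reduction to the inequality $\|\phi\|_{\ell^2}\le K''\|\phi\|_{T_2}$ for finitely supported symmetric $\phi$ is essentially correct and matches the paper's argument (the paper phrases it as showing $T_2(G)_\sym=\ell^2(G)_\sym$ with equivalent norms, via cotype~2 of $B_2(G)_\sym=B(G)_\sym$). The gap is in your final step.

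The estimate you propose, $\|f\|_{T_2}^2\lesssim\|f\|_1\|\lambda(f)\|$, combined with convolution powers $g^{*n}$, does \emph{not} yield the contradiction you claim. For a positive symmetric $g$ on a non-amenable group one has $\|g^{*n}\|_{\ell^2}^2=\langle\lambda(g)^{2n}\delta_e,\delta_e\rangle\le\|\lambda(g)\|^{2n}$ (and in fact this is the correct order of growth), while your upper bound on $\|g^{*n}\|_{T_2}^2$ is $\|g\|_1^n\|\lambda(g)\|^n$. The ratio $\|g^{*n}\|_{\ell^2}^2/\|g^{*n}\|_{T_2}^2$ is therefore at most of order $(\|\lambda(g)\|/\|g\|_1)^n$, which tends to $0$, not to $\infty$; so no violation of $\|\phi\|_{\ell^2}\le K''\|\phi\|_{T_2}$ follows.

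The correct Varopoulos-type bound, and the one the paper uses, involves the \emph{pointwise} square: a direct computation with the $t_2$-norm gives
\[
\|f\|_{T_2}^2=\sup_{|F_1|=|F_2|}\frac{1}{|F_1|}\langle\lambda(|f|^2)\chi_{F_2},\chi_{F_1}\rangle\le\|\lambda(|f|^2)\|.
\]
Combining this with $\|f\|_{\ell^2}\le K''\|f\|_{T_2}$ and then substituting $g=|f|^2$ (so that $\|f\|_{\ell^2}^2=\|g\|_1$) yields $\|g\|_1\le C\|\lambda(g)\|$ for every positive symmetric $g\in\C[G]$, which is precisely what Corollary~\ref{cor:non-amenable} rules out for non-amenable $G$. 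The key point you missed is that the passage from the $\ell^2$--$T_2$ comparison to Kesten-type spectral information goes through the pointwise square, not through convolution powers.
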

\begin{proof}
For the implication (1)$\implies$(2) we refer to Theorem 1 in \cite{MR0293019}. The implication (2)$\implies$(3) is trivial. So we prove (3)$\implies$(1), and we do this by adapting Bo\.{z}ejko's proof of (2)$\implies$(1).

Since $B(G)$ may be identified with the dual of the full group \Cs-algebra of $G$, it is of cotype 2. Being of cotype 2 obviously passes to (closed) subspaces, so $B(G)_\sym$ is of cotype 2. By assumption $B_2(G)_\sym = B(G)_\sym$, and because the two spaces have equivalent norms, $B_2(G)_\sym$ is also of cotype 2.

Now we show that
$$
M(\ell^\infty(G),B_2(G))_\sym \subseteq \ell^2(G).
$$
Suppose $g\in M(\ell^\infty(G),B_2(G))_\sym$ and write $g$ in the form
$$
g = \sum_{n=1}^\infty a_n (\delta_{x_n} + \delta_{x_n^{-1}})
$$
with no repetitions among the sets $\{x_n,x_n^{-1}\}_{n=1}^\infty$. For each $t\in[0,1]$ and $N\in\N$ the function
$$
g_{t,N} = \sum_{n=1}^N a_n r_n(t) (\delta_{x_n} + \delta_{x_n^{-1}})
$$
lies in $B_2(G)_\sym$ and $\|g_{t,N} \|_{B_2} \leq \|g\|_{M(\ell^\infty,B_2)}$. Using that $B_2(G)_\sym$ has cotype 2 we get
$$
C \left( \sum_{n=1}^N |a_n|^2 \right)^{1/2} \leq \int_0^1 \left\| \sum_{n=1}^N r_n(t) a_n (\delta_{x_n} + \delta_{x_n^{-1}}) \right\|_{B_2}  dt \leq \| g\|_{M(\ell^\infty,B_2)}
$$
for any $N\in\N$, so $g\in \ell^2(G)$.

Now, consider the set $T^2(G)$ of Littlewood functions. As noted in Lemma~\ref{lem:littlewood-multiplier},
$$
T_2(G) \subseteq M(\ell^\infty(G), B_2(G)),
$$
so it follows that $T_2(G)_\sym \subseteq \ell^2(G)$. Conversely, the inclusion $\ell^2(G) \subseteq T_2(G)$ is trivial, so we must have $T_2(G)_\sym = \ell^2(G)_\sym$.

Let $\tilde f(x) = \bar{f(x^{-1})}$. It is easy to check that
$$
\la \lambda(f) x,y \ra = \la f, y * \tilde x \ra \qquad \text{for all } f,x,y\in \C[G],
$$
Hence for any symmetric $f\in \C[G]$ we have
\begin{align*}
\| f \|_{T_2}^2 &= \sup_{|F_1| = |F_2| < \infty} \left\{ \frac{1}{|F_1|} \la |f|^2 , \chi_{F_1} * \tilde{\chi}_{F_2} \ra \right\} \\
 &= \sup_{|F_1| = |F_2| < \infty} \left\{ \frac{1}{|F_1|} \la \lambda(|f|^2) \chi_{F_2}, \chi_{F_1}\ra \right\} \leq \| \lambda(|f|^2) \|.
\end{align*}
Since $T_2(G)_\sym = \ell^2(G)_\sym$, and these spaces have equivalent norms, we get
$$
\| |f| \|_{\ell^2}^2 \leq C' \| \lambda(|f|^2) \|  \qquad \text{for all } f\in \C[G]_\sym
$$
for some constant $C'$. This implies that
$$
\| g\|_{\ell^1} \leq C'' \| \lambda(g) \|
$$
for any positive, symmetric function $g\in \C[G]$ and some constant $C''$. Corollary~\ref{cor:non-amenable} yields that $G$ must be amenable.
\end{proof}

\begin{lem}
Let $G$ be a group, and $\psi:G\to\R$ a conditionally negative definite function. If $\psi$ is bounded, then $\psi = c - \phi$ for some constant $c\in\R$ and some positive definite function $\phi:G\to\R$.
\end{lem}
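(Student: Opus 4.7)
The plan is to represent $\psi$ via the standard affine isometric action on a real Hilbert space and to exploit boundedness to produce a fixed point of this action. Since conditional negative definiteness is preserved by adding real constants, I would first replace $\psi$ with $\psi - \psi(e)$ and assume $\psi(e) = 0$; the boundedness of $\psi$ is preserved. The kernel $(x,y)\mapsto\psi(y^{-1}x)$ is then real, symmetric, conditionally negative definite, and vanishes on the diagonal, so by the structure theorem recalled in Section~\ref{sec:prelim} there is a real Hilbert space $H$ and a map $a:G\to H$ with
$$\psi(y^{-1}x) = \|a(x)-a(y)\|^2.$$
Normalising $a(e)=0$ gives $\psi(g) = \|a(g)\|^2$, and left $G$-invariance of the kernel, $\|a(gx)-a(gy)\|=\|a(x)-a(y)\|$, promotes $a$ to a $1$-cocycle $b = a$ for an orthogonal representation $\pi$ of $G$ on $H$; equivalently, $\tau_g(v):=\pi(g)v+b(g)$ is an affine isometric $G$-action on $H$.

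Since $\psi$ is bounded, so is the orbit $B=\{b(g):g\in G\}$, and the cocycle identity $b(hg)=b(h)+\pi(h)b(g)$ shows that $\tau_h$ permutes $B$ for every $h$. I would then invoke the Chebyshev centre of $B$: the function $f(\eta)=\sup_{v\in B}\|v-\eta\|^2$ is coercive and weakly lower semicontinuous on $H$, hence attains its infimum, and the parallelogram identity yields
$$f\!\left(\tfrac{\eta_1+\eta_2}{2}\right) \leq \tfrac12 f(\eta_1)+\tfrac12 f(\eta_2) - \tfrac14\|\eta_1-\eta_2\|^2,$$
so the minimiser $\xi\in H$ is unique. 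Because each $\tau_h$ is an affine isometry permuting $B$, one checks $f(\tau_h\xi)=f(\xi)$, and uniqueness forces $\tau_h\xi=\xi$, i.e.\ $b(h) = \xi-\pi(h)\xi$.

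Expanding gives
$$\psi(h) = \|b(h)\|^2 = \|\xi-\pi(h)\xi\|^2 = 2\|\xi\|^2 - 2\langle \pi(h)\xi,\xi\rangle,$$
and restoring the constant subtracted at the start yields $\psi=c-\phi$ with $c=\psi(e)+2\|\xi\|^2\in\R$ and $\phi(h)=2\langle\pi(h)\xi,\xi\rangle$, which is a real positive definite function on $G$. The only non-routine step is the fixed-point argument in the middle paragraph; its content is the classical principle that a bounded $1$-cocycle for an orthogonal representation is a coboundary, and everything else is purely computational.
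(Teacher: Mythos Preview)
Your proof is correct and follows essentially the same route as the paper: normalize so that $\psi(e)=0$, realize $\psi$ via a $1$-cocycle for an orthogonal representation, use boundedness to conclude the cocycle is a coboundary, and expand. The only difference is cosmetic: the paper cites the ``bounded cocycle $\Rightarrow$ coboundary'' fact as \cite[Lemma~D.10]{BO}, whereas you supply an in-line proof via the Chebyshev centre (Bruhat--Tits fixed point) argument.
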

\begin{proof}
Without loss of generality we may assume $\psi(e) = 0$. It is then well-known that $\psi$ has the form $\psi(y^{-1}x) = \| \sigma(x) - \sigma(y) \|^2$ for some 1-cocycle $\sigma:G\to H$ with coefficients in an orthogonal representation $\pi:G\to O(H)$, where $H$ is a real Hilbert space. Since $\psi$ is bounded, so is $\sigma$. Any bounded 1-cocycle is a 1-coboundary (see \cite[Lemma D.10]{BO}). Hence there is $\xi\in H$ such that $\sigma(x) = \xi - \pi(x)\xi$ for every $x\in G$. Then
$$
\psi(y^{-1}x) = \| \sigma(x) - \sigma(y) \|^2 = 2||\xi||^2 - 2 \la \pi(y^{-1}x)\xi,\xi \ra.
$$
Now, put $c = 2\|\xi\|^2$ and $\phi(x) = 2\la \pi(x)\xi,\xi\ra$.
\end{proof}

We are now ready to prove the other direction of Theorem~\ref{thm:amenable-case}.

\begin{thm}\label{prop:non-amenable}
Let $G$ be a countable, discrete group. Suppose every symmetric function $\phi:G\to\R$ such that $\| e^{-t\phi}\|_\HS \leq 1$ for every $t>0$ splits as
$$
\phi(x) = \psi(x) + ||\xi||^2 + \la\pi(x)\xi,\xi\ra  \qquad (x\in G),
$$
where 
\begin{itemize}
\renewcommand{\labelitemi}{$\cdot$}
	\item $\psi$ is a conditionally negative definite function on $G$,
	\item $\pi$ is an orthogonal representation of $G$ on some real Hilbert space $H$,
	\item and $\xi$ is a vector in $H$.
\end{itemize}
Then $G$ is amenable.
\end{thm}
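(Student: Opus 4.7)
The plan is to prove the contrapositive. Assuming $G$ is not amenable, I would construct a real symmetric function $\phi:G\to\R$ satisfying $\|e^{-t\phi}\|_{B_2}\leq 1$ for every $t>0$ that does \emph{not} admit the claimed splitting.

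Since $G$ is non-amenable, Proposition~\ref{lem:bozejko} produces some $f\in B_2(G)_\sym\setminus B(G)_\sym$. Splitting $f$ into its real and imaginary parts---both of which lie in $B_2(G)_\sym$, and both of which lie in $B(G)_\sym$ iff $f$ does---I may assume $f$ is real-valued. I then set $\phi = f + \|f\|_{B_2}$, which is real, symmetric, and bounded (since $|f|\le \|f\|_{B_2}$). Because $B_2(G)$ is a Banach algebra, the power series for $e^{-tf}$ gives $\|e^{-tf}\|_{B_2}\leq e^{t\|f\|_{B_2}}$, so
$$
\|e^{-t\phi}\|_{B_2} \;\leq\; e^{-t\|f\|_{B_2}}\cdot e^{t\|f\|_{B_2}} \;=\; 1 \qquad (t>0).
$$

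Suppose for contradiction that $\phi$ admits the hypothesised splitting $\phi(x)=\psi(x)+\|\xi\|^2+\la\pi(x)\xi,\xi\ra$ with $\psi$ conditionally negative definite. The diagonal coefficient $x\mapsto\la\pi(x)\xi,\xi\ra$ is real, bounded, and symmetric (the last of these because $\pi$ is orthogonal on a real Hilbert space), and $\phi$ itself is real, bounded, and symmetric, so the CND function $\psi$ inherits the same three properties. The lemma immediately preceding the statement then yields a constant $c'\in\R$ and a (necessarily symmetric) positive definite function $u':G\to\R$ with $\psi = c' - u'$, whence
$$
\phi \;=\; (c'+\|\xi\|^2) + \la\pi(\cdot)\xi,\xi\ra - u' \;\in\; B(G)_\sym,
$$
because the right-hand side is a linear combination of positive definite functions (plus a constant). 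But then $f = \phi - \|f\|_{B_2}\in B(G)_\sym$, contradicting the choice of $f$.

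The core idea is the shift trick $\phi = f+\|f\|_{B_2}$, which promotes an arbitrary symmetric Herz-Schur multiplier into a function satisfying the semigroup norm bound. The only real care required is symmetry/reality bookkeeping: one must keep all intermediate functions real-valued and symmetric so that the bounded CND $\psi$ landing in the splitting can be converted into a $B(G)_\sym$-expression via the preceding lemma. This is precisely what forces the reduction to a real-valued $f$ at the very first step, and I do not anticipate any further obstacle.
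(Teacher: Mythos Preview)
Your argument is correct and follows essentially the same route as the paper's proof: both use the shift trick (the paper writes $\phi=1-\rho$ after normalizing $\|\rho\|_{B_2}=1$, you write $\phi=f+\|f\|_{B_2}$), both observe that the resulting bounded $\psi$ in the splitting must have the form constant minus positive definite by the preceding lemma, and both conclude via Proposition~\ref{lem:bozejko}. The only cosmetic difference is that you phrase it contrapositively and make the reduction to real-valued $f$ explicit, whereas the paper argues directly and leaves the passage from real symmetric to all symmetric Herz--Schur multipliers implicit.
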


\begin{proof}

It is always the case that $B(G) \subseteq B_2(G)$. Suppose $\rho\in B_2(G)$ is real, symmetric, with $\|\rho\|_\HS = 1$. If we put $\phi = 1 - \rho$, then
$$
\| e^{-t\phi} \|_\HS = e^{-t} \| e^{t\rho} \|_\HS \leq e^{-t} e^{t\| \rho \|_\HS } = 1.
$$
By our assumption we have a splitting
$$
\phi(x) = \psi(x) + ||\xi||^2 + \la\pi(x)\xi,\xi\ra  \qquad (x\in G).
$$
Obviously, $\rho$ is bounded, and it follows that $\psi$ is bounded. By the previous lemma there is some positive definite function $\phi'$ on $G$ and a constant $c\in\R$ such that $\psi = c - \phi'$. Hence $\psi\in B(G)$. From this we get that $\phi \in B(G)$, so $\rho \in B(G)$.

It now follows that $B_2(G)_\sym \subseteq B(G)$, so $B_2(G)_\sym = B(G)_\sym$. From Proposition~\ref{lem:bozejko} we conclude that $G$ is amenable.
\end{proof}

\section{Radial semigroups of Herz-Schur multipliers on \texorpdfstring{$\F_\infty$}{F-infty}}\label{sec:infinite}
We now change the focus of Problem~\ref{prob:2} to the particular case where the group in question is a free group. We briefly recall Problem~\ref{prob:2}. Suppose $\phi:G\to\R$ is symmetric and $\|e^{-t\phi}\|_{B_2} \leq 1$ for every $t>0$. Is it then possible to find a conditionally negative definite function $\psi:G\to\R$ such that $\phi\leq\psi$. In the case of radial functions on free groups we provide a positive solution to the problem (see Theorem~\ref{thm:main-finite}).

Let $\N_0 = \{0,1,2,\ldots\}$, and let $\sigma:\N_0^2 \to \N_0^2$ denote the \emph{shift map} given as $\sigma(m,n) = (m+1,n+1)$. Let $\F_n$ denote the free group on $n$ generators, where $2\leq n\leq \infty$. We use $|x|$ to denote the word length of $x\in\F_n$.

\begin{defi}\label{defi:radial}
A function $\phi:\F_n \to\C$ is called \emph{radial} if there is a (necessarily unique) function $\dot\phi:\N_0\to \C$ such that $\phi(x) = \dot\phi(|x|)$ for all $x\in\F_n$, i.e., if the value $\phi(x)$ only depends on $|x|$.

A function $\phi:\N_0\times\N_0\to\C$ is called a \emph{Hankel function} if the value $\phi(m,n)$ only depends on $m+n$.

Given a radial function $\phi$ on $\F_n$, we let $\tilde\phi$ be the kernel on $\N_0$ defined by $\tilde\phi(m,n) = \dot\phi(m+n)$. Note that $\tilde\phi$ is a Hankel function.

\end{defi}

Actually, the free groups will not enter the picture before Theorem~\ref{thm:HSS}. Until then we will simply study properties of kernels on $\N_0$.

\subsection{Functionals on the Toeplitz algebra}

Let $S$ be the unilateral shift operator on $\ell^2(\N_0)$. The \Cs-algebra $C^*(S)$ generated by $S$ is the Toeplitz algebra. Since $S^*S = I$, the set
\begin{align}\label{eq:D}
D = \mathrm{span} \{S^k(S^*)^l \mid k,l\in\N_0\}
\end{align}
is a $^*$-algebra, and its closure is $C^*(S)$. The Toeplitz algebra fits in the exact sequence
\begin{align}\label{eq:extension}
\xymatrix{
0 \ar[r] & \mc K \ar[r] & C^*(S) \ar[r]^\pi & C(\mb T) \ar[r] & 0,
}
\end{align}
where $\mc K$ denotes the \Cs-algebra of compact operators (on $\ell^2(\N_0)$), $C(\mb T)$ is the \Cs-algebra of continuous functions on the unit circle $\mb T$, and $\pi$ is the quotient map that maps $S$ to the generating unitary $\id_{\mb T}$.

When $\phi:\N_0^2\to\C$ is a kernel we let $\omega_\phi$ denote the linear functional defined on $D$ by
\begin{align}\label{eq:omega}
\omega_\phi(S^m(S^*)^n) = \phi(m,n).
\end{align}
It may or may not happen that $\omega_\phi$ extends to a bounded functional on $C^*(S)$. If it does, we also denote the extension by $\omega_\phi$. Along the same lines we consider the linear map $M_\phi$ defined on $D$ by
$$
M_\phi(S^m(S^*)^n) = \phi(m,n)S^m(S^*)^n,
$$
and if it extends to a bounded linear map on $C^*(S)$, we also denote the extension by $M_\phi$. We call it the \emph{multiplier} of $\phi$.

\begin{rem}\label{rem:ucp}
Consider the \Cs-algebra $C^*(S\tensor S)$ generated by the operator $S\tensor S$ inside $B(\ell^2(\N_0)\tensor\ell^2(\N_0))$. Since the operator $S\tensor S$ is a proper isometry, it follows from Coburn's Theorem (see \cite[Theorem 3.5.18]{Murphy}) that there is a $^*$-isomorphism $\alpha:C^*(S)\to C^*(S\tensor S)$ such that $\alpha(S) = S\tensor S$. Let $\pi$ be the quotient map $C^*(S)\to C(\mb T)$ from before and let $\ev_1:C(\mb T)\to\C$ be evaluation at $1\in\mb T$. Then we note that $\omega_\phi = \ev_1\circ\pi\circ M_\phi$, while $M_\phi = (\id_{C^*(S)}\tensor \omega_\phi)\circ \alpha$, where we have identified $C^*(S)\tensor\C$ with $C^*(S)$.

It follows from the mentioned relation between $\omega_\phi$ and $M_\phi$ that $\omega_\phi$ extends to $C^*(S)$ if and only if $M_\phi$ extends to $C^*(S)$. If this is the case, then $M_\phi$ is even completely bounded, since bounded functionals and $^*$-homomorphisms are always completely bounded. Similarly, $\omega_\phi$ is positive if and only if $M_\phi$ is positive if and only if $M_\phi$ is \emph{completely} positive. Finally, $||M_\phi|| = ||\omega_\phi||$.
\end{rem}

\subsection{Positive and conditionally negative functions}
The following proposition characterizes the functions $\phi$ that induce states on the Toeplitz algebra.

\begin{prop}\label{prop:thm1}
Let $\phi:\N_0\times\N_0\to\C$ be a function. The following are equivalent.
\begin{enumerate}
	\item The functional $\omega_\phi$ extends to a state on the Toeplitz algebra $C^*(S)$.
	\item The multiplier $M_\phi$ extends to a u.c.p. map on the Toeplitz algebra $C^*(S)$.
	\item There exist a positive trace class operator $h = [h(i,j)]_{i,j=0}^\infty$ on $\ell^2(\N_0)$ and a positive definite function $\phi_0:\Z\to\C$ such that 
	$$
	\phi(k,l) = \sum_{i=0}^\infty h(k+i,l+i) + \phi_0(k-l)
	$$
	for all $(k,l)\in\N_0\times\N_0$ and $\phi(0,0) = 1$.
	\item $\phi$ is a positive definite kernel with $\phi(0,0) = 1$ and $\phi - \phi\circ\sigma$ is a positive definite kernel.
\end{enumerate}
\end{prop}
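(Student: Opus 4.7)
The strategy is to observe that (1) $\iff$ (2) is exactly the content of Remark~\ref{rem:ucp}, and then to close the cycle (1) $\implies$ (4) $\implies$ (3) $\implies$ (1). The first equivalence requires no extra work: the remark explicitly identifies positivity of $\omega_\phi$ with positivity (hence complete positivity) of $M_\phi$, and the normalization $\phi(0,0)=1$ is just $\omega_\phi(I) = M_\phi(I) = 1$.

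For (1) $\implies$ (4), I would use the key identity $I - SS^* = P_0$, the rank-one projection onto $\C e_0$. This gives $S^k(S^*)^l - S^{k+1}(S^*)^{l+1} = S^k(I - SS^*)(S^*)^l = e_k e_l^*$, so applying $\omega_\phi$ yields $(\phi - \phi\circ\sigma)(k,l) = \omega_\phi(e_k e_l^*)$. Positive definiteness of this kernel then follows from $\sum_{i,j} c_i \bar{c_j}\, \omega_\phi(e_{k_i} e_{k_j}^*) = \omega_\phi(vv^*) \geq 0$ with $v = \sum_i c_i e_{k_i}$. Positive definiteness of $\phi$ itself comes from $\sum_{i,j} c_i \bar{c_j}\, \phi(k_i, k_j) = \omega_\phi(TT^*) \geq 0$ where $T = \sum_i c_i S^{k_i}$, and $\phi(0,0) = \omega_\phi(I) = 1$.

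For (4) $\implies$ (3), set $\psi = \phi - \phi\circ\sigma$, which is positive definite by assumption. Telescoping on the diagonal gives $\sum_{m=0}^N \psi(m,m) = \phi(0,0) - \phi(N{+}1, N{+}1) \leq 1$, so $\psi$ defines a positive trace class operator $h$ on $\ell^2(\N_0)$ with matrix $h(m,n) = \psi(m,n)$. The Cauchy--Schwarz bound $|\psi(m,n)|^2 \leq \psi(m,m)\psi(n,n)$ for positive definite kernels makes the telescoping series $\sum_{i=0}^\infty \psi(k+i, l+i)$ absolutely convergent, so $\phi_\infty := \lim_N \phi\circ\sigma^N$ exists pointwise. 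Being positive definite and shift-invariant, it has the form $\phi_\infty(k,l) = \phi_0(k-l)$ for some positive definite $\phi_0$ on $\Z$, and the telescoping sum then gives the decomposition in (3).

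For (3) $\implies$ (1), Bochner's theorem produces a positive finite measure $\mu$ on $\mb T$ with $\phi_0(j) = \int z^j\, d\mu(z)$, and I would define $\omega(T) = \Tr(hT) + \int_{\mb T} \pi(T)\, d\mu$ on $C^*(S)$. This is positive as a sum of two positive functionals with $\omega(I) = \Tr(h) + \mu(\mb T) = \phi(0,0) = 1$, and a direct computation of $\omega(S^m(S^*)^n)$ recovers $\phi(m,n)$. The main obstacle is bookkeeping: keeping matrix/trace conventions aligned in that last computation (the expansion of $\Tr(hS^m(S^*)^n)$ runs along a shifted diagonal and must be matched against the sum in (3)) and rigorously justifying the absolute convergence in the telescoping argument. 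Apart from these details, the proof is a dictionary between states on $C^*(S)$ organized by the exact sequence $0 \to \mc K \to C^*(S) \to C(\mb T) \to 0$ and positive definite kernels on $\N_0 \times \N_0$ satisfying the shift-compatibility condition in (4).
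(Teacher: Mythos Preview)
Your plan is correct and follows the paper's proof essentially line for line: the same cycle $(1)\Leftrightarrow(2)$ via Remark~\ref{rem:ucp}, then $(1)\Rightarrow(4)\Rightarrow(3)\Rightarrow(1)$, with the same use of $S^k(S^*)^l - S^{k+1}(S^*)^{l+1} = e_{kl}$, the same telescoping and Cauchy--Schwarz argument for $(4)\Rightarrow(3)$, and the same splitting of the state along the extension $0\to\mc K\to C^*(S)\to C(\mb T)\to 0$ for $(3)\Rightarrow(1)$. The one bookkeeping point you already flagged is real: with the standard conventions $\Tr(hS^m(S^*)^n)=\sum_i h(n{+}i,m{+}i)$, so to recover $\sum_i h(m{+}i,n{+}i)$ you must use $\Tr(h^t\,\cdot\,)$ rather than $\Tr(h\,\cdot\,)$, exactly as the paper does.
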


\begin{proof}
The equivalence (1)$\iff$(2) follows from Remark~\ref{rem:ucp}.

The rest of the proof goes as follows: (1)$\implies$(4)$\implies$(3)$\implies$(1).

(1)$\implies$(4): Given complex numbers $c_0,\ldots,c_n$, we see that
$$
\omega_\phi\left( \left(\sum_{k=0}^n c_k S^k \right) \left(\sum_{l=0}^n c_l S^l \right)^* \right) = \sum_{k,l=0}^n c_k\overline c_l \phi(k,l),
$$
so $\phi$ is positive definite, since $\omega_\phi$ is a positive functional. If we let $(e_{kl})_{k,l=0}^\infty$ denote the standard matrix units in $B(\ell^2(\N_0))$, then
$$
e_{kl} = S^k(S^*)^l - S^{k+1}(S^*)^{l+1},
$$
and so
$$
\phi(k,l) - \phi(k+1,l+1) = \omega_\phi(S^k(S^*)^l - S^{k+1}(S^*)^{l+1}) = \omega_\phi(e_{kl}).
$$
It follows that
$$
0 \leq \omega_\phi\left( \left(\sum_{k=0}^n c_k e_{k0} \right) \left(\sum_{l=0}^n c_l e_{l0} \right)^* \right) = \sum_{k,l=0}^n c_k\overline c_l(\phi(k,l) - \phi(k+1,l+1)),
$$
so $\phi - \phi\circ\sigma$ is positive definite. Finally, $\phi(0,0) = \omega_\phi(1) = 1$.

(4)$\implies$(3): Since $\phi - \phi\circ\sigma$ is positive definite, $\phi(0,0) \geq \phi(1,1) \geq \cdots$, and since $\phi$ is positive definite, $\phi(k,k) \geq 0$ for every $k$. Hence $\lim_k \phi(k,k)$ exists. Let $h = \phi - \phi\circ\sigma$. Then $h$ is positive definite, and
$$
\sum_{k=0}^\infty h(k,k) = \sum_{k=0}^\infty (\phi - \phi\circ\sigma)(k,k) = \phi(0,0) - \lim_{k\to\infty} \phi(k,k) < \infty.
$$
By the Cauchy-Schwarz inequality,
$$
\sum_{i=0}^\infty |h(k+i,l+i)| \leq \sum_{i=0}^\infty \sqrt{h(k+i,k+i)}\sqrt{h(l+i,l+i)} < \infty,
$$
so $\lim_i\phi(k+i,l+i)$ exists for every $k,l$ and depends of course only on $k-l$. Define $\phi_0(k-l) = \lim_i\phi(k+i,l+i)$. Since $\phi$ is positive definite, it follows that $\phi\circ\sigma^i$ is positive definite, so the limit $\phi_0$ is as well. Finally note that
$$
\phi(k,l) = \sum_{i=0}^\infty h(k+i,l+i) + \phi_0(k-l).
$$

(3)$\implies$(1): Let $\omega_1$ be the functional on $B(\ell^2(\N_0))$ given by $\omega_1(x) = \Tr(h^tx)$, where $h^t$ denotes the transpose of $h$. Since $h$ is positive, this is a positive, normal, linear functional. Note that $\omega_1(e_{kl}) = \Tr(e_{0l} h^t e_{k0}) = h^t(l,k) = h(k,l)$, so that
$$
\omega_1(S^k(S^*)^l) = \omega_1(e_{kl} + e_{k+1,l+1} + \cdots ) = \sum_{i=0}^\infty h(k+i,l+i).
$$

The positive definite function $\phi_0:\Z\to\R$ corresponds to a positive functional $\omega_0$ on $C(\mb T)$ given by $\omega_0(z^{k-l}) = \phi_0(k-l)$, where $z$ denotes the standard unitary generator of $C(\mb T)$. Letting $\pi:C^*(S) \to C(\mb T)$ be the quotient map as usual, we see that $\omega = \omega_1 + \omega_0\circ\pi$ is a positive linear functional on $C^*(S)$ with
$$
\omega(S^k(S^*)^l) = \sum_{i=0}^\infty h(k+i,l+i) + \phi_0(k-l) = \phi(k,l).
$$
Hence $\omega_\phi = \omega$ is the desired positive functional on $C^*(S)$. Since $\omega(1) = \phi(0,0) = 1$, it is a state.
\end{proof}

\begin{cor}\label{cor:4}
Let $\theta:\N_0\times\N_0 \to \C$ be a function. The conditions {\us(1)} and {\us(2)} below are equivalent. Moreover, {\us(1)} implies {\us(3)}.
\begin{enumerate}
	\item The function $\theta - \frac12\theta(0,0)$ is positive definite, and $\theta - \theta\circ\sigma$ is positive definite.
	\item There exist a Hilbert space $H$ with vectors $\xi_i\in H$ such that $\sum_{i=0}^\infty || \xi_i||^2 < \infty$ and a positive definite function $\theta_0:\Z\to\C$ such that $\theta$ is given as
	$$
	\theta(k,l) = \sum_{i=0}^\infty ||\xi_i||^2 + \sum_{i=0}^\infty \la \xi_{k+i},\xi_{l+i} \ra + \theta_0(0) + \theta_0(k-l), \quad k,l\in\N_0.
	$$
	\item For all $t> 0$ we have $||M_{e^{-t\theta}}|| \leq 1$.
\end{enumerate}
\end{cor}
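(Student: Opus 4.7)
My plan is to derive both parts of the corollary from Proposition~\ref{prop:thm1} via a normalization. Set $c = \theta(0,0)/2$ and $\phi = \theta - c$; condition (1) then asserts precisely that $\phi$ is positive definite with $\phi(0,0) = c$ and that $\phi - \phi\circ\sigma = \theta - \theta\circ\sigma$ is positive definite. The degenerate case $c = 0$ forces $\theta \equiv 0$ (the shift-monotonicity of the diagonal together with positivity drives all $\theta(n,n)$ to $0$, after which Cauchy--Schwarz kills the off-diagonal), rendering (2) and (3) trivial, so I assume $c > 0$ throughout. The rescaled kernel $\tilde\phi = \phi/c$ then satisfies the hypotheses of Proposition~\ref{prop:thm1}(4) verbatim.

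For the equivalence (1)$\iff$(2), I would apply Proposition~\ref{prop:thm1}(4)$\iff$(3) to $\tilde\phi$, yielding a decomposition $\tilde\phi(k,l) = \sum_i \tilde h(k+i,l+i) + \tilde\phi_0(k-l)$ with $\tilde h$ positive trace class on $\ell^2(\N_0)$ and $\tilde\phi_0$ positive definite on $\Z$. Factoring the positive trace class operator $c\tilde h$ through its Hilbert--Schmidt square root produces vectors $(\xi_i)$ with $c\tilde h(i,j) = \la\xi_i,\xi_j\ra$ and $\sum\|\xi_i\|^2 = \Tr(c\tilde h) < \infty$; setting $\theta_0 = c\tilde\phi_0$ and unpacking $\theta = c + c\tilde\phi$ reproduces the expression in (2), using the identity $c = \sum\|\xi_i\|^2 + \theta_0(0)$. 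The reverse implication (2)$\Rightarrow$(1) is by direct inspection: subtracting $\theta(0,0)/2$ cancels the two leading constants and leaves the manifestly positive definite kernel $\sum\la\xi_{k+i},\xi_{l+i}\ra + \theta_0(k-l)$, while $\theta - \theta\circ\sigma$ telescopes to $\la\xi_k,\xi_l\ra$.

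For (1)$\Rightarrow$(3), the crucial observation is that Proposition~\ref{prop:thm1}(4)$\Rightarrow$(2) promotes $M_{\tilde\phi}$ to a u.c.p.\ map on $C^*(S)$; rescaling, $M_\phi = cM_{\tilde\phi}$ is completely positive on $C^*(S)$ with $\|M_\phi\| = M_\phi(1) = c$. Because $M_\phi$ is bounded and satisfies $M_\phi^n = M_{\phi^n}$ on the dense subalgebra $D$ (both send $S^k(S^*)^l$ to $\phi(k,l)^n S^k(S^*)^l$), the operator exponential $e^{-tM_\phi}$ converges in norm and coincides with $M_{e^{-t\phi}}$ on $D$, hence on all of $C^*(S)$; in particular $\|M_{e^{-t\phi}}\| \leq e^{t\|M_\phi\|} = e^{tc}$. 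Combined with the scalar identity $e^{-t\theta} = e^{-tc}\cdot e^{-t\phi}$, this yields $\|M_{e^{-t\theta}}\| = e^{-tc}\|M_{e^{-t\phi}}\| \leq 1$. I anticipate the main (mild) technical subtlety to be the justification of the identity $M_{e^{-t\phi}} = e^{-tM_\phi}$ as bounded operators, which however follows routinely from agreement on generators together with the density of $D$ in $C^*(S)$.
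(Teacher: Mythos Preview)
Your proposal is correct and follows essentially the same approach as the paper: set $\phi = \theta - \tfrac12\theta(0,0)$, reduce (1)$\iff$(2) to Proposition~\ref{prop:thm1}, and obtain (3) from the chain $\|M_{e^{-t\theta}}\| = e^{-t\phi(0,0)}\|M_{e^{-t\phi}}\| \leq e^{-t\phi(0,0)}e^{t\|M_\phi\|} = 1$ using $\|M_\phi\| = \phi(0,0)$. Your treatment is in fact more careful than the paper's terse version, since you explicitly rescale to meet the normalization $\phi(0,0)=1$ in Proposition~\ref{prop:thm1}, dispose of the degenerate case $\theta(0,0)=0$, and justify the exponential inequality via $M_{e^{-t\phi}} = e^{-tM_\phi}$ on $D$.
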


\begin{proof}
Let $\phi = \theta - \frac12\theta(0,0)$, and observe that $\theta - \theta\circ\sigma = \phi - \phi\circ\sigma$.
The equivalence (1)$\iff$(2) follows easily from Proposition~\ref{prop:thm1} applied to $\phi$.
For the norm estimate in (3) (assuming (1)) we use
$$
|| M_{e^{-t\theta}} || = e^{-t\phi(0,0)} || M_{e^{-t\phi}} || \leq e^{-t\phi(0,0)} e^{t || M_{\phi} || } = 1,
$$
where we used Proposition~\ref{prop:thm1} to get $||M_{\phi}|| = \phi(0,0)$.

\end{proof}

Our next goal is to characterize the functions $\psi:\N_0\times\N_0\to\C$ that generate semigroups $(e^{-t\psi})_{t>0}$ so that each $e^{-t\psi}$ induces a state on the Toeplitz algebra. With Schoenberg's Theorem in mind, the result in Proposition~\ref{prop:thm2} is not surprising. But first we prove a lemma.

\begin{lem}\label{lem:3}
Let $\psi:\N_0\times\N_0\to\C$ be a function. Suppose $\psi$ is a conditionally negative definite kernel, and $\psi\circ\sigma - \psi$ is a positive definite kernel. Then there exist a Hilbert space $H$, a sequence of vectors $(\eta_i)_{i=0}^\infty$ in $H$ such that for every $m,n\in\N_0$
	$$
	\sum_{k=0}^\infty ||\eta_{m+k} - \eta_{n+k}||^2 < \infty,
	$$
	and $(\psi\circ\sigma - \psi)(m,n) = \la\eta_m,\eta_n\ra$.
\end{lem}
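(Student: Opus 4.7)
The plan is to first produce the vectors $\eta_i$ by a standard Kolmogorov/GNS-type construction applied to the positive definite kernel $\phi = \psi\circ\sigma - \psi$, and then to prove summability by a telescoping argument combined with the conditional negative definiteness of $\psi$.

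First, since $\phi = \psi\circ\sigma - \psi$ is positive definite on $\N_0$, there exist a Hilbert space $H$ and vectors $\eta_i \in H$, $i\in\N_0$, such that
$$
\phi(m,n) = \la \eta_m,\eta_n\ra \qquad (m,n\in\N_0).
$$
(Take $H$ to be the completion of the pre-Hilbert space built from $\C[\N_0]$ modulo the null space, exactly as in the text's discussion of positive definite kernels right after Schoenberg's theorem.) This immediately gives the second conclusion of the lemma, so it remains only to establish the summability estimate.

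Expanding the norm squared and using the identity $\phi(a,b) = \psi(a+1,b+1) - \psi(a,b)$, I would compute
$$
\|\eta_{m+k}-\eta_{n+k}\|^2 = \phi(m+k,m+k) - \phi(m+k,n+k) - \phi(n+k,m+k) + \phi(n+k,n+k).
$$
Each of the four terms is a one-step difference of $\psi$, so the finite sum $\sum_{k=0}^K \|\eta_{m+k}-\eta_{n+k}\|^2$ telescopes. Introducing
$$
F(a,b) = \psi(a,a) - \psi(a,b) - \psi(b,a) + \psi(b,b),
$$
a direct computation shows
$$
\sum_{k=0}^K \|\eta_{m+k}-\eta_{n+k}\|^2 = F(m+K+1,n+K+1) - F(m,n).
$$

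The key observation — and the only real content beyond bookkeeping — is that $F \leq 0$ everywhere. Indeed, $F(a,b)$ is exactly $\sum_{i,j} c_i\bar c_j\psi(x_i,x_j)$ with $x_0 = a$, $x_1 = b$, $c_0 = 1$, $c_1 = -1$, so $\sum c_i = 0$ and the conditional negative definiteness of $\psi$ forces $F(a,b) \leq 0$. Hence every partial sum is bounded by
$$
\sum_{k=0}^K \|\eta_{m+k}-\eta_{n+k}\|^2 \leq -F(m,n) < \infty,
$$
and letting $K \to \infty$ gives the desired summability. The only step that requires any care is recognizing that $F$ has a sign — once the telescoping identity is in hand the rest is automatic, so I expect no serious obstacle.
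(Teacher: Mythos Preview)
Your proof is correct. The telescoping identity
$\sum_{k=0}^K \|\eta_{m+k}-\eta_{n+k}\|^2 = F(m+K+1,n+K+1) - F(m,n)$
is exactly right, and the observation $F\le 0$ from conditional negative definiteness finishes the argument cleanly.

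This is a genuinely different route from the paper's. The paper does not telescope the full sum directly; instead it introduces an auxiliary kernel
$\rho(k,l) = \psi(k+1,l) + \psi(k,l+1) - \psi(k,l) - \psi(k+1,l+1)$,
shows that $(\rho-\rho\circ\sigma)(k,l) = \la \eta_k-\eta_{k+1},\eta_l-\eta_{l+1}\ra$ is positive definite, and uses this together with $\rho(k,k)\ge 0$ (from conditional negative definiteness) to conclude that $\rho(k,k)$ is decreasing and nonnegative, hence $\sum_k \|\eta_k-\eta_{k+1}\|^2 = \rho(0,0)-\lim_k\rho(k,k) < \infty$. The general case is then obtained by the triangle inequality in $\bigoplus_k H$, yielding the explicit bound $\big(\sum_i \|\eta_{k+i}-\eta_{l+i}\|^2\big)^{1/2} \le |k-l|\,C$. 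Your argument is shorter and more elementary --- one telescoping identity and one sign observation --- and it gives the sharper bound $-F(m,n)$ immediately. The paper's detour through $\rho$ does have a small side benefit: it isolates the ``consecutive'' estimate $\sum_k\|\eta_k-\eta_{k+1}\|^2<\infty$ and the resulting linear growth $\|\eta_k\|\le c(k+1)$, which are used verbatim later in Proposition~\ref{prop:bound}; but of course these follow from your conclusion by taking $m=0$, $n=1$.
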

\begin{proof}
Let $\phi = \psi\circ\sigma - \psi$. Since by assumption $\phi$ is positive definite, there are vectors $\eta_i\in H$, where $H$ is a Hilbert space, such that $\phi(k,l) = \la\eta_k,\eta_l\ra$ for every $k,l\in\N_0$. Define
$$
\rho(k,l) = \psi(k+1,l) + \psi(k,l+1) - \psi(k,l) - \psi(k+1,l+1).
$$
Then
\begin{align*}
(\rho - \rho\circ\sigma)(k,l) &= - \phi(k+1,l) - \phi(k,l+1) + \phi(k,l) + \phi(k+1,l+1) \\
&= \la \eta_k - \eta_{k+1},\eta_l - \eta_{l+1} \ra,
\end{align*}
so $\rho - \rho\circ\sigma$ is a positive definite kernel. In particular,
\begin{align}\label{eq:rho}
\rho(0,0) \geq \rho(1,1) \geq \rho(2,2) \geq \cdots.
\end{align}
Since $\psi$ is conditionally negative definite,
$$
- \rho(k,k) = (1,-1) \begin{pmatrix}
\psi(k,k) & \psi(k,k+1) \\
\psi(k+1,k) & \psi(k+1,k+1)
\end{pmatrix}
\binom{1}{-1} \leq 0,
$$
so $\rho(k,k) \geq 0$ for every $k$. Combining this with \eqref{eq:rho} we see that $\lim_k \rho(k,k)$ exists. Hence
$$
\sum_{k=0}^\infty ||\eta_k - \eta_{k+1}||^2 = \sum_{k=0}^\infty (\rho - \rho\circ\sigma)(k,k) = \rho(0,0) - \lim_k \rho(k,k) < \infty
$$
Let $C = \left( \sum_{k=0}^\infty ||\eta_k - \eta_{k+1}||^2 \right)^{1/2}$. The triangle inequality (for the Hilbert space $H\oplus H\oplus \cdots$) yields that
$$
\left( \sum_{k=0}^\infty ||\eta_k - \eta_{k+2}||^2 \right)^{1/2} \leq C + \left( \sum_{k=0}^\infty ||\eta_{k+1} - \eta_{k+2}||^2 \right)^{1/2} \leq 2C,
$$
and similarly
$$
\left( \sum_{i=0}^\infty ||\eta_{k+i} - \eta_{l+i}||^2 \right)^{1/2} \leq |k-l| C < \infty
$$
for all $k,l\in\N_0$.
\end{proof}

\begin{prop}\label{prop:thm2}
Let $\psi:\N_0\times\N_0\to\C$ be a function. The following are equivalent.
\begin{enumerate}
	\item For all $t>0$ the function $e^{-t\psi}$ satisfies the equivalent conditions in Proposition~\ref{prop:thm1}.
	\item $\psi$ is a conditionally negative definite kernel with $\psi(0,0) = 0$, and $\psi\circ\sigma - \psi$ is a positive definite kernel.
\end{enumerate}
Moreover, if $\psi$ takes only real values, this is equivalent to the following assertion.
\begin{enumerate}
\item[{\us(3)}] There exist a Hilbert space $H$, a sequence of vectors $(\eta_i)_{i=0}^\infty$ in $H$ and a conditionally negative definite function $\psi_0:\Z\to\R$ with $\psi_0(0) = 0$ such that
	$$
	\psi(k,l) = \frac12 \left( \sum_{i=0}^{k-1} || \eta_i||^2 + \sum_{i=0}^{l-1} ||\eta_i||^2 + \sum_{i=0}^\infty || \eta_{k+i} - \eta_{l+i} ||^2 \right) + \psi_0(k-l)
	$$
	for all $(k,l)\in\N_0\times\N_0$ (and the infinite sum is convergent). 
\end{enumerate}
\end{prop}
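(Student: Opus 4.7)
My plan is to establish the chain $(1) \iff (2) \iff (3)$ by combining Proposition~\ref{prop:thm1}, Schoenberg's theorem, and Lemma~\ref{lem:3}.

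For the equivalence $(1) \iff (2)$: in the forward direction, applying Proposition~\ref{prop:thm1}(4) to $\phi = e^{-t\psi}$ for each $t>0$ immediately yields that $e^{-t\psi}$ is positive definite (so $\psi$ is conditionally negative definite by Schoenberg), that $e^{-t\psi(0,0)} = 1$ (forcing $\psi(0,0) = 0$), and that $e^{-t\psi} - e^{-t\psi}\circ\sigma$ is positive definite; dividing by $t$ and letting $t\to 0^+$ gives, by pointwise convergence, that $\psi\circ\sigma - \psi$ is positive definite. For the converse I would use the integral identity
\[
e^{-t\psi(k,l)} - e^{-t\psi(k+1,l+1)} = \int_0^t (\psi\circ\sigma - \psi)(k,l)\,e^{-(t-s)\psi(k,l) - s\psi(k+1,l+1)}\,ds.
\]
The prefactor is positive definite by hypothesis, and both exponentials are positive definite by Schoenberg (using that $\psi\circ\sigma$ is conditionally negative definite whenever $\psi$ is). Pointwise products and integrals of positive definite kernels remain positive definite, so together with Schoenberg this verifies all three conditions in Proposition~\ref{prop:thm1}(4).

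For $(3) \implies (2)$ I would directly verify each condition: the kernel $\sum_i\|\eta_{k+i} - \eta_{l+i}\|^2$ is conditionally negative definite as a convergent sum of kernels of type $\|a(k) - a(l)\|^2$; the one-variable contribution is of the form $f(k) + f(l)$ and hence conditionally negative definite; $\psi_0(k-l)$ is conditionally negative definite on $\N_0$ since $\psi_0$ is conditionally negative definite on $\Z$; and a short computation using the parallelogram identity gives $\psi\circ\sigma - \psi = \la\eta_\cdot,\eta_\cdot\ra$, which is positive definite. For the substantive direction $(2) \implies (3)$, I would first invoke Lemma~\ref{lem:3} to obtain vectors $(\eta_i)_{i\ge 0}$ with $(\psi\circ\sigma - \psi)(m,n) = \la\eta_m, \eta_n\ra$ and $\sum_k\|\eta_{m+k} - \eta_{n+k}\|^2 < \infty$, and then define
\[
\chi(k,l) = \tfrac12\Bigl(\sum_{i=0}^{k-1}\|\eta_i\|^2 + \sum_{i=0}^{l-1}\|\eta_i\|^2 + \sum_{i=0}^\infty\|\eta_{k+i} - \eta_{l+i}\|^2\Bigr).
\]
A direct calculation shows $\chi\circ\sigma - \chi = \la\eta_\cdot,\eta_\cdot\ra$, so $(\psi-\chi)\circ\sigma = \psi-\chi$, which forces $\psi - \chi$ to be of the form $\psi_0(k-l)$ for a symmetric function $\psi_0:\Z\to\R$ with $\psi_0(0)=0$.

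The hard part will be showing that this $\psi_0$ is conditionally negative definite on $\Z$; the naive approach of subtracting $\chi$ from $\psi$ fails because differences of conditionally negative definite kernels are not of this type in general. My plan is to work with the diagonal-adjusted kernel $\tilde\psi(k,l) = \psi(k,l) - \tfrac12\psi(k,k) - \tfrac12\psi(l,l)$, which is conditionally negative definite (as the sum of $\psi$ and a kernel of the form $f(k)+f(l)$) and vanishes on the diagonal. Using $\psi(k,k) = \sum_{i=0}^{k-1}\|\eta_i\|^2$ (which follows from $(\psi\circ\sigma - \psi)(k,k) = \|\eta_k\|^2$ and $\psi(0,0)=0$) and the parallelogram identity, one checks that $\tilde\psi(k+1,l+1) - \tilde\psi(k,l) = -\tfrac12\|\eta_k - \eta_l\|^2$, so that $\tilde\psi(k+N, l+N) = \tilde\psi(k,l) - \tfrac12\sum_{j=0}^{N-1}\|\eta_{k+j} - \eta_{l+j}\|^2$. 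The summability from Lemma~\ref{lem:3} lets this converge pointwise, as $N \to \infty$, to $\psi_0(k-l)$. Each shifted kernel $(k,l)\mapsto\tilde\psi(k+N, l+N)$ is conditionally negative definite (shifts preserve this property), and pointwise limits of conditionally negative definite kernels inherit it, so $(k,l)\mapsto\psi_0(k-l)$ is conditionally negative definite on $\N_0$. A standard shift argument then yields that $\psi_0$ is conditionally negative definite on $\Z$, completing the proof.
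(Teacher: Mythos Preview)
Your proof is correct and follows essentially the same route as the paper's. The only notable difference is in $(2)\Rightarrow(1)$: where you use the integral identity $e^{-t\psi}-e^{-t\psi\circ\sigma}=\int_0^t(\psi\circ\sigma-\psi)\,e^{-(t-s)\psi-s\psi\circ\sigma}\,ds$, the paper instead factors algebraically as $e^{-\psi}-e^{-\psi\circ\sigma}=e^{-\psi\circ\sigma}\bigl(e^{\psi\circ\sigma-\psi}-1\bigr)$ and expands the second factor as a power series; both arguments are equally valid and of comparable length. Your treatment of $(2)\Rightarrow(3)$ is in fact identical to the paper's once one notices that your $\tilde\psi$ coincides with the paper's $\psi_2=\psi-f(k)-f(l)$ (via $\psi(k,k)=\sum_{i<k}\|\eta_i\|^2$), so that the shift-and-limit argument for conditional negative definiteness of $\psi_0$ is the same.
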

\begin{proof}
(1)$\implies$(2): We assume that condition (4) of Proposition~\ref{prop:thm1} holds for $e^{-t\psi}$ for each $t> 0$. Then the function $e^{-t\psi}$ is a positive definite kernel for each $t> 0$, and so $\psi$ is conditionally negative definite by Schoenberg's Theorem. Since $e^{-t\psi(0,0)} = 1$ for all $t>0$, we must have $\psi(0,0) = 0$. Moreover, $e^{-t\psi} - e^{-t\psi\circ\sigma}$ is positive definite for each $t > 0$, and hence is
$$
\psi\circ\sigma - \psi = \lim_{t\to 0} \frac{e^{-t\psi} - e^{-t\psi\circ\sigma} }{t},
$$
where the limit is pointwise.

(2)$\implies$(1): For obvious reasons it suffices to prove the case $t = 1$. We verify condition (4) of Proposition~\ref{prop:thm1}. An application of Schoenberg's Theorem shows that $e^{-\psi}$ is positive definite, and of course $e^{-\psi(0,0)} = 1$. Consider
\begin{align}\label{eq:prod}
e^{-\psi} - e^{-\psi\circ\sigma} = e^{-\psi\circ\sigma}\left( e^{(\psi\circ\sigma - \psi)} - 1\right).
\end{align}
The function $e^{-\psi\circ\sigma}$ is positive definite by Schoenberg's Theorem. Expanding the exponential function in the parenthesis as a power series we get
$$
e^{\psi\circ\sigma - \psi} - 1 = \sum_{n=1}^\infty \frac{(\psi\circ\sigma - \psi)^n}{n!},
$$
and since $\psi\circ\sigma - \psi$ is positive definite, so is each power $(\psi\circ\sigma - \psi)^n$, and so is the sum, and hence also the product in \eqref{eq:prod}. The conditions in (4) of Proposition~\ref{prop:thm1} have now been verified.

Suppose $\psi$ takes only real values.

(2)$\implies$(3): By Lemma~\ref{lem:3} there are vectors $(\eta_i)_{i=0}^\infty$ in a Hilbert space $H$, such that $(\psi\circ\sigma - \psi)(m,n) = \la\eta_m,\eta_n\ra$ and
$$
\sum_{k=0}^\infty || \eta_{m+k} - \eta_{n+k} ||^2 < \infty
$$
for every $m,n\in\N_0$. Since $\psi$ is hermitian and real, it is symmetric. Hence $\la\eta_m,\eta_n\ra = \la\eta_n,\eta_m\ra$.

Let $f(k) = \frac12 \sum_{i=0}^{k-1} ||\eta_i||^2$, and set
$$
\psi_2(k,l) = \psi(k,l) - f(k) - f(l), \quad \psi_1(k,l) = \psi_2(k,l) - \frac12 \sum_{i=0}^\infty ||\eta_{k+i} - \eta_{l+i}||^2.
$$
We claim that $\psi_1$ is conditionally negative definite, $\psi_1(0,0) = 0$, and that $\psi_1(k,l)$ only depends on $k-l$. These claims will finish the proof of (2)$\implies$(3).
We find
\begin{align*}
(\psi_1\circ\sigma - \psi_1)(k,l)
&= (\psi\circ\sigma - \psi)(k,l) - \frac12 ||\eta_k||^2 - \frac12||\eta_l||^2  + \frac12 ||\eta_k - \eta_l||^2
\\ &= \la \eta_k,\eta_l\ra - \frac12 ||\eta_k||^2 - \frac12||\eta_l||^2  + \frac12 ||\eta_k - \eta_l||^2 = 0,
\end{align*}
and hence $\psi_1(k,l)$ only depends on $k-l$. Letting $\psi_0(k-l) = \psi_1(k,l)$ gives a well-defined function $\psi_0:\Z\to\R$. Note that
$$
\psi_0(0) = \psi_1(0,0) = \psi_2(0,0) = \psi(0,0) = 0.
$$
It remains to be seen that $\psi_0$ is conditionally negative definite. Observe that $\psi_2$ is conditionally negative definite, because $\psi$ is. Also,
$$
\psi_2(k,l) = \psi_0(k-l) + \frac12 \sum_{i=0}^\infty || \eta_{k+i} - \eta_{l+i} ||^2.
$$
Replacing $(k,l)$ by $(k+n,l+n)$ we see that
$$
\psi_2(k+n,l+n) = \psi_0(k-l) + \frac12 \sum_{i=n}^\infty || \eta_{k+i} - \eta_{l+i} ||^2,
$$
and so
$$
\lim_{n\to\infty} \psi_2(k+n,l+n) = \psi_0(k-l).
$$
Since $\psi_2$ was conditionally negative definite (and hence also $\psi_2\circ\sigma^n$), it follows that $\psi_0$ is conditionally negative definite being the pointwise limit of conditionally negative definite kernels.

(3)$\implies$(2): Since each of the functions
$$
(k,l)\mapsto \sum_{i=0}^{k-1} ||\eta_i||^2, \quad
(k,l)\mapsto ||\eta_{k+i} - \eta_{l+i}||^2, \quad
\psi_0
$$
is a conditionally negative definite kernel, so is $\psi$. Also $\psi(0,0) = \psi_0(0) = 0$. Finally, $(\psi\circ\sigma - \psi)(k,l) = \la\eta_k,\eta_l\ra$, and hence $\psi\circ\sigma - \psi$ is positive definite.

\end{proof}

\subsection{Decomposition into positive and negative parts}

In the following section we will describe the kernels $\phi$ such that $\|\omega_{e^{-t\phi}}\| \leq 1$ for every $t>0$. The main result here is contained in Theorem~\ref{thm:split}.

\begin{defi}\label{defi:S}
Denote by $\mc S$ the set of hermitian functions $\phi:\N_0^2\to\C $ that split as $\phi = \psi + \theta$, where
\begin{itemize}
\renewcommand{\labelitemi}{$\cdot$}
	\item $\psi$ is a conditionally negative definite kernel with $\psi(0,0) = 0$,
	\item $\psi\circ\sigma - \psi$ is a positive definite kernel,
	\item $\theta - \frac12\theta(0,0)$ is a positive definite kernel,
	\item $\theta - \theta\circ\sigma$ is a positive definite kernel.
\end{itemize}
\end{defi}
Observe that $\mc S$ is stable under addition, multiplication by positive numbers and addition by positive constant functions.

\begin{lem}\label{lem:Sclosed}
The set $\mc S$ is closed in the topology of pointwise convergence.
\end{lem}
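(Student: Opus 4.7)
The plan is to reduce closedness of $\mathcal{S}$ to a uniform boundedness statement for the positive parts $\theta_n$ in the splittings $\phi_n = \psi_n + \theta_n$, after which a standard diagonal extraction together with the pointwise closedness of positive definite and conditionally negative definite kernels finishes the argument. Fix a sequence $\phi_n\in\mathcal{S}$ converging pointwise to a function $\phi$, and for each $n$ write $\phi_n = \psi_n + \theta_n$ as in Definition~\ref{defi:S}.

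The first and main step is to observe that each $\theta_n$ is bounded by $\theta_n(0,0)$ in absolute value, with a bound that does not blow up as $n\to\infty$. Indeed, positivity of $\theta_n - \theta_n\circ\sigma$ evaluated on the diagonal gives $\theta_n(k,k)\ge\theta_n(k+1,k+1)$, so $\theta_n(k,k)\le\theta_n(0,0)$; and positivity of $\theta_n - \tfrac{1}{2}\theta_n(0,0)$ on the diagonal gives $\theta_n(k,k)\ge\tfrac{1}{2}\theta_n(0,0)$, so in particular $\theta_n(0,0)\ge 0$. The Cauchy--Schwarz inequality for the positive definite kernel $\theta_n - \tfrac{1}{2}\theta_n(0,0)$ then yields
$$
\bigl|\theta_n(k,l) - \tfrac{1}{2}\theta_n(0,0)\bigr|^{2} \le \bigl(\theta_n(k,k) - \tfrac{1}{2}\theta_n(0,0)\bigr)\bigl(\theta_n(l,l) - \tfrac{1}{2}\theta_n(0,0)\bigr) \le \tfrac{1}{4}\theta_n(0,0)^{2},
$$
hence $|\theta_n(k,l)|\le \theta_n(0,0)$ for all $(k,l)$. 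Since $\psi_n(0,0) = 0$ forces $\theta_n(0,0) = \phi_n(0,0) \to \phi(0,0)$, the family $\{\theta_n\}$ is uniformly bounded on all of $\mathbb{N}_0^{2}$.

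The second step is a diagonal extraction: since $\mathbb{N}_0^{2}$ is countable and the values of $\theta_n$ lie in a common bounded subset of $\mathbb{C}$, Tychonoff (or a standard Cantor diagonal argument) furnishes a subsequence $\theta_{n_k}$ converging pointwise to some $\theta:\mathbb{N}_0^{2}\to\mathbb{C}$. Setting $\psi = \phi - \theta$, we then have $\psi_{n_k} = \phi_{n_k} - \theta_{n_k} \to \psi$ pointwise as well.

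The third step is to verify that $\psi$ and $\theta$ satisfy the four conditions in Definition~\ref{defi:S}. This is immediate because pointwise limits of positive definite kernels are positive definite and pointwise limits of conditionally negative definite kernels are conditionally negative definite (cited in Section~\ref{sec:prelim}): the kernels $\theta_{n_k} - \tfrac{1}{2}\theta_{n_k}(0,0)$, $\theta_{n_k} - \theta_{n_k}\circ\sigma$ and $\psi_{n_k}\circ\sigma - \psi_{n_k}$ converge pointwise to $\theta - \tfrac{1}{2}\theta(0,0)$, $\theta - \theta\circ\sigma$ and $\psi\circ\sigma - \psi$ respectively, while the $\psi_{n_k}$ (each conditionally negative definite with $\psi_{n_k}(0,0) = 0$) converge to $\psi$ with $\psi(0,0) = 0$. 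Finally $\phi$ is hermitian as a pointwise limit of hermitian functions, so $\phi = \psi + \theta\in\mathcal{S}$. The only substantive obstacle is the uniform boundedness of the $\theta_n$ in the first step; everything else is a soft pointwise-limit argument.
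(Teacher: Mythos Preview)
Your proof is correct and follows essentially the same approach as the paper: bound $\theta_n$ uniformly via Cauchy--Schwarz on the positive definite kernel $\theta_n - \tfrac12\theta_n(0,0)$ together with the diagonal monotonicity from $\theta_n - \theta_n\circ\sigma \geq 0$, then pass to a convergent sub(net/sequence) and use that the defining conditions are preserved under pointwise limits. The only cosmetic difference is that the paper works with nets and universal subnets, whereas you use sequences and a diagonal extraction; since $\mathbb{N}_0^2$ is countable the pointwise topology on $\mathbb{C}^{\mathbb{N}_0^2}$ is metrizable, so sequential closedness suffices.
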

\begin{proof}
Let $(\phi_i)_{i\in I}$ be a net in $\mc S$ converging pointwise to $\phi$, and let $\phi_i = \psi_i + \theta_i$ be a splitting guaranteed by the assumption $\phi_i\in\mc S$. 

An application of the Cauchy-Schwarz inequality to the positive definite kernel $\theta_i - \frac12\theta_i(0,0)$ gives
$$
|\theta_i(k,l) - \tfrac12\theta_i(0,0)| \leq \left(\theta_i(k,k) - \tfrac12\theta_i(0,0)\right)^{1/2}\left(\theta_i(l,l) - \tfrac12\theta_i(0,0)\right)^{1/2}
$$
and using positive definiteness of $\theta_i - \theta_i\circ\sigma$ then gives
$$
|\theta_i(k,l) - \tfrac12\theta_i(0,0)|  \leq \theta_i(0,0) - \tfrac12\theta_i(0,0) = \tfrac12\theta_i(0,0) = \tfrac12\phi_i(0,0).
$$
Since $\theta_i(0,0) = \phi_i(0,0) \to \phi(0,0)$, this shows that the net $(\theta_i(k,l))_{i\in I}$ is eventually bounded for each pair $(k,l)$. It follows that for each pair $(k,l)$ the net $\psi_i(k,l)$ is also eventually bounded.

Let $(\psi_j)_{j\in J}$ and $(\theta_j)_{j\in J}$ be universal subnets of $(\psi_i)_{i\in I}$ and $(\theta_i)_{i\in I}$ (we can assume, as we have done, that they have the same index set $J$). Since the net $(\psi_j)_{j\in J}$ is pointwise eventually bounded, it converges to some limit $\psi$. Similarly let $\theta = \lim_j \theta_j$.
Since the defining properties of the splitting $\phi_j = \psi_j + \theta_j$ pass to the limits $\psi$ and $\theta$, we have the desired splitting $\phi = \psi + \theta$, and the proof is done.
\end{proof}

We have the following alternative characterization of the set $\mc S$. This should be compared with the result in Theorem~\ref{thm:A}.

\begin{thm}\label{thm:split}
Let $\phi:\N_0^2\to\C$ be a hermitian function. Then $\phi\in\mc S$ if and only if $$||\omega_{e^{-t\phi}}|| \leq 1 \quad \text{for every }t > 0,$$ where $\omega_{e^{-t\phi}}$ is the functional associated with $e^{-t\phi}$ as in \eqref{eq:omega}.
\end{thm}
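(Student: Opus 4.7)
The forward direction should be a short composition argument. Assuming $\phi=\psi+\theta\in\mc S$, multiplicativity of multipliers on $D$ gives $M_{e^{-t\phi}}=M_{e^{-t\psi}}\circ M_{e^{-t\theta}}$; Proposition~\ref{prop:thm2} makes $M_{e^{-t\psi}}$ u.c.p.\ (so of norm $1$), Corollary~\ref{cor:4} gives $\|M_{e^{-t\theta}}\|\le 1$, and Remark~\ref{rem:ucp} converts this back into $\|\omega_{e^{-t\phi}}\|\le 1$.

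For the converse, my plan is to obtain $\psi$ and $\theta$ via an ultralimit of Hahn--Jordan decompositions. Fix a free ultrafilter $\mf u$ on $\N$. For each $n$, the Hermitian functional $\omega_n:=\omega_{e^{-\phi/n}}$ has norm $\le 1$, and admits a Hahn--Jordan decomposition $\omega_n=\omega_n^+-\omega_n^-$ with $\omega_n^+,\omega_n^-$ positive and $\|\omega_n^+\|+\|\omega_n^-\|=\|\omega_n\|\le 1$. Setting $\phi_n^\pm(k,l):=\omega_n^\pm(S^k(S^*)^l)$, Proposition~\ref{prop:thm1} tells us that each $\phi_n^\pm$ is positive definite with $\phi_n^\pm-\phi_n^\pm\circ\sigma$ positive definite, and $\phi_n^\pm(0,0)=\|\omega_n^\pm\|$. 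Writing $a_n:=\phi_n^+(0,0)$ and $b_n:=\phi_n^-(0,0)$, one has $a_n-b_n=e^{-\phi(0,0)/n}$ and $a_n+b_n\le 1$.

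I would then form
\[
\psi_n:=n(a_n-\phi_n^+),\qquad \theta_n:=n\phi_n^-+n(1-a_n),
\]
so that $\psi_n+\theta_n=n(1-e^{-\phi/n})\to\phi$ pointwise. A direct check shows that $\psi_n$ is conditionally negative definite with $\psi_n(0,0)=0$ and $\psi_n\circ\sigma-\psi_n=n(\phi_n^+-\phi_n^+\circ\sigma)$ positive definite, while $\theta_n$ is positive definite with $\theta_n-\theta_n\circ\sigma=n(\phi_n^--\phi_n^-\circ\sigma)$ positive definite. Cauchy--Schwarz on $\phi_n^-$, together with the diagonal monotonicity $\phi_n^-(k,k)\le\phi_n^-(0,0)=b_n$ (immediate from $\phi_n^--\phi_n^-\circ\sigma$ positive definite), gives $|\phi_n^-(k,l)|\le b_n$, and hence $|\theta_n(k,l)|\le n(1-e^{-\phi(0,0)/n})\to\phi(0,0)$. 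The sequence $(\theta_n)$ is therefore uniformly bounded pointwise, so the ultralimit $\theta(k,l):=\lim_{n\to\mf u}\theta_n(k,l)$ exists, and then $\psi:=\phi-\theta=\lim_{n\to\mf u}\psi_n$. All the defining properties of $\mc S$ pass to pointwise limits, with one exception.

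I expect the main obstacle to be verifying that $\theta-\tfrac12\theta(0,0)$ is positive definite, which is strictly stronger than $\theta$ being positive definite and is not preserved under generic pointwise limits. The resolution should be the identity
\[
\theta_n(k,l)-\tfrac12\theta_n(0,0)=n\phi_n^-(k,l)+\tfrac{n}{2}\bigl(1-a_n-b_n\bigr),
\]
in which $n\phi_n^-$ is positive definite and the additive constant is nonnegative \emph{precisely because} the Hahn--Jordan norms satisfy $a_n+b_n\le 1$. Thus $\theta_n-\tfrac12\theta_n(0,0)$ is positive definite for every $n$, and this property passes to the ultralimit, yielding $\theta-\tfrac12\theta(0,0)$ positive definite and hence $\phi\in\mc S$.
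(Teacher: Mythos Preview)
Your proposal is correct and follows essentially the same route as the paper. The forward direction is identical. For the converse, the paper factors the argument into two lemmas: Lemma~\ref{lem:2} shows that $\|\omega_\phi\|\le 1$ implies $1-\phi\in\mc S$ via the Hahn--Jordan decomposition (your $\psi_n,\theta_n$ are precisely $n$ times the $\psi,\theta$ constructed there, applied to $e^{-\phi/n}$), and then Lemma~\ref{lem:Sclosed} establishes that $\mc S$ is closed under pointwise limits, so $(1-e^{-t\phi})/t\to\phi$ stays in $\mc S$. You have effectively merged these two lemmas into a single explicit argument, using an ultrafilter on $\N$ in place of universal subnets, and verifying the $\theta-\tfrac12\theta(0,0)$ condition directly at each stage rather than abstractly. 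Your identification of the key inequality $a_n+b_n\le 1$ (from the Hahn--Jordan norm identity) as the reason this condition survives is exactly the mechanism the paper uses as well.
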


For the proof we need the following lemma.
\begin{lem}\label{lem:2}
Let $\phi:\N_0^2\to\C$ be a hermitian function. If $||\omega_\phi|| \leq 1$, then $1 - \phi\in\mc S$.
\end{lem}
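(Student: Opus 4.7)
The plan is to exploit the $C^*$-algebra structure of $C^*(S)$ via the Jordan decomposition of hermitian bounded functionals, and then use Proposition~\ref{prop:thm1} to translate positivity of functionals into the kernel-level conditions defining $\mc S$.

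First, since $\|\omega_\phi\| \le 1$ and $D$ is dense in $C^*(S)$, I extend $\omega_\phi$ by continuity to a bounded linear functional on $C^*(S)$ with norm at most $1$. A short computation using hermitianness of $\phi$ shows $\omega_\phi((S^m(S^*)^n)^*) = \overline{\omega_\phi(S^m(S^*)^n)}$, so the extension is hermitian on the $^*$-algebra $D$, hence on $C^*(S)$. The Jordan decomposition for hermitian functionals on a $C^*$-algebra then gives a splitting $\omega_\phi = \omega_+ - \omega_-$ with $\omega_\pm \ge 0$ and $\|\omega_+\| + \|\omega_-\| = \|\omega_\phi\| \le 1$. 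Writing $\phi_\pm(m,n) = \omega_\pm(S^m(S^*)^n)$ and $a = \phi_+(0,0)$, $b = \phi_-(0,0)$, one has $a,b \ge 0$, $a+b \le 1$, and $\phi = \phi_+ - \phi_-$.

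Next, I extract the kernel-level positivity properties of $\phi_+$ and $\phi_-$. If $a > 0$, then $\omega_+/a$ is a state on $C^*(S)$, and Proposition~\ref{prop:thm1} (1)$\Leftrightarrow$(4) applied to $\phi_+/a$ shows that $\phi_+$ is positive definite and that $\phi_+ - \phi_+\circ\sigma$ is positive definite. (The case $a=0$ forces $\omega_+=0$, hence $\phi_+=0$, and these conclusions are trivial.) The same argument applies to $\phi_-$.

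The decomposition I then propose is
\[
\psi = a - \phi_+, \qquad \theta = (1-a) + \phi_-,
\]
so that $\psi + \theta = 1 - \phi_+ + \phi_- = 1 - \phi$. Since $\phi_+$ is positive definite, $-\phi_+$ is conditionally negative definite, and adding a constant preserves this, so $\psi$ is conditionally negative definite with $\psi(0,0) = 0$. Moreover $\psi\circ\sigma - \psi = \phi_+ - \phi_+\circ\sigma$ is positive definite. For $\theta$, one computes $\theta(0,0) = 1 - a + b$, so
\[
\theta - \tfrac12\theta(0,0) = \tfrac{1-a-b}{2} + \phi_-,
\]
which is positive definite because $1 - a - b \ge 0$ by the Jordan norm bound and $\phi_-$ is positive definite; and $\theta - \theta\circ\sigma = \phi_- - \phi_-\circ\sigma$ is positive definite. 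All four conditions of Definition~\ref{defi:S} hold, so $1-\phi \in \mc S$.

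The main thing to be careful about is simply to confirm that the Jordan decomposition applies cleanly to the extended functional and that the norm bound $a+b \le 1$ — which is exactly what makes the constant term of $\theta - \tfrac12\theta(0,0)$ non-negative — is preserved; everything else is a direct application of Proposition~\ref{prop:thm1}.
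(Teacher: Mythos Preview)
Your proof is correct and follows essentially the same route as the paper: Jordan-decompose the hermitian functional $\omega_\phi$, translate positivity of each piece into the kernel conditions via Proposition~\ref{prop:thm1}, and set $\psi = a - \phi_+$, $\theta = (1-a) + \phi_-$. Your use of the Jordan norm identity $\|\omega_+\| + \|\omega_-\| = \|\omega_\phi\| \le 1$ to get $a+b \le 1$ (hence $\theta - \tfrac12\theta(0,0) = \tfrac{1-a-b}{2} + \phi_-$ positive definite) is in fact a slightly cleaner justification of that step than the paper's, but the argument is otherwise identical.
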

\begin{proof}
Since $\phi$ is hermitian, $\omega_\phi$ is hermitian. Now, use the Hahn-Jordan Decomposition Theorem to write $\omega_\phi = \omega^+ - \omega^-$, where $\omega^+,\omega^- \in C^*(S)^*$ are positive functionals. If we define functions $\phi^+,\phi^-$ by
$$
\phi^\pm(m,n) = \omega^\pm(S^m(S^*)^n),
$$
then $\phi^+$ and $\phi^-$ satisfy the second condition of Proposition~\ref{prop:thm1} (up to a scaling factor). Also, it is clear that $\phi = \phi^+ - \phi^-$.

Let $c = \phi^+(0,0)$, and put
$$
\psi = c - \phi^+, \quad \theta = 1-c + \phi^-.
$$
Obviously, $\psi + \theta = 1 - \phi$. It remains to show that $\psi$ and $\theta$ have the desired properties used in the definition of $\mc S$.

Since $\phi^+$ is positive definite, $\psi$ is conditionally negative definite with $\psi(0,0) = 0$. Also, $\psi\circ\sigma - \psi = \phi^+ - \phi^+\circ\sigma$, which is positive definite by Proposition~\ref{prop:thm1}.

Moreover, $\theta - \theta\circ\sigma = \phi^- - \phi^-\circ\sigma$ is positive definite. Finally,
$$
\theta - \frac12 \theta(0,0) = \frac12 ( 1-\phi(0,0)) + \phi^-,
$$
and since $|\phi(0,0)| \leq ||\omega_\phi|| \leq 1$ and $\phi(0,0)$ is real, it follows that $1 - \phi(0,0) \geq 0$, so $\theta - \frac12 \theta(0,0)$ is positive definite (using that $\phi^-$ is positive definite).
\end{proof}

\begin{proof}[Proof of Theorem~\ref{thm:split}]
Suppose first $||\omega_{e^{-t\phi}}|| \leq 1$ for all $t> 0$. It follows from the previous lemma that $1- e^{-t\phi} \in\mc S$ for every $t> 0$. Hence the functions $(1 - e^{-t\phi})/t$ are in $\mc S$, and they converge pointwise to $\phi$ as $t\to 0$. Since $\mc S$ is closed under pointwise convergence, we conclude that $\phi\in\mc S$.

Conversely, suppose $\phi\in\mc S$. Write $\phi = \psi + \theta$ as in the definition of $\mc S$. From Proposition~\ref{prop:thm2} we get that $M_{e^{-t\psi}}$ is a u.c.p. map for every $t>0$, and hence $||M_{e^{-t\psi}}|| = 1$. Also, from Corollary~\ref{cor:4} we get that $||M_{e^{-t\theta}}|| \leq 1$ for every $t>0$. This combines to show
$$
||\omega_{e^{-t\phi}}|| = ||M_{e^{-t\phi}}|| \leq ||M_{e^{-t\psi}}|| \ ||M_{e^{-t\theta}}|| \leq 1.
$$

\end{proof}

\subsection{Comparison of norms}

In this section we establish the connection between norms of radial Herz-Schur multipliers on $\F_\infty$ and functionals on the Toeplitz algebra. This will be accomplished in Proposition~\ref{prop:norms}.

In \cite{HSS} the following theorem is proved (see Theorem 5.2 therein).
\begin{thm}\label{thm:HSS}
Let $\F_\infty$ be the free group on (countably) infinitely many generators, let $\phi:\F_\infty\to \C$ be a radial function, and let $\dot\phi:\N_0\to\C$ be as in Definition~\ref{defi:radial}. Finally, let $h=(h_{ij})_{i,j\in\N_0}$ be the Hankel matrix given by $h_{ij}=\dot\varphi(i+j)-\dot\varphi(i+j+2)$ for $i,j\in\N_0$. Then the following are equivalent:
	\begin{itemize}
		\item [(i)]$\phi$ is a Herz-Schur multiplier on $\F_\infty$.
		\item [(ii)]$h$ is of trace class.
	\end{itemize}
	If these two equivalent conditions are satisfied, then there exist unique constants $c_\pm\in\C$ and a unique $\dot\psi:\N_0\to\C$ such that
	\begin{align}\label{eq:pres}
		\dot\phi(n)=c_++c_-(-1)^n+\dot\psi(n)\qquad(n\in\N_0)
	\end{align}
	and
	\begin{equation*}
		\lim_{n\to\infty}\dot\psi(n)=0.
	\end{equation*}
	Moreover,
	\begin{equation*}
		\|\varphi\|_\HS=|c_+|+|c_-|+\|h\|_1.
	\end{equation*}
\end{thm}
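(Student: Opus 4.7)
The plan is to combine Gilbert's characterization (Proposition~\ref{prop:Gilbert}) with the tree geometry of $\F_\infty$ to identify $\|\phi\|_{B_2}$ with a norm governed by the Hankel matrix $h_{ij} = \dot\phi(i+j) - \dot\phi(i+j+2)$. For $x,y \in \F_\infty$ one has $|y^{-1}x| = |x| + |y| - 2\ell(x,y)$, where $\ell(x,y)$ is the length of the longest common prefix, and because $\F_\infty$ has infinitely many generators the triples $(|x|,|y|,\ell(x,y))$ can be prescribed arbitrarily. Any Gilbert factorization $\dot\phi(|y^{-1}x|) = \la a(x),b(y)\ra$ with $\|a(x)\|\|b(y)\| \leq C$ therefore controls the values of $\dot\phi$ and their differences in a way that reflects the full Hankel structure of $h$.

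To produce the decomposition, first note that $x \mapsto 1$ and $x \mapsto (-1)^{|x|}$ are both Herz-Schur multipliers of norm exactly one: the first is trivially positive definite, and the second is the character of the homomorphism $\F_\infty \to \{\pm 1\}$ sending each generator to $-1$, hence also positive definite. Any function of the form $\dot\phi(n) = c_+ + c_-(-1)^n$ therefore has Herz-Schur norm at most $|c_+|+|c_-|$ and annihilates the differences defining $h$. Assuming $h$ is trace class, the first column $h_{n,0} = \dot\phi(n) - \dot\phi(n+2)$ tends to $0$, so $\dot\phi$ converges along even and odd indices separately to limits $c_e,c_o$; setting $c_+ = (c_e+c_o)/2$, $c_- = (c_e-c_o)/2$, and $\dot\psi = \dot\phi - c_+ - c_-(-1)^{\cdot}$ gives the claimed splitting with $\dot\psi \to 0$, and uniqueness follows from the linear independence of $1$, $(-1)^n$ and sequences vanishing at infinity.

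The heart of the theorem is the two-sided identity $\|\phi\|_{B_2} = |c_+| + |c_-| + \|h\|_1$. For the upper bound I would use the nuclear decomposition $h = \sum_k s_k \la \cdot , u_k \ra v_k$ with $\sum_k s_k = \|h\|_1$ and build explicit bounded maps $a,b : \F_\infty \to H$ realizing $\dot\psi$ as a Schur factorization of norm $\|h\|_1$, indexing Hilbert-space coordinates over the prefixes of each $x$ and using the components $u_k,v_k$ to handle the differences $\dot\psi(i+j) - \dot\psi(i+j+2)$ one singular direction at a time; adding orthogonal rank-one contributions for $c_+$ and $c_-$ then yields a factorization of norm $|c_+| + |c_-| + \|h\|_1$ for the whole of $\phi$. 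For the lower bound, given any factorization $\dot\phi(|y^{-1}x|) = \la a(x),b(y)\ra$ with $\|a(x)\|\|b(y)\| \leq \|\phi\|_{B_2}$, one extracts a trace-class operator by telescoping differences along paths $x = g^k$ for a fixed generator $g$, obtaining an operator whose entries agree with $h$ up to boundary terms corresponding to $c_+,c_-$ and whose nuclear norm is controlled by $\|\phi\|_{B_2}$ minus the asymptotic contributions.

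The main obstacle is precisely the additivity in the norm formula: showing that the three contributions $|c_+|$, $|c_-|$ and $\|h\|_1$ do not merge into a smaller combined bound. This forces one to isolate the asymptotic limits $c_+,c_-$ as genuine orthogonal summands of any optimal factorization, which I would attempt via an ultraproduct or extremal-point argument, exploiting that $1$ and $(-1)^{|\cdot|}$ are extreme points of the unit ball of $B_2(\F_\infty)$ and that the remaining mass is carried by radial multipliers vanishing at infinity, where the Herz-Schur norm is dual to a natural nuclear norm matching $\|h\|_1$.
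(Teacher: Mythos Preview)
This theorem is not proved in the present paper at all: it is quoted verbatim from \cite{HSS} (Theorem~5.2 there), and the paper only uses it as a black box. So there is no ``paper's own proof'' to compare your proposal against; the comparison has to be with the argument in \cite{HSS}.

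That said, your outline is not the route taken in \cite{HSS}, and the gap in your sketch is exactly the point you flag at the end. The proof in \cite{HSS} does not try to separate $|c_+|$, $|c_-|$ and $\|h\|_1$ by an extremal-point or ultraproduct argument on factorizations. Instead it identifies the radial Herz-Schur norm with the norm of a linear functional on the Toeplitz algebra $C^*(S)$, and then uses the short exact sequence $0\to\mc K\to C^*(S)\to C(\mb T)\to 0$. For any such extension one has $C^*(S)^*\simeq \mc K^*\oplus_1 C(\mb T)^*$ isometrically, and this $\ell^1$-direct-sum decomposition of the dual is what gives the additive formula: the $\mc K^*$-component is a trace-class operator whose trace norm is $\|h\|_1$, and the $C(\mb T)^*$-component is the measure $c_+\delta_1+c_-\delta_{-1}$ with total variation $|c_+|+|c_-|$. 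The present paper recapitulates exactly this mechanism in Proposition~\ref{prop:HSS28} and Proposition~\ref{prop:norms}.

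Your upper-bound sketch (build a Gilbert factorization from the singular value decomposition of $h$, then add orthogonal rank-one pieces for $c_\pm$) is in the right spirit and can be made to work; this is close to how \cite{HSS} handles one direction via explicit maps on the tree. But your lower-bound plan---telescoping along $x=g^k$ and hoping to read off a trace-class operator with the right nuclear norm, then peeling off $c_\pm$ as extreme points---does not supply the missing idea. Telescoping along a single geodesic only sees the values $\dot\phi(|k-l|)$, which is a Toeplitz rather than Hankel pattern, and in any case controlling the nuclear norm of $h$ from a single Gilbert factorization requires precisely the $M$-ideal/$L$-summand structure that the Toeplitz-algebra argument provides for free. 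Without that structural input, there is no reason the three contributions should add rather than partially cancel.
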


The Fourier-Stieltjes algebra $B(\Z)$ of the group of integers is the linear span of positive definite functions on $\Z$. It is naturally identified with dual space of $C^*(\Z) \simeq C(\mb T)$, i.e., with the set $M(\mb T)$ of complex Radon measures on the circle, where $\phi\in B(\Z)$ corresponds to $\mu\in M(\mb T)$, if and only if
\begin{align}\label{eq:BM}
\phi(n) = \int_{\mb T} z^n d\mu(z) \quad\text{for all } n\in\Z.
\end{align}
Under this identification $B(\Z)$ becomes a Banach space when the norm $||\phi||_{B(\Z)}$ is defined to be $||\mu||$, the total variation of $\mu$.

\begin{prop}\label{prop:HSS28}
Let $\phi:\N_0\times\N_0\to\C$ be a function, and let $h = \phi - \phi\circ\sigma$. The functional $\omega_\phi$ extends to a bounded functional on $C^*(S)$ if and only if $h$ is of trace class, and the function $\phi_0:\Z\to\C$ given by $\phi_0(m-n) = \lim_k \phi(m+k,n+k)$ (which is then well-defined) lies in $B(\Z)$. If this is the case, then
$$
||\omega_\phi|| = ||h||_1 + ||\phi_0||_{B(\Z)}.
$$
\end{prop}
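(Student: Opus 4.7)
The plan is to exploit the Toeplitz extension \eqref{eq:extension} together with the fact that $\mc K$, being a closed two-sided ideal in the C*-algebra $C^*(S)$, is an $M$-ideal. Dualizing this yields an $\ell^1$-direct-sum decomposition
$$C^*(S)^* = \mc T_1(\ell^2(\N_0)) \oplus_1 M(\mb T),$$
where $\mc T_1$ denotes the trace-class operators; concretely, every bounded functional $\omega$ on $C^*(S)$ splits uniquely as $\omega = \Tr(T\,\cdot\,) + \mu\circ\pi$ with $T\in\mc T_1$ and $\mu\in M(\mb T)$, and the norm is additive: $\|\omega\| = \|T\|_1 + \|\mu\|$. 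All three assertions of the proposition will follow by identifying these summands for $\omega_\phi$.

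Assume $\omega_\phi$ extends to a bounded functional on $C^*(S)$. Testing against the matrix units $e_{kl} = S^k(S^*)^l - S^{k+1}(S^*)^{l+1}$ gives $\omega_\phi(e_{kl}) = h(k,l)$, so the restriction $\omega_\phi|_{\mc K}$ is identified, under $\mc K^* = \mc T_1$, with $T = h^t$; in particular $h$ is trace class with $\|T\|_1 = \|h\|_1$. The difference $\omega_\phi - \Tr(h^t\,\cdot\,)$ vanishes on $\mc K$ and hence factors as $\mu\circ\pi$ for a unique $\mu\in M(\mb T)$. Since $\Tr(h^t\,\cdot\,)$ is weak-$*$ continuous and $S^m(S^*)^n = \sum_{i\geq 0} e_{m+i,n+i}$ converges strongly, one computes
$$\omega_\phi(S^m(S^*)^n) = \sum_{i=0}^\infty h(m+i,n+i) + \hat\mu(m-n).$$
The partial sums telescope to $\phi(m,n) - \phi(m+k,n+k)$, so $\phi_0(m-n) := \lim_k \phi(m+k,n+k)$ exists and equals $\hat\mu(m-n)$; in particular $\phi_0 \in B(\Z)$ with $\|\phi_0\|_{B(\Z)} = \|\mu\|$.

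Conversely, given $h$ trace class and $\phi_0\in B(\Z)$ represented by $\mu\in M(\mb T)$, set $\omega := \Tr(h^t\,\cdot\,) + \mu\circ\pi$ on $C^*(S)$; the same computation together with the hypothesis $\lim_k \phi(m+k,n+k) = \phi_0(m-n)$ verifies $\omega(S^m(S^*)^n) = \phi(m,n)$, so $\omega$ extends $\omega_\phi$. The norm identity $\|\omega_\phi\| = \|h\|_1 + \|\phi_0\|_{B(\Z)}$ is then immediate from the $M$-ideal decomposition. The main obstacle is precisely this additive norm identity: a triangle-inequality argument from $\omega_\phi = \Tr(h^t\,\cdot\,) + \mu\circ\pi$ only yields $\leq$. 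What upgrades the inequality to equality is the $M$-ideal structure of $\mc K$ in $C^*(S)$—equivalently, the central projection in $C^*(S)^{**}$ corresponding to $\mc K^{**}$ inducing an $L$-projection on the dual.
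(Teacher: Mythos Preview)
Your proof is correct and follows precisely the general $M$-ideal route that the paper itself identifies as the underlying phenomenon: the paper remarks that the proposition is a special case of the isometric decomposition $A^* \simeq I^* \oplus_1 (A/I)^*$ for a \Cs-algebra extension, citing the $M$-ideals book of Harmand--Werner--Werner. You make this explicit by invoking that $\mc K$ is an $M$-ideal in $C^*(S)$, identifying the $\mc K^*$-component of $\omega_\phi$ with $h^t$ via the matrix units, and reading off $\phi_0$ as the Fourier transform of the quotient piece $\mu$.

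The paper, after noting this general route, instead gives a ``more direct'' proof: for the forward direction it constructs $\omega = \Tr(h^t\,\cdot\,) + \omega_0\circ\pi$ exactly as you do, but for the reverse direction and for the norm equality it appeals to Proposition~2.8 of \cite{HSS}, which already provides the splitting $\omega_\phi(S^m(S^*)^n) = \int_{\mb T} z^{m-n}\,d\mu + \Tr(S^m(S^*)^n T)$ together with $\|\omega_\phi\| = \|\mu\| + \|T\|_1$. In effect the paper outsources the $M$-ideal computation to that reference, whereas you carry it out from first principles. Your version is more self-contained and makes the source of the additive norm identity transparent; the paper's version is shorter but relies on the cited result doing the same work behind the scenes.
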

\begin{proof}
%\sloppy{
The proposition is actually a special case of a general phenomenon. Given an extension ${0 \rightarrow I \rightarrow A \rightarrow A/I \rightarrow 0}$ of \Cs-algebras, then $A^* \simeq I^* \oplus_1 (A/I)^*$ isometrically. The extension under consideration in our proposition is \eqref{eq:extension}. The general theory is described in the book \cite{MR1238713}. We have included a more direct proof.
%}

Suppose first $h$ is of trace class, and $\phi_0\in B(\Z)$. Let $\mu\in M(\mb T)$ be given by \eqref{eq:BM}, and define $\omega_0 \in C(\mb T)^*$ by
$$
\omega_0(f) = \int_{\mb T} f d\mu \quad\text{for all } f\in C(\mb T).
$$
Define a functional $\omega_1$ on $C^*(S)$ by $\omega_1(x) = \Tr(h^t x)$ for $x\in C^*(S)$, and also let $\omega = \omega_1 + \omega_0\circ\pi$. Observe that
$$
\omega_1(S^m(S^*)^n) = \sum_{k=0}^\infty h(m+k,n+k).
$$
It follows that
$$
\omega(S^m(S^*)^n) = \sum_{k=0}^\infty h(m+k,n+k) + \phi_0(m-n) = \phi(m,n),
$$
so that $\omega = \omega_\phi$. Hence $\omega_\phi$ extends to a bounded functional on $C^*(S)$.

Suppose instead that $\omega_\phi$ extends to a bounded functional on $C^*(S)$. Proposition 2.8 in \cite{HSS} ensures the existence of a complex Borel measure $\mu$ on $M(\mb T)$ and a trace class operator $T$ on $\ell^2(\N_0)$ such that
$$
\omega_\phi(S^m(S^*)^n) = \int_{\mb T} z^{m-n} d\mu(z) + \Tr(S^m(S^*)^nT) \quad\text{for all } m,n\in\N_0.
$$
From this we get that $T^t_{mn} = h(m,n)$, where $T^t$ is the transpose of $T$, and
$$
\phi_0(m-n) = \int_{\mb T} z^{m-n} d\mu(z).
$$
Hence $h$ is of trace class and $\phi_0\in B(\Z)$. From \cite{HSS} we also have $||\omega_\phi|| = ||\mu|| + ||T||_1$, which concludes our proof, since
$$
||\mu|| = ||\phi_0|| \quad\text{and}\quad ||T^t||_1 = ||h||_1.
$$
\end{proof}

\begin{prop}\label{prop:norms}
Let $\phi:\F_\infty\to\C$ be a radial function, and let $\tilde\phi:\N_0 \times\N_0\to\C$ be as in Definition~\ref{defi:radial}. Then $\|\omega_{\tilde\phi}\| = \|\phi\|_\HS$.
\end{prop}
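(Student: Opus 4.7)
The plan is to match the two known results, Theorem~\ref{thm:HSS} from \cite{HSS} and Proposition~\ref{prop:HSS28}, term by term. First, I will compute the two ingredients that appear in Proposition~\ref{prop:HSS28} for the Hankel kernel $\tilde\phi(m,n)=\dot\phi(m+n)$. The shift-difference is
\[
(\tilde\phi-\tilde\phi\circ\sigma)(m,n)=\dot\phi(m+n)-\dot\phi(m+n+2),
\]
which is precisely the Hankel matrix $h=(h_{ij})$ appearing in Theorem~\ref{thm:HSS}. So $h$ is the \emph{same} object in both statements; in particular, its trace class membership is simultaneously the obstruction to $\phi\in B_2(\F_\infty)$ and to the boundedness of $\omega_{\tilde\phi}$.

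Second, I will compute the limit function $\tilde\phi_0(m-n)=\lim_{k\to\infty}\tilde\phi(m+k,n+k)=\lim_{k\to\infty}\dot\phi(m+n+2k)$. Assuming $h$ is trace class, Theorem~\ref{thm:HSS} yields the decomposition $\dot\phi(n)=c_++c_-(-1)^n+\dot\psi(n)$ with $\dot\psi(n)\to 0$, so
\[
\tilde\phi_0(m-n)=c_++c_-(-1)^{m+n}=c_++c_-(-1)^{m-n},
\]
that is, $\tilde\phi_0(k)=c_++c_-(-1)^k$ on $\Z$. Under the identification $B(\Z)\simeq M(\mb T)$ from \eqref{eq:BM}, this is the Fourier-Stieltjes transform of the discrete measure $c_+\delta_1+c_-\delta_{-1}$ on $\mb T$, whose total variation is exactly $|c_+|+|c_-|$. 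Hence $\tilde\phi_0\in B(\Z)$ with $\|\tilde\phi_0\|_{B(\Z)}=|c_+|+|c_-|$.

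Combining the two results: if $\phi\in B_2(\F_\infty)$, the Hankel matrix is trace class by Theorem~\ref{thm:HSS}, so the hypotheses of Proposition~\ref{prop:HSS28} are met, and
\[
\|\omega_{\tilde\phi}\|=\|h\|_1+\|\tilde\phi_0\|_{B(\Z)}=\|h\|_1+|c_+|+|c_-|=\|\phi\|_{B_2},
\]
where the last equality is the norm formula in Theorem~\ref{thm:HSS}. Conversely, if $\omega_{\tilde\phi}$ is bounded, Proposition~\ref{prop:HSS28} forces $h$ to be trace class, which via Theorem~\ref{thm:HSS} places $\phi$ in $B_2(\F_\infty)$, and the same equality then holds (with both sides infinite if one is). There is no real obstacle here, beyond verifying that the auxiliary datum $\tilde\phi_0$ always lies in $B(\Z)$ as soon as $h\in\mc L^1$; this is automatic from the explicit two-parameter form $c_++c_-(-1)^k$ forced by the radial Hankel structure, so the identification of the two norms is a direct bookkeeping exercise.
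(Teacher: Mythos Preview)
Your proposal is correct and follows essentially the same approach as the paper: both identify the Hankel matrix $h$ as the common obstruction, compute $\tilde\phi_0(k)=c_++c_-(-1)^k$ from the decomposition in Theorem~\ref{thm:HSS}, recognize this as the transform of $c_+\delta_1+c_-\delta_{-1}\in M(\mb T)$ with total variation $|c_+|+|c_-|$, and then match the two norm formulas term by term.
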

\begin{proof}
Let $h_{ij} = \tilde\phi(i,j) - \tilde\phi(i+1,j+1)$. From Theorem~\ref{thm:HSS} and Proposition~\ref{prop:HSS28} we see that it suffices to consider the case where $h$ is the matrix of a trace class operator, since otherwise $\|\omega_{\tilde\phi}\| = \|\phi\|_\HS = \infty $. If $h$ is of trace class, then we let $\tilde\phi_0(n) = \lim_k \tilde\phi(k+n,k)$. From Theorem~\ref{thm:HSS} and Proposition~\ref{prop:HSS28} it follows that
\begin{align*}
\|\omega_{\tilde\phi}\| &= \| h \|_1 + \|\tilde\phi_0\|_{B(\Z)} \\
\|\phi\|_\HS &= \| h \|_1 + |c_+| + |c_-|,
\end{align*}
where $c_\pm$ are the constants obtained in Theorem~\ref{thm:HSS}. It follows from \eqref{eq:pres} that
$$
\tilde\phi_0(n) = c_+ + (-1)^n c_-.
$$
Now we only need to see why $|c_+| + |c_-| = \|\tilde\phi_0\|_{B(\Z)}$. Let $\nu\in C(\mb T)^*$ be the functional given by $\nu(f) = c_+f(1) + c_-f(-1)$ for all $f\in C(\mb T)$. Observe that $\nu(z\mapsto z^n) = c_+ + (-1)^nc_-$. Hence $\nu$ corresponds to $\tilde\phi_0$ under the isometric isomorphism $B(\Z) \simeq C(\mb T)^*$. It is easily seen that $\|\nu\| = |c_+| + |c_-|$. So
$$
\|\tilde\phi_0\|_{B(\Z)} = |c_+| + |c_-|.
$$
This completes the proof.
\end{proof}

\subsection{The linear bound}

We now restrict our attention to functions $\phi:\N_0^2\to\C$ of the form $\phi(m,n) = \dot\phi(m+n)$ for some function $\dot\phi:\N_0\to\C$. Recall that such functions are called \emph{Hankel functions}.

A function $\phi:\N_0\times\N_0\to\C$ is called \emph{linearly bounded} if there are constants $a,b\geq 0$ such that $|\phi(m,n)| \leq b + a(m+n)$ for all $m,n\in\N_0$.
\begin{prop}\label{prop:bound}
If $\phi$ is a Hankel function and $\phi\in\mc S$, then $\phi$ is linearly bounded.
\end{prop}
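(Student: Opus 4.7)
The plan is to combine the decomposition $\phi = \psi + \theta$ from $\mc S$ with the Hankel constraint on $\phi$. Since $\phi$ is hermitian and Hankel, $\dot\phi$ is real-valued, so I would first replace $\psi, \theta$ by their real parts: real parts of hermitian positive (respectively conditionally negative) definite kernels remain positive (resp.\ conditionally negative) definite, and all four conditions in Definition~\ref{defi:S} pass to the real parts. Thus without loss of generality $\psi$ and $\theta$ are real. Running the Cauchy--Schwarz argument from the proof of Lemma~\ref{lem:Sclosed} then yields the pointwise bound $0 \le \theta(k, l) \le \theta(0, 0) = \dot\phi(0)$, so $\theta$ is bounded.

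Next, Proposition~\ref{prop:thm2}(3) applied to $\psi$ produces vectors $(\eta_i)_{i \ge 0}$ in a real Hilbert space and a conditionally negative definite function $\psi_0 \colon \Z \to \R$ with $\psi_0(0) = 0$ such that, writing $T_n := \sum_{i = 0}^{n - 1} \|\eta_i\|^2$ and $F(k, l) := \sum_{i = 0}^\infty \|\eta_{k + i} - \eta_{l + i}\|^2 \ge 0$,
$$
\psi(k, l) = \tfrac12 T_k + \tfrac12 T_l + \tfrac12 F(k, l) + \psi_0(k - l).
$$
Symmetry of $\psi$ forces $\psi_0$ to be even, and testing conditional negative definiteness at $\{0, n\} \subseteq \Z$ with coefficients $1, -1$ gives $\psi_0 \ge 0$; in particular $\psi \ge 0$. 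The Hankel property of $\phi$ together with boundedness of $\theta$ then implies, for all $m + n = m' + n'$,
$$
|\psi(m, n) - \psi(m', n')| = |\theta(m', n') - \theta(m, n)| \le \theta(0, 0),
$$
so $\psi$ is $\theta(0, 0)$-close to being Hankel itself.

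The decisive step is to apply this estimate to the pairs $(2n, 0)$ and $(n, n)$: from the explicit formula,
$$
\psi(2n, 0) - \psi(n, n) = \tfrac12 (T_{2n} - 2 T_n) + \tfrac12 F(2n, 0) + \psi_0(2n),
$$
and since $F(2n, 0) \ge 0$ and $\psi_0(2n) \ge 0$, discarding these two terms gives the recursion $T_{2n} \le 2 T_n + 2 \theta(0, 0)$. Iterating along powers of two yields $T_{2^k} \le 2^k (T_1 + 2 \theta(0, 0))$, and monotonicity of $T_n$ in $n$ then forces $T_n = O(n)$.

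Finally, for even $N$ one has $\dot\phi(N) = \psi(N/2, N/2) + \theta(N/2, N/2) = T_{N/2} + O(1) = O(N)$; for odd $N$, I would evaluate $\phi$ at $((N + 1)/2, (N - 1)/2)$, where the $F$-contribution is bounded by the convergent series $\sum_{j \ge 0} \|\eta_{j + 1} - \eta_j\|^2 < \infty$ coming from Lemma~\ref{lem:3} and $\psi_0(1)$ is a fixed constant, giving the same linear estimate. Since $\psi \ge 0$ and $\theta \ge 0$ force $\dot\phi \ge 0$, the two-sided bound $|\dot\phi(N)| \le b + a N$ follows. The hard part is extracting this clean recursion: it requires showing simultaneously that $\psi_0 \ge 0$ and $F \ge 0$, so that the two a priori uncontrolled quantities can be harmlessly discarded.
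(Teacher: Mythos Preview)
Your proof is correct and takes a genuinely different route from the paper's.

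The paper works with the increment $\dot h(k) = \dot\phi(k+2) - \dot\phi(k)$ and decomposes $\phi\circ\sigma - \phi$ as $h_1 - h_2$ with $h_1 = \psi\circ\sigma-\psi$ and $h_2 = \theta-\theta\circ\sigma$ both positive definite. It then shows that the kernel $(m,n)\mapsto \dot h(m+n) + \|h_2\|_1\,\delta_{mn}$ is positive definite, applies Cauchy--Schwarz to obtain $|\dot h(k)|^2 \le e(\dot h(2k)+d)$, and runs a contradiction argument: if $\dot h$ were unbounded, iterating this inequality along $k\mapsto 2k$ would force doubly-exponential growth, incompatible with the a~priori quadratic bound $|h_1(m,n)|\le c^2(m+1)(n+1)$.

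Your argument instead invokes the full structural description of $\psi$ from Proposition~\ref{prop:thm2}(3), then exploits the fact that $\psi$ is $\theta(0,0)$-close to being Hankel by comparing $\psi(2n,0)$ with $\psi(n,n)$. Because the $F$-term and the $\psi_0$-term are both nonnegative, they can be dropped, yielding the clean doubling recursion $T_{2n}\le 2T_n + 2\theta(0,0)$ directly, with no contradiction step. This is arguably more transparent: it isolates exactly which structural feature (nonnegativity of $F$ and $\psi_0$) makes the linear bound work. The paper's approach, on the other hand, uses less of the fine structure of $\psi$ (only Corollary~\ref{cor:4} and Lemma~\ref{lem:3} rather than the full representation), and its shape adapts more readily to the $\F_n$ case in Section~\ref{sec:finite}, where the analogue of Proposition~\ref{prop:thm2}(3) is not developed.
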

\begin{proof}
Write $\phi = \psi + \theta$ as in Definition~\ref{defi:S}. Note that
$$
\phi\circ\sigma - \phi = (\psi\circ\sigma - \psi) - (\theta - \theta\circ\sigma),
$$
where $h_1 = \psi\circ\sigma - \psi$ and $h_2 = \theta - \theta\circ\sigma$ are positive definite. As in the definition of a Hankel function, write $\phi(m,n) = \dot\phi(m+n)$, and let $\dot h(m) = \dot\phi(m+2) - \dot\phi(m)$, so that
$$
\dot h(m+n) = (\phi\circ\sigma - \phi)(m,n) = h_1(m,n) - h_2(m,n).
$$
We will now prove that $\dot h$ is bounded, and this will lead to the conclusion of the proposition.

From Corollary~\ref{cor:4} and Lemma~\ref{lem:3} we see that there are vectors $\xi_k,\eta_k$ in a Hilbert space such that
$$
h_1(m,n) = \la \eta_m,\eta_n\ra,
\quad
h_2(m,n) = \la \xi_m,\xi_n\ra
$$
for all $m,n\in\N_0$, and
$$
\sum_{k=0}^\infty ||\eta_k - \eta_{k+1}||^2 < \infty,
\quad
\sum_{k=0}^\infty ||\xi_k||^2 < \infty.
$$
From this we see that $h_2$ is the matrix of a positive trace class operator. Also, there is $c > 0$ such that $||\eta_k - \eta_{k+1}|| \leq c$ for every $k$ and $||\eta_0|| \leq c$, so we get the linear bound $||\eta_k|| \leq c(k+1)$. From the Cauchy-Schwarz inequality we get
\begin{align}\label{eq:h1}
|h_1(m,n)| \leq c^2(m+1)(n+1).
\end{align}

Since $h_2 \leq ||h_2|| I \leq ||h_2||_1 I$ (as positive definite matrices, where $I$ is the identity operator), we deduce that the function
$$
(m,n)\mapsto \dot h(m+n) + ||h_2||_1\delta_{mn} = h_1(m,n) + (||h_2||_1\delta_{mn} - h_2(m,n))
$$
is a positive definite kernel. By the Cauchy-Schwarz inequality we have
\begin{align}\label{eq:cs}
|\dot h(k)|^2 \leq (\dot h(0) + d)(\dot h(2k) + d)
\end{align}
for every $k\geq 1$, where $d = ||h_2||_1$. If $e = \dot h(0) + d$ is zero, then clearly $\dot h(k) = 0$ when $k\geq 1$. Suppose $e > 0$. Then we may rewrite \eqref{eq:cs} as
$$
\dot h(2k) \geq \frac{|\dot h(k)|^2}{e} - d.
$$
We claim that $\dot h$ is bounded by $2e+d$. Suppose by contradiction that $|\dot h(k_0)| > 2e + d$ for some $k_0 \geq 1$. Then by induction over $n$ we may prove that for any $n\in\N_0$
\begin{align}\label{eq:exp}
\dot h(k_0 2^{n+1}) \geq 2^{2^n} (2e+d).
\end{align}
For $n = 0$ we have
$$
\dot h(2k_0) \geq \frac{|\dot h(k_0)|^2}{e} - d \geq \frac{(2e+d)^2}{e} - d \geq 4e + 3d \geq 2(2e+d).
$$
For $n\geq 1$ we get (using our induction hypothesis)
\begin{align*}
\dot h(k_02^{n+1}) & \geq \frac{|\dot h(k_02^n)|^2}{e} - d \geq (2^{2^{n-1}} )^2\frac{(2e+d)^2}{e} - d \\
& \geq 2^{2^n}(4e + 4d) - d \geq 2^{2^n}(2e+d).
\end{align*}
Using \eqref{eq:h1} we observe that for every $m\in\N_0$ we have
$$
|\dot h(m) | \leq |h_1(m,0)| + |h_2(m,0)| \leq c^2(m+1) + d.
$$
Since $e>0$, this contradicts \eqref{eq:exp}. This proves the claim. It follows that
$$
|\dot\phi(2k)| \leq |\dot\phi(0)| + (2e + d)k
$$
and
$$
|\dot\phi(2k+1)| \leq |\dot\phi(1)| + (2e + d)k.
$$
With $b = \max\{|\dot\phi(0)|,|\dot\phi(1)|\}$ and $a = 2e+d$ this shows that
$$
|\dot\phi(m)| \leq b + am,
$$
and thus $|\phi(m,n)| \leq b + a(m+n)$, which proves the proposition.

\end{proof}

\begin{thm}\label{thm:main-finite}
If $\phi:\F_\infty\to\R$ is a radial function such that $\|e^{-t\phi}\|_\HS \leq 1$ for each $t> 0$, then there are constants $a,b\geq0$ such that $\phi(x) \leq b + a|x|$ for all $x\in\F_\infty$. Here $|x|$ denotes the word length function on $\F_\infty$.
\end{thm}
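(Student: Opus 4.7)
The plan is to reduce the theorem to Proposition~\ref{prop:bound} by passing from $\phi$ on $\F_\infty$ to the associated Hankel kernel $\tilde\phi$ on $\N_0^2$ and applying the machinery relating the Herz-Schur norm to the Toeplitz-algebra functional norm.

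First I would observe that since $\phi$ is radial, so is $e^{-t\phi}$ for every $t>0$, and an easy check shows that the Hankel kernel associated to $e^{-t\phi}$ is exactly $e^{-t\tilde\phi}$, that is, $\widetilde{e^{-t\phi}}(m,n) = e^{-t\dot\phi(m+n)}$. Applying Proposition~\ref{prop:norms} to the radial function $e^{-t\phi}$ then gives
$$
\|\omega_{e^{-t\tilde\phi}}\| \;=\; \|e^{-t\phi}\|_{B_2} \;\leq\; 1 \qquad \text{for every } t>0.
$$

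Next I would invoke Theorem~\ref{thm:split}: since $\tilde\phi$ is hermitian (it is real-valued and symmetric) and $\|\omega_{e^{-t\tilde\phi}}\| \leq 1$ for all $t>0$, we conclude that $\tilde\phi$ belongs to the set $\mc S$ of Definition~\ref{defi:S}. Crucially, $\tilde\phi$ is by construction a Hankel function, so Proposition~\ref{prop:bound} applies and yields constants $a,b \geq 0$ such that
$$
|\tilde\phi(m,n)| \;\leq\; b + a(m+n) \qquad \text{for all } m,n\in\N_0.
$$

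Finally I would specialize to $(m,n) = (|x|,0)$: this gives $|\phi(x)| = |\dot\phi(|x|)| = |\tilde\phi(|x|,0)| \leq b + a|x|$, and in particular $\phi(x) \leq b + a|x|$ for every $x\in\F_\infty$. There is no real obstacle here once the earlier results are in place; the whole proof is a short assembly. The actual work has already been done in Proposition~\ref{prop:bound} (where the key trick is the Cauchy-Schwarz bootstrap on the Hankel entries $\dot h$) and in Proposition~\ref{prop:norms} (matching the Herz-Schur norm with the Toeplitz functional norm), so the theorem follows in a few lines.
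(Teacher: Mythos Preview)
Your proposal is correct and follows essentially the same approach as the paper's own proof: pass to the Hankel kernel $\tilde\phi$, use Proposition~\ref{prop:norms} to transfer the Herz-Schur bound to $\|\omega_{e^{-t\tilde\phi}}\|\le 1$, apply Theorem~\ref{thm:split} to conclude $\tilde\phi\in\mc S$, and then invoke Proposition~\ref{prop:bound} for the linear bound. The only extra detail you spell out (that $\widetilde{e^{-t\phi}}=e^{-t\tilde\phi}$ and the specialization to $(|x|,0)$) is implicit in the paper's argument.
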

\begin{proof}
Suppose $\phi:\F_\infty\to\R$ is a radial function such that $\|e^{-t\phi}\|_\HS \leq 1$ for each $t> 0$, and let $\tilde\phi$ be as in Definition~\ref{defi:radial}. First observe that $\tilde\phi$ is real, symmetric, and thus hermitian. From Proposition~\ref{prop:norms} we get that $\| \omega_{e^{-t\tilde\phi}} \|_{B_2} \leq 1$ for every $t > 0$, so Theorem~\ref{thm:split} implies that $\tilde\phi\in \mc S$. Since $\tilde\phi$ is a Hankel function, Proposition~\ref{prop:bound} ensures that
$$
|\tilde\phi(m,n)| \leq b + a(m+n) \quad\text{for all } m,n\in\N_0
$$
for some constants $a$ and $b$. This shows that
$$
\phi(x) \leq b + a|x| \quad\text{for all } x\in\F_\infty.
$$
\end{proof}

This finishes the proof of Theorem~\ref{thm:B} in the case of the free group on infinitely many generators.

\section{Radial semigroups of Herz-Schur multipliers on \texorpdfstring{$\F_n$}{Fn}}\label{sec:finite}

The proof of Theorem~\ref{thm:B} for the finitely generated free groups is more technical than the proof concerning $\F_\infty$, but the general approach is the same, and most of the steps in the proof can be deduced from what we have already done for $\F_\infty$. In order to do so we introduce the transformations $F$ and $G$ that, loosely speaking, translate between the two cases, the finite and the infinite.

\subsection{The transformations \texorpdfstring{$F$}{F} and \texorpdfstring{$G$}{G}}\label{sec:FG}

From now on we fix a natural number $q$ with $2\leq q < \infty$. If the number of generators of the free group under consideration is $n$, we will let $q = 2n - 1$. The parametrization using $q$ instead of $n$ is adapted from \cite{HSS}. As before, the free groups will enter the picture quite late (Theorem~\ref{thm:q-HSS}), and we will mainly focus on functions $\phi:\N_0\times\N_0\to\C$.

We still denote the unilateral shift operator on $\ell^2(\N_0)$ by $S$. For each $m,n\in\N_0$ we let $S_{m,n}$ denote the operator
$$
S_{m,n} = \left(1-\frac1q\right)^{-1}\left(S^m(S^*)^n - \frac1q S^*S^m(S^*)^nS\right).
$$
Observe that
\begin{align}\label{eq:Smn}
S_{m,n} = \begin{cases}
S^m(S^*)^n & \text{if } \min\{m,n\}=0, \\
\left(1-\frac1q\right)^{-1}\left(S^m(S^*)^n - \frac1q S^{m-1}(S^*)^{n-1}\right) & \text{if } m,n\geq 1,
\end{cases}
\end{align}
and
\begin{align}\label{eq:recursion}
S^m(S^*)^n = \left( 1 - \frac1q \right)S_{m,n} + \frac1q S^{m-1}(S^*)^{n-1}, \quad\text{when } m,n\geq 1.
\end{align}
It follows by induction over $\min\{m,n\}$ that $S^m(S^*)^n \in \mr{span}\{S_{k,l} \mid k,l\in\N_0\}$ for all $m,n\in\N_0$, so $\mr{span}\{S_{k,l} \mid k,l\in\N_0\} = D$, where $D$ is given by \eqref{eq:D}.

When $\phi:\N_0^2\to\C$ is a function we let $\chi_\phi$ denote the linear functional defined on $D$ by
$$
\chi_\phi(S_{m,n}) = \phi(m,n),
$$
and if it extends to a bounded functional on $C^*(S)$, we also denote the extension by $\chi_\phi$.

Let $\mc V$ be the set of kernels on $\N_0$, that is, $\mc V = \C^{\N_0\times\N_0} = \{ \phi:\N_0^2\to\C \}$. Then $\mc V$ is a vector space over $\C$ under pointwise addition and scalar multiplication. We equip $\mc V$ with the topology of pointwise convergence.

Recall that $\sigma:\N_0^2 \to \N_0^2$ is the shift map $\sigma(k,l) = (k+1,l+1)$. We now define operators $\tau$ and $\tau^*$ on $\mc V$. For $\phi\in\mc V$ the operator $\tau^*$ is given by $\tau^*(\phi) = \phi\circ\sigma$, and $\tau(\phi)$ given by
$$
\tau(\phi)(k,l) = \begin{cases}
\phi(k-1,l-1), & k,l\geq 1, \\
0, & \min\{k,l\} = 0.
\end{cases}
$$
Then $\tau^*\circ\tau = \id$, and $(\tau\circ\tau^*)(\phi) = 1_{\N\times\N} \phi$. We have the following rules
$$
(\tau\circ\tau^*)^2 = \tau\circ\tau^*, \quad \tau^*\circ\tau\circ\tau^* = \tau^*, \quad \tau\circ\tau^*\circ\tau = \tau.
$$

Each element of $B(\ell^2(\N_0))$ may be identified with its matrix representation (with respect to the canonical orthonormal basis) and may in this way be considered as an element of $\mc V$. Under this identification $\tau$ and $\tau^*$ restrict to maps on $B(\ell^2(\N_0))$ given by $\tau(A) = SAS^*$ and $\tau^*(A) = S^*AS$. Clearly, $\tau$ is an isometry on the bounded operators. As noted in \citep{HSS}, it is also an isometry on the trace class operators $B_1(\ell^2(\N_0))$ (with respect to the trace norm). The operator
$$
\left( 1 - \frac\tau\alpha \right)^{-1} = \sum_{n=0}^\infty \frac{\tau^n}{\alpha^n}
$$
on $B_1(\ell^2(\N_0))$ is therefore well-defined when $\alpha > 1$, and its norm is bounded by $(1 - \frac1\alpha)^{-1}$. To shorten notation we let
\begin{align}\label{eq:F}
F = \left(1 - \frac1q\right) \left( \id - \frac\tau q \right)^{-1} = \left(1 - \frac1q\right)\sum_{n=0}^\infty \frac{\tau^n}{q^n}.
\end{align}

We note that $F$ defined by \eqref{eq:F} also makes sense as an invertible operator on $\mc V$ as $(\tau^n\phi)(k,l) = 0$ for $n \gg 0$.

Let $G$ be the operator on $\mc V$ defined recursively by
$$
G\phi(m,n) = \phi(m,n) \quad\text{if } \min\{m,n\} = 0
$$
and
$$
G\phi(m,n) = \left(1-\frac1q\right) \phi(m,n) + \frac1q G\phi(m-1,n-1), \quad\text{if } m,n\geq 1,
$$
when $\phi\in\mc V$. We can reconstruct $\phi$ from $G\phi$. In fact $G^{-1}\phi$ is given by
\begin{align}\label{eq:Ginv}
G^{-1}\phi(k,l) = \begin{cases}
\phi(k,l), & \min\{k,l\} = 0, \\
\left(1 - \frac1q\right)^{-1} \left(\phi(k,l) - \frac1q\ \phi(k-1,l-1)\right), & k,l\geq 1
\end{cases}
\end{align}

We may express $G^{-1}$ in terms of $\tau$ and $\tau^*$. We have
\begin{align}\label{eq:G}
G^{-1} = \id + \frac{1}{q-1} (\tau\circ\tau^* - \tau) = \left(\id + \frac{1}{q-1}\  \tau\circ\tau^*\right) \circ \left(\id - \frac1q \tau\right).
\end{align}
Using $(\tau\circ\tau^*)^n = \tau\circ\tau^*$ we obtain
$$
\left(\id - \frac1q \tau\circ\tau^*\right)^{-1} = \id + \left( \frac1q + \frac{1}{q^2} + \cdots \right)\tau\circ\tau^* = \id + \frac{1}{q-1}\  \tau\circ\tau^*,
$$
so it follows that
$$
G =   \left(\id - \frac1q \tau\right)^{-1} \circ \left(\id + \frac{1}{q-1}\  \tau\circ\tau^*\right)^{-1} =  \left(\id - \frac1q \tau\right)^{-1} \circ \left(\id - \frac1q\ \tau\circ\tau^*\right).
$$

We now record some elementary facts about $F$ and $G$, which will be used later on without reference.
\begin{lem}
The transformations $F$ and $G$ are linear and continuous on $\mc V$. Furthermore, $G$ takes the constant function $1$ to itself. Also, $G\phi(0,0) = \phi(0,0)$ for every $\phi\in\mc V$.
\end{lem}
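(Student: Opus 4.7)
The plan is to read each claim directly off the defining formulas and the recursion, exploiting that although the series defining $F$ and the recursion defining $G$ look infinite, at each fixed pair $(k,l)$ only finitely many evaluations of $\phi$ are actually involved.

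First I would handle linearity. The operator $\tau$ is visibly linear on $\mc V$ (it just shifts indices and inserts zeros on the boundary), and $\tau^*$ likewise. Then $F$ and $G^{-1}$ are linear because they are finite linear combinations and compositions of $\tau$, $\tau^*$ and $\id$: from \eqref{eq:F}, $F\phi(k,l)=(1-\tfrac1q)\sum_{n=0}^{\min\{k,l\}} q^{-n}\phi(k-n,l-n)$ (the tail vanishes because $\tau^n\phi$ is supported in $\N_{\ge n}\times\N_{\ge n}$), and from \eqref{eq:G}, $G^{-1}\phi$ is an explicit linear combination of $\phi$ and $\tau\circ\tau^*\phi$ and $\tau\phi$. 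Since $G=(G^{-1})^{-1}$ and $G^{-1}$ is linear and bijective on $\mc V$ (the explicit inverse is furnished by the recursion), $G$ is linear.

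Next I would establish continuity in the product topology on $\mc V=\C^{\N_0\times\N_0}$. For this it is enough to check that each evaluation functional $\phi\mapsto (F\phi)(k,l)$ and $\phi\mapsto (G\phi)(k,l)$ is continuous, i.e.\ depends on finitely many coordinates $\phi(i,j)$ continuously. For $F$ this is the finite sum displayed above. For $G$ I would argue by induction on $\min\{k,l\}$ using the recursion: when $\min\{k,l\}=0$, $(G\phi)(k,l)=\phi(k,l)$; in the inductive step $(G\phi)(k,l)=(1-\tfrac1q)\phi(k,l)+\tfrac1q(G\phi)(k-1,l-1)$ is a continuous (indeed affine) function of finitely many coordinates of $\phi$ by the induction hypothesis. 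So both maps are continuous.

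Finally, $G\phi(0,0)=\phi(0,0)$ is immediate from the base case of the recursive definition, and $G(\mathbf 1)=\mathbf 1$ follows by the same induction on $\min\{k,l\}$: on the boundary $G(\mathbf 1)(k,l)=1$ by the base case, and assuming $G(\mathbf 1)(k-1,l-1)=1$ the recursion gives $G(\mathbf 1)(k,l)=(1-\tfrac1q)\cdot 1+\tfrac1q\cdot 1=1$. There is no real obstacle here; the only point to be a little careful about is that "continuity on $\mc V$" really does mean pointwise/product topology, which reduces to the finite-support observation above rather than any summability estimate.
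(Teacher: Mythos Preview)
Your proof is correct. The paper does not actually give a proof of this lemma; it is stated as one of several ``elementary facts about $F$ and $G$, which will be used later on without reference,'' so your argument supplies exactly the routine verification the paper omits. One tiny quibble: in the inductive step for continuity of $G$ you call the dependence ``affine'' when it is in fact linear in the coordinates of $\phi$, but this is harmless.
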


\begin{lem}\label{lem:Glim}
Let $\phi\in\mc V$, and fix $m,n\in\N$. If $\lim_{k\to\infty} G\phi(m+k,n+k)$ exists, then $\lim_{k\to\infty} \phi(m+k,n+k)$ exists, and the limits are equal.
\end{lem}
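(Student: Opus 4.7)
The plan is to invert the defining recursion of $G$ and read off $\phi$ on a diagonal as a finite linear combination of two consecutive values of $G\phi$ on the same diagonal. Recall that for $k,l \geq 1$ we have
$$
G\phi(k,l) = \left(1 - \frac{1}{q}\right)\phi(k,l) + \frac{1}{q}\,G\phi(k-1,l-1),
$$
which rearranges (or equivalently follows directly from the formula \eqref{eq:Ginv} for $G^{-1}$) to
$$
\phi(k,l) = \frac{q}{q-1}\,G\phi(k,l) - \frac{1}{q-1}\,G\phi(k-1,l-1).
$$

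Now I would fix $m,n \in \N$ and substitute $(k,l) = (m+k,n+k)$. Because $m,n \geq 1$, the entries $m+k$ and $n+k$ are at least $1$ for every $k \geq 0$, so the inverted recursion applies and gives
$$
\phi(m+k,n+k) = \frac{q}{q-1}\,G\phi(m+k,n+k) - \frac{1}{q-1}\,G\phi(m+k-1,n+k-1).
$$

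Setting $L = \lim_{k\to\infty} G\phi(m+k,n+k)$, the assumed limit is unaffected by a fixed shift in the index, so $\lim_{k\to\infty} G\phi(m+k-1,n+k-1) = L$ as well. Letting $k \to \infty$ in the displayed identity therefore yields
$$
\lim_{k\to\infty} \phi(m+k,n+k) = \frac{q}{q-1}L - \frac{1}{q-1}L = L,
$$
which is the claim. There is really no obstacle here: the entire content of the lemma is that $G$ only couples values on a single diagonal of $\N_0^2$ and acts on that diagonal by a simple two-term recurrence with summable coefficients, so pointwise convergence along the diagonal for $G\phi$ immediately transfers to $\phi$.
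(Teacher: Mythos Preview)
Your proof is correct and essentially identical to the paper's own argument: both invert the recursion for $G$ to express $\phi(m+k,n+k)$ as $\frac{q}{q-1}G\phi(m+k,n+k)-\frac{1}{q-1}G\phi(m+k-1,n+k-1)$ and then pass to the limit. There is nothing to add.
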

\begin{proof}
We may assume that $m,n\geq 1$. Suppose $\lim_k G\phi(m+k,n+k)$ exists. Since
$$
\phi(m+k,n+k) = \left(1 - \frac1q\right)^{-1} \left(G\phi(m+k,n+k) - \frac1q\ G\phi(m+k-1,m+k-1)\right)
$$
when $k\geq 1$, we see that the limit $\lim_k \phi(m+k,n+k)$ exists and is equal to $\lim_k G\phi(m+k,n+k)$.
\end{proof}
%The converse is actually also true. That is, if $(b_n)$ is a convergent sequence in $\C$, $0 < \epsilon < 1$ and $a_n = (1-\epsilon)b_n + \epsilon a_{n-1}$ (and $a_0 = b_0$), then the sequence $(a_n)$ will converge (necessarily with the same limit).

\begin{lem}\label{lem:sa}
The transformations $F$ and $G$ both take hermitian functions to hermitian functions. So do their inverses $F^{-1}$ and $G^{-1}$.
\end{lem}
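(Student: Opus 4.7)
The plan is to observe that hermitian-ness is preserved by a simple set of building blocks, and then to note that $F$, $G$, and their inverses are assembled from these blocks using only real coefficients.

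First I would check the two elementary operators $\tau$ and $\tau^*$. If $\phi(l,k) = \overline{\phi(k,l)}$, then $\tau^*\phi(l,k) = \phi(l+1,k+1) = \overline{\phi(k+1,l+1)} = \overline{\tau^*\phi(k,l)}$, and the case of $\tau$ is essentially identical (treating the zero extension on the boundary separately, where the hermitian condition $0=\bar0$ is trivial). Hence the set $\mc V_h \subseteq \mc V$ of hermitian functions is a real-linear subspace stable under $\tau$, $\tau^*$, and therefore also under the composition $\tau\circ\tau^*$. Stability under pointwise limits is also clear, since complex conjugation is continuous.

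Having this, the lemma reduces to bookkeeping against the formulas already written in the text. For $F^{-1}$, the identity $F^{-1} = (1-1/q)^{-1}(\id - \tau/q)$ displays $F^{-1}$ as a real-linear combination of $\id$ and $\tau$, both of which preserve $\mc V_h$. The formula
\[
F = \Bigl(1 - \tfrac1q\Bigr)\sum_{n=0}^\infty \frac{\tau^n}{q^n}
\]
expresses $F\phi$ pointwise as a convergent series of hermitian functions with real coefficients, so $F\phi$ is hermitian. For $G^{-1}$, the displayed identity \eqref{eq:G} gives
\[
G^{-1} = \id + \tfrac{1}{q-1}(\tau\circ\tau^* - \tau),
\]
again a real-linear combination of operators that preserve $\mc V_h$. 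For $G$ itself one can either invoke $G = (\id - \tau/q)^{-1}\circ(\id - \tau\circ\tau^*/q)$ and expand the first factor as the real power series $\sum_n \tau^n/q^n$, or (perhaps cleaner) argue by induction on $\min(m,n)$ directly from the recursive definition: for $\min(m,n)=0$ one has $G\phi(m,n) = \phi(m,n)$, which is hermitian by assumption, and for $m,n\geq 1$ the formula
\[
G\phi(m,n) = \Bigl(1-\tfrac1q\Bigr)\phi(m,n) + \tfrac1q G\phi(m-1,n-1)
\]
preserves the hermitian property by the induction hypothesis (the coefficients $1-1/q$ and $1/q$ being real).

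There is no real obstacle here; the whole content is that $F$, $G$, $F^{-1}$, $G^{-1}$ are real-coefficient combinations (possibly infinite but pointwise convergent) of $\id$, $\tau$, $\tau^*$, and $\tau\circ\tau^*$, each of which is clearly a real-linear endomorphism of $\mc V_h$.
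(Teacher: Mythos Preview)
Your proof is correct and follows essentially the same approach as the paper: both argue that $\tau$ (and implicitly $\tau^*$) preserves hermitian functions, and then deduce the result for $F$, $F^{-1}$, $G$, $G^{-1}$ from the formulas \eqref{eq:F}, \eqref{eq:Ginv}/\eqref{eq:G}, and the recursive definition of $G$. Your write-up is simply more explicit about the real-linear structure and the building blocks, while the paper dispatches each case in a single sentence.
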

\begin{proof}
For $G$ this easily follows from inspecting the definition. For $G^{-1}$ simply look at \eqref{eq:Ginv}. For $F$ it suffices to note that $\tau^n$ preserves hermitian functions for each $n$, and hence does the sum in \eqref{eq:F}. For $F^{-1}$ it suffices to note that $\tau$ preserves hermitian functions.
\end{proof}

\begin{lem}\label{lem:FG}
There is the following relationship between $F$ and $G$.
$$
(1 - \tau^*)\circ G = F\circ(1-\tau^*).
$$

%$$
%\xymatrix{
% \ar[r]^G \ar[d]_{1 - \tau^*} & \ar[d]^{1 - \tau^*} \\
% \ar[r]_F &
%}
%$$
In other words, $G\phi - G\phi\circ\sigma = F(\phi - \phi\circ\sigma)$ for every $\phi\in\mc V$.
\end{lem}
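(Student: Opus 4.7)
The plan is to verify the identity by passing to inverses. Both $F$ and $G$ are invertible linear operators on $\mc V$ (invertibility of $G$ is recorded in equation \eqref{eq:Ginv}, while invertibility of $F$ is immediate from its Neumann series expression \eqref{eq:F}), so the desired identity $(1-\tau^*)\circ G = F\circ(1-\tau^*)$ is equivalent to
$$
F^{-1}\circ(1-\tau^*) \;=\; (1-\tau^*)\circ G^{-1}.
$$
The advantage of working with the inverses is that both $F^{-1}$ and $G^{-1}$ have elementary polynomial expressions in $\tau$ and $\tau^*$: from \eqref{eq:F} one reads off $F^{-1} = (q\,\id - \tau)/(q-1)$, and from \eqref{eq:G} we already have $G^{-1} = \id + (q-1)^{-1}(\tau\circ\tau^* - \tau)$.

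With these explicit formulas in hand, I would expand both sides and simplify using the single relation $\tau^*\circ\tau = \id$. On the right, the product $(1-\tau^*)(\tau\tau^* - \tau)$ collapses to $\tau\tau^* - \tau - \tau^* + 1$ once the cancellation $\tau^*\tau = \id$ is applied; combining with the leading $(1-\tau^*)$ term and clearing the common factor $1/(q-1)$ yields $q - \tau - q\tau^* + \tau\tau^*$. On the left, $F^{-1}(1-\tau^*)$ expands directly to the same expression $(q - \tau - q\tau^* + \tau\tau^*)/(q-1)$. Thus both sides agree, and the identity follows after multiplying back by $F$ on the left and $G$ on the right.

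I do not anticipate any real obstacle. The one point that must be handled carefully is to be explicit that all compositions are taking place on $\mc V$ and that the asymmetry between $\tau^*\tau = \id$ and $\tau\tau^* \neq \id$ is respected throughout the manipulation — in particular, one should not carelessly commute $\tau$ past $\tau^*$. If one prefers to avoid the preliminary check of invertibility of $F$ and $G$, an equivalent alternative is to verify the pointwise recursion directly: setting $g = \phi - \phi\circ\sigma$ and $h = G\phi - G\phi\circ\sigma$, the recursive definition of $G$ gives $h(k,l) = (1-1/q)g(k,l)$ for $\min(k,l)=0$ and $h(k,l) = (1-1/q)g(k,l) + (1/q)h(k-1,l-1)$ for $k,l \geq 1$; the Neumann series form of $F$ shows $Fg$ obeys the same recursion, so induction on $\min(k,l)$ gives $h = Fg$.
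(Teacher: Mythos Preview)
Your proposal is correct and follows exactly the route the paper takes: reduce to the equivalent identity $F^{-1}\circ(1-\tau^*) = (1-\tau^*)\circ G^{-1}$ and then verify it using the explicit formulas \eqref{eq:F} and \eqref{eq:G}. Your write-up simply fills in the ``this is easy'' that the paper leaves to the reader, and your alternative recursion argument is a valid second proof.
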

\begin{proof}
It is equivalent to show $F^{-1} \circ(1 - \tau^*) = (1-\tau^*)\circ G^{-1}$, and this is easy using \eqref{eq:F} and \eqref{eq:G}.
\end{proof}

\begin{prop}\label{prop:translate}
For any $\phi:\N_0^2\to\C$ we have $\chi_\phi = \omega_{G\phi}$, where $\omega_{G\phi}$ is as in \eqref{eq:omega}.
\end{prop}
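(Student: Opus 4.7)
The plan is to prove equality of the two linear functionals on the $^*$-algebra $D$ by checking they agree on a spanning set. Since the paper has observed that $D = \mathrm{span}\{S_{m,n} \mid m,n \in \N_0\}$, it suffices to verify $\omega_{G\phi}(S_{m,n}) = \phi(m,n)$ for all $m,n \in \N_0$, as the left-hand side for $\chi_\phi$ is $\phi(m,n)$ by definition.

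I will split into two cases according to whether $\min\{m,n\} = 0$. When $\min\{m,n\} = 0$, the formula \eqref{eq:Smn} gives $S_{m,n} = S^m(S^*)^n$, so directly
\[
\omega_{G\phi}(S_{m,n}) = \omega_{G\phi}(S^m(S^*)^n) = G\phi(m,n) = \phi(m,n),
\]
the last equality being the base case in the definition of $G$. When $m,n \geq 1$, I will use the relation $S^*S = I$, which gives
\[
S^*S^m(S^*)^n S = S^{m-1}(S^*)^{n-1},
\]
so by linearity of $\omega_{G\phi}$ and the defining formula of $S_{m,n}$,
\[
\omega_{G\phi}(S_{m,n}) = \left(1 - \tfrac{1}{q}\right)^{-1}\!\left(G\phi(m,n) - \tfrac{1}{q}\, G\phi(m-1,n-1)\right).
\]
This expression equals $\phi(m,n)$ exactly because of the recursive clause defining $G$, namely $G\phi(m,n) = (1 - 1/q)\phi(m,n) + (1/q)\,G\phi(m-1,n-1)$ rearranged.

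There is essentially no obstacle here; the argument is a direct verification. The only minor point to be careful about is that the two functionals are a priori defined only on $D$ (not all of $C^*(S)$), so the claimed equality is meant at the level of $D$; it then follows automatically that one extends to a bounded functional on $C^*(S)$ if and only if the other does, with equal norm. This is what we will need in the subsequent sections to translate between the norm of $\chi_\phi$ (the natural object on the finite free group side through $F$ and $G$) and the norm of $\omega_{G\phi}$ on the Toeplitz algebra, where the results of Section~\ref{sec:infinite} apply.
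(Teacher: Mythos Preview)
Your proof is correct and is essentially the same verification as the paper's, just carried out on the dual spanning set: the paper evaluates $\chi_\phi$ on the elements $S^m(S^*)^n$ (using the recursion \eqref{eq:recursion} and an induction on $\min\{m,n\}$), while you evaluate $\omega_{G\phi}$ on the elements $S_{m,n}$ (using \eqref{eq:Smn} directly, which avoids the induction). Either way the identity reduces immediately to the recursive definition of $G$.
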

\begin{proof}
It is enough to show that $\chi_\phi$ attains the value $G\phi(m,n)$ at $S^m(S^*)^n$ for every $m,n\in\N_0$. Observe that if $\min\{m,n\} = 0$ we obviously have
$$
\chi_\phi(S^m(S^*)^n) = \chi_\phi(S_{m,n}) = \phi(m,n) = G\phi(m,n).
$$
Inductively, for $m,n\geq 1$ we get using \eqref{eq:recursion} that
\begin{align*}
\chi_\phi(S^m(S^*)^n)
   &= \left(1 - \frac1q\right) \chi_\phi(S_{m,n}) + \frac1q \chi_\phi(S^{m-1}(S^*)^{n-1})
\\ &= \left(1 - \frac1q\right) \phi(m,n) + \frac1q G\phi(m-1,n-1)
\\ &= G\phi(m,n).
\end{align*}
\end{proof}

The following proposition is analogous to Proposition~\ref{prop:HSS28}, and the proof is to deduce it from Proposition~\ref{prop:HSS28} by using transformations $F$ and $G$.
\begin{prop}\label{prop:q-HSS28}
Let $\phi:\N_0\times\N_0\to\C$ be a function, and let $h = \phi - \phi\circ\sigma$. The functional $\chi_\phi$ extends to a bounded functional on $C^*(S)$ if and only if $h$ is of trace class, and the function $\phi_0:\Z\to\C$ given by $\phi_0(m-n) = \lim_k \phi(m+k,n+k)$ (which is then well-defined) lies in $B(\Z)$. If this is the case, then
$$
||\chi_\phi|| = ||Fh||_1 + ||\phi_0||_{B(\Z)},
$$
where $F$ is the operator defined in \eqref{eq:F}.
\end{prop}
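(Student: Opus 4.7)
The strategy is to reduce the entire statement to Proposition~\ref{prop:HSS28} by passing from $\phi$ to $G\phi$. By Proposition~\ref{prop:translate} we have $\chi_\phi=\omega_{G\phi}$, so $\chi_\phi$ extends to $C^*(S)$ if and only if $\omega_{G\phi}$ does. Applying Proposition~\ref{prop:HSS28} to $G\phi$, the latter happens if and only if the kernel $G\phi-G\phi\circ\sigma$ is of trace class and the function $(G\phi)_0$ (whose existence is implicit) lies in $B(\Z)$, with
$$
\|\chi_\phi\|=\|\omega_{G\phi}\|=\|G\phi-G\phi\circ\sigma\|_1+\|(G\phi)_0\|_{B(\Z)}.
$$
Now Lemma~\ref{lem:FG} translates the first term into our desired form: $G\phi-G\phi\circ\sigma=(1-\tau^*)G\phi=F(1-\tau^*)\phi=Fh$.

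For the second term I need to identify $(G\phi)_0$ with $\phi_0$, i.e., to show that the relevant diagonal limits exist simultaneously and agree. One direction is precisely Lemma~\ref{lem:Glim}: existence of $\lim_k G\phi(m+k,n+k)$ implies existence of $\lim_k\phi(m+k,n+k)$ with the same value. The converse is equally elementary via the defining recursion
$$
G\phi(m+k,n+k)=\Bigl(1-\tfrac1q\Bigr)\phi(m+k,n+k)+\tfrac1q\,G\phi(m+k-1,n+k-1),
$$
which is a contraction in $k$: setting $a_k=G\phi(m+k,n+k)-L$ with $L=\lim_k\phi(m+k,n+k)$, one gets $|a_k|\le(1-1/q)|\phi(m+k,n+k)-L|+(1/q)|a_{k-1}|$, and iterating gives $\limsup|a_k|=0$. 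Thus $(G\phi)_0=\phi_0$, so $(G\phi)_0\in B(\Z)$ if and only if $\phi_0\in B(\Z)$, and their $B(\Z)$-norms coincide.

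It remains to verify that $h\in B_1(\ell^2(\N_0))$ if and only if $Fh\in B_1(\ell^2(\N_0))$. Since $\tau$ is an isometry of $B_1(\ell^2(\N_0))$ (as noted in Section~\ref{sec:FG}) and $1/q<1$, the series $(1-1/q)\sum_{n\ge0}\tau^n/q^n$ converges in operator norm on $B_1$, so $F$ is a bounded operator on $B_1$ with bounded inverse $F^{-1}=(1-1/q)^{-1}(\mathrm{id}-\tau/q)$; this gives the equivalence. Combining the three ingredients produces the equivalence stated in the proposition and the norm identity $\|\chi_\phi\|=\|Fh\|_1+\|\phi_0\|_{B(\Z)}$. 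The only nontrivial point is the bookkeeping connecting $G\phi$ to $\phi$ via Lemmas~\ref{lem:FG} and~\ref{lem:Glim}; everything else is a direct transport through the isomorphism of Proposition~\ref{prop:translate}.
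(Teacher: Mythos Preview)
Your proof is correct and follows essentially the same route as the paper: reduce to Proposition~\ref{prop:HSS28} via $\chi_\phi=\omega_{G\phi}$ (Proposition~\ref{prop:translate}), use Lemma~\ref{lem:FG} to rewrite $G\phi-G\phi\circ\sigma$ as $Fh$, and invoke Lemma~\ref{lem:Glim} to identify $(G\phi)_0$ with $\phi_0$. Your extra contraction argument for the converse of Lemma~\ref{lem:Glim} is correct but not actually needed: once $h$ is trace class, so is $Fh=G\phi-G\phi\circ\sigma$, which already forces the diagonal limits of $G\phi$ to exist, and then the stated direction of Lemma~\ref{lem:Glim} suffices to get $\phi_0=(G\phi)_0$ in both directions of the equivalence.
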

\begin{proof}
From Proposition~\ref{prop:translate} we know that $\chi_\phi = \omega_{G\phi}$. From the characterization in Proposition~\ref{prop:HSS28} we deduce that $\chi_\phi$ extends if and only if $G\phi - G\phi\circ\sigma$ is of trace class and the function $\phi'_0$ given by
$$
\phi'_0(m-n)= \lim_{k\to\infty} G\phi(m+k,n+k) \quad (m,n\in\N_0)
$$
lies in $B(\Z)$. Recall (Lemma~\ref{lem:FG}) that $G\phi - G\phi\circ\sigma = F(\phi - \phi\circ\sigma) = Fh$, and $h$ is of trace class if and only if $F(h)$ is of trace class. Also from Lemma~\ref{lem:Glim} we see that $\phi'_0 = \phi_0$.

It remains to show the norm equality. We have from Proposition~\ref{prop:HSS28}
$$
\|\chi_\phi\| = \|\omega_{G\phi}\| = \| Fh \|_1 + \|\phi'_0\|_{B(\Z)},
$$
and this completes to proof.
\end{proof}

\subsection{Relation between multipliers and functionals}
Similarly to how we defined $\chi_\phi$ as the analogue of $\omega_\phi$ we will now define $N_\phi$ as the analogue of $M_\phi$. More precisely, let $N_\phi$ be the linear map defined on $D$ by
$$
N_\phi(S_{m,n}) = \phi(m,n)S_{m,n}.
$$
It may or may not happen that $N_\phi$ extends to $C^*(S)$, and if it does we will also denote the extension by $N_\phi$. It turns out that this happens exactly when $\chi_\phi$ extends (see Proposition~\ref{prop:eq-norm}), but the proof is not as easy as the case with $M_\phi$ and $\omega_\phi$. The reason is that there is no $^*$-homomorphism $\alpha : C^*(S) \to C^*(S\tensor S)$ that maps $S_{m,n}$ to $S_{m,n} \tensor S_{m,n}$. So we cannot directly follow the approach of Remark~\ref{rem:ucp}.

Observe that $\chi_\phi =\ev_1\circ \pi \circ N_\phi$, where $\ev_1$ and $\pi$ are as in Remark~\ref{rem:ucp}. Hence $||\chi_\phi || \leq || N_\phi||$. We will now prove the reverse inequality (Proposition~\ref{prop:eq-norm}). The proof is partly contained in the proof of Theorem 2.3 in \cite{HSS}, so we will refer to that proof and emphasize the differences.

The overall strategy of our proof is the following. We find an isometry $U$ on a Hilbert space $\ell^2(X)$ and a function $\tilde\phi:X\times X\to\C$ such that $\tilde\phi$ is a Schur multiplier. We construct them in such a way that we may find a $^*$-isomorphism $\beta$ between $C^*(S)$ and $C^*(U)$ such that $N_\phi = \beta^{-1} \circ m_{\tilde\phi} \circ \beta$, where $m_{\tilde\phi}$ is the multiplier corresponding to $\tilde\phi$. The construction is similar to the one in \cite{HSS}.

Let $X$ be a homogeneous tree of degree $q+1$, i.e., each vertex has degree $q+1$. We will identify the vertex set with $X$. We fix an infinite, non-returning path $\omega = (x_0,x_1,x_2,\ldots)$ in $X$, i.e., $x_i \neq x_j$ when $i\neq j$. Define the map $c:X\to X$ such that for any $x\in X$ the sequence $x,c(x),c^2(x),\ldots$ is the unique infinite, non-returning path eventually following $\omega$. This path is denoted $[x,\omega[$. Visually, $c$ is the ``contraction'' of the tree towards the boundary point $\omega$.

\begin{defi}\label{defi:mn}
Observe that for each pair of vertices $x,y \in X$ there are smallest numbers $m,n\in\N_0$ such that $c^m(x) \in [y,\omega[$ and $c^n(y)\in [x,\omega[$. When we need to keep track of more than two points at a time, we denote $m$ and $n$ by $m(x,y)$ and $n(x,y)$ respectively.

Given a function $\phi:\N_0^2\to\C$ we define the function $\tilde\phi:X\times X\to \C$ by
$$
\tilde\phi(x,y) = \phi(m,n).
$$
\end{defi}

\begin{lem}\label{lem:additive}
Using the notation of Definition~\ref{defi:mn} we have
$$
m(x,y) - n(x,y) = m(x,z) - n(x,z) + m(z,y) - n(z,y)
$$
for every $x,y,z\in X$.
\end{lem}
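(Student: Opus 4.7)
The plan is to recognize $h(u,v) := m(u,v) - n(u,v)$ as (essentially) a Busemann function on the tree with respect to the boundary point $\omega$, and exploit the standard additivity of such a function.

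First I would unpack the definitions geometrically. For any two vertices $u, v \in X$, the infinite non-returning paths $[u,\omega[$ and $[v,\omega[$ eventually merge (since both end up following $\omega$). Let $z(u,v)$ denote the first vertex at which they meet; by minimality in the definition we have $c^{m(u,v)}(u) = c^{n(u,v)}(v) = z(u,v)$, and $m(u,v), n(u,v)$ are the distances in $X$ from $u$ and from $v$ to this confluent point.

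Next, the key observation: for any point $p$ on $\omega$ lying \emph{past} $z(u,v)$ (meaning $p \in [z(u,v),\omega[$), the geodesic from $u$ to $p$ in the tree passes through $z(u,v)$, so by additivity of the tree metric
\[
d(u, p) = d(u, z(u,v)) + d(z(u,v), p) = m(u,v) + d(z(u,v), p),
\]
and analogously $d(v,p) = n(u,v) + d(z(u,v), p)$. Subtracting,
\[
d(u,p) - d(v,p) = m(u,v) - n(u,v).
\]
Given three vertices $x, y, z$, I then choose $N$ large enough that $p := x_N$ lies past each of the three confluent points $z(x,y), z(x,z), z(z,y)$ on $\omega$; such $N$ exists because each confluent point sits at a finite position on $\omega$. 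Applying the identity above three times yields
\[
h(x,y) = d(x,x_N) - d(y,x_N),\quad h(x,z) = d(x,x_N) - d(z,x_N),\quad h(z,y) = d(z,x_N) - d(y,x_N),
\]
and the additivity $h(x,y) = h(x,z) + h(z,y)$ is then immediate by cancellation.

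No serious obstacle is expected; the only mildly delicate point is verifying the ``large $N$'' step, i.e., that every confluent point $z(u,v)$ really does lie on $\omega$ from some index onward. This follows from the definition of $c$: for any vertex $w$, the tail of the sequence $w, c(w), c^2(w), \ldots$ eventually coincides with a tail of $\omega$, so $z(u,v)$, being equal to $c^{m(u,v)}(u)$, is some $x_k$ for a finite $k$, and one simply takes $N$ to exceed the maximum of the three such indices.
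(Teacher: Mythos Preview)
Your approach is essentially identical to the paper's: pick a vertex far out along $\omega$, express each $m-n$ as a difference of distances to that vertex, and cancel. The paper does exactly this (calling the far-out vertex $v$ rather than $x_N$).

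One small inaccuracy in your final paragraph: the confluent point $z(u,v)=c^{m(u,v)}(u)$ need \emph{not} lie on $\omega$. For instance, if $u$ and $v$ are two distinct neighbours of a vertex $w\notin\omega$ with $c(u)=c(v)=w$, then $z(u,v)=w$. What is true (and all you need) is that the ray $[z(u,v),\omega[$ eventually merges with $\omega$, so for $N$ large enough the vertex $x_N$ lies on $[z(u,v),\omega[$; this is precisely the hypothesis under which your key identity $d(u,x_N)-d(v,x_N)=m(u,v)-n(u,v)$ holds. With that correction your argument goes through unchanged.
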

\begin{proof}
Let $v\in X$ be a point sufficiently far out in $\omega$ such that $v$ lies beyond the following three points:
$$
c^{m(x,y)}(x) = c^{n(x,y)}(y),\ c^{m(x,z)}(x) = c^{n(x,z)}(z),\ c^{m(z,y)}(z) = c^{n(z,y)}(y).
$$
If we let $d$ denote the graph distance, then
\begin{align}
\begin{aligned}\label{eq:d1}
m(x,y) - n(x,y) &= m(x,y) + d(c^{m(x,y)}(x),v) - d(c^{n(x,y)}(y),v) - n(x,y) \\ &= d(x,v) - d(y,v),
\end{aligned}
\end{align}
and similarly
\begin{align}\label{eq:d2}
m(x,z) - n(x,z) = d(x,v) - d(z,v), \quad m(z,y) - n(z,y) = d(z,v) - d(y,v).
\end{align}
The lemma now follows by combining \eqref{eq:d1} and \eqref{eq:d2}.
\end{proof}

\begin{lem}\label{lem:phi0}
Let $\phi_0\in B(\Z)$ be given, and let $\phi(m,n) = \phi_0(m-n)$. The function $\tilde\phi$ from Definition~\ref{defi:mn} is a Schur multiplier, and
$$
\|\tilde\phi\|_S \leq \|\phi_0\|_{B(\Z)}.
$$
\end{lem}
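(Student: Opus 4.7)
The plan is to appeal to Proposition~\ref{prop:Gilbert} by exhibiting a factorisation of $\tilde\phi$ as an inner product in $L^2(\mb T, |\mu|)$, where $\mu$ is the complex Radon measure on the circle associated with $\phi_0$ under the identification $B(\Z)\simeq M(\mb T)$ given in \eqref{eq:BM}.

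The first step is to extract from Lemma~\ref{lem:additive} a Busemann-type cocycle. Fix any basepoint $x_0\in X$ and define $h:X\to\Z$ by $h(x) = m(x,x_0) - n(x,x_0)$. The symmetry $m(x,y) = n(y,x)$ immediately forces $m(x_0,y)-n(x_0,y) = -h(y)$. Specialising Lemma~\ref{lem:additive} with $z=x_0$ then yields
\[
m(x,y) - n(x,y) = h(x) - h(y) \qquad(x,y\in X),
\]
and therefore $\tilde\phi(x,y) = \phi_0(h(x)-h(y))$.

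The second step is to represent $\phi_0$ as a Fourier transform. By the isometric identification $B(\Z)\simeq M(\mb T)$ there is a complex Radon measure $\mu\in M(\mb T)$ with $\phi_0(n) = \int_{\mb T}z^n\,d\mu(z)$ and $\|\mu\| = \|\phi_0\|_{B(\Z)}$. Writing the polar decomposition $d\mu = f\,d|\mu|$ with $|f|=1$ $|\mu|$-a.e., we obtain
\[
\tilde\phi(x,y) \;=\; \int_{\mb T} z^{h(x)}\,\overline{z^{h(y)}}\,f(z)\,d|\mu|(z).
\]

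The third step is to package this as a Gilbert-type factorisation. Set $H = L^2(\mb T,|\mu|)$ and define $a,b:X\to H$ by $a(x)(z) = f(z)z^{h(x)}$ and $b(y)(z) = z^{h(y)}$. Then $\tilde\phi(x,y) = \langle a(x),b(y)\rangle_H$, and because $|z^{h(x)}|=|f(z)|=1$ on the circle,
\[
\|a(x)\|^2 = \|b(y)\|^2 = |\mu|(\mb T) = \|\mu\|,
\]
so $\|a(x)\|\,\|b(y)\| = \|\mu\| = \|\phi_0\|_{B(\Z)}$ uniformly in $x,y$. Proposition~\ref{prop:Gilbert} then delivers the desired bound $\|\tilde\phi\|_S \leq \|\phi_0\|_{B(\Z)}$.

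There is no real obstacle here: the only non-routine observation is the cocycle identity $m(x,y)-n(x,y) = h(x)-h(y)$, which is really just Lemma~\ref{lem:additive} rephrased, and once this is in hand the factorisation through $L^2(\mb T,|\mu|)$ is standard.
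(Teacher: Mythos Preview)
Your proof is correct. The cocycle identity $m(x,y)-n(x,y)=h(x)-h(y)$ is a clean reformulation of Lemma~\ref{lem:additive} (using the evident symmetry $m(y,x)=n(x,y)$), and once that is in place the factorisation through $L^2(\mb T,|\mu|)$ via the polar decomposition $d\mu=f\,d|\mu|$ verifies the hypotheses of Proposition~\ref{prop:Gilbert} directly.

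The paper takes a different route. It first treats the case $\mu=\delta_s$ (so $\phi_0(n)=s^n$ is a character), where Lemma~\ref{lem:additive} makes $\tilde\phi$ multiplicative and hence a positive definite kernel of norm~$1$; it then extends to finite convex combinations $\mu\in\conv\{c\delta_s:c,s\in\mb T\}$; and finally it invokes Hahn--Banach to approximate a general $\mu\in M(\mb T)_1$ vaguely by such combinations, using that the unit ball of Schur multipliers is closed under pointwise limits. Your argument bypasses this three-step approximation entirely by writing down the Gilbert factorisation in one stroke, which is shorter and more transparent. The paper's approach, on the other hand, makes explicit that the extreme-point case yields a \emph{positive definite} kernel, a structural observation your proof does not isolate (though it is not needed for the norm bound).
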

\begin{proof}
It is enough to prove the lemma when $\|\phi_0\|_{B(\Z)} \leq 1$. Write $\phi_0$ in the form
$$
\phi_0(n) = \int_{\mb T} z^n d\mu(z) \quad n\in\Z,
$$
for some complex Radon measure $\mu$ on $\mb T$. First assume that $\mu = \delta_s$ for some $s\in\mb T$, so that $\phi_0$ is of the form $\phi_0(n) = s^n$. Then $\phi_0$ is a group homomorphism, so from Lemma~\ref{lem:additive} we get
$$
\tilde\phi(x,y) = \tilde\phi(x,z)\tilde\phi(z,y) \quad x,y,z\in X.
$$
In particular, if we fix some vertex, say $x_0$ from before, we have
$$
\tilde\phi(x,y) = \tilde\phi(x,x_0)\bar{\tilde\phi(y,x_0)},
$$
so $\tilde\phi$ is a positive definite kernel on $X$. Since also $\tilde\phi(x,x) = 1$ for every $x\in X$, it follows that $\tilde\phi$ is a Schur multiplier with norm at most 1 (see \cite[Theorem D.3]{BO}).

It follows that if $\mu$ lies in the set $C = \conv\{ c\delta_s \mid c,s\in\mb T\}$, then $\tilde\phi$ is a Schur multiplier of norm at most 1.

Now, let $\mu$ in $M(\mb T)_1$ be arbitrary. It follows from the Hahn-Banach Theorem that the vague closure of the set $C$ is $M(\mb T)_1$, so there is a net $(\mu_\alpha)_{\alpha\in A}$ in $C$ such that $\mu_\alpha\to\mu$ vaguely, that is,
$$
\int_{\mb T} f d\mu_\alpha \to \int_{\mb T} f d\mu \quad \text{as }\alpha\to\infty
$$
for each $f\in C(\mb T)$. In particular,
$$
\int_{\mb T} z^n d\mu_\alpha(z) \to \phi_0(n) \quad \text{as }\alpha\to\infty,
$$
so $\tilde\phi$ is the pointwise limit of Schur multipliers with norm at most 1. The proof is now complete, since the Schur multipliers of norm at most 1 are closed under pointwise convergence.

\end{proof}

Let $U$ be the operator on $\ell^2(X)$ defined by
$$
U\delta_x = \frac{1}{\sqrt q} \sum_{c(z)=x} \delta_z,
$$
where $(\delta_x)_{x\in X}$ are the standard basis vectors in $\ell^2(X)$, and let $U_{m,n}$ be defined similarly to how we defined $S_{m,n}$:
$$
U_{m,n} = \left( 1 - \frac1q \right)^{-1} \left( U^m(U^*)^n - \frac1q U^*U^m(U^*)^nU \right).
$$
It is shown in \cite{HSS} that $U$ is a proper isometry. Also if $\tilde\phi$ is a Schur multiplier, then $C^*(U)$ is invariant under $m_{\tilde\phi}$, and
$$
m_{\tilde\phi}(U_{m,n}) = \phi(m,n)U_{m,n}.
$$
Actually the authors only state the mentioned result under the assumption that $\phi(m,n)$ depends only on $m+n$, but the proof without this assumption is exactly the same (see Lemma 2.6 and Corollary 2.7 in \cite{HSS}).

Since $U$ is a proper isometry, there is a $^*$-isomorphism $\beta: C^*(S) \to C^*(U)$ such that $\beta(S) = U$. It follows that $\beta(S_{m,n}) = U_{m,n}$ for all $m,n\in\N_0$.

\begin{prop}\label{prop:eq-norm}
Let $\phi:\N_0^2\to\C$ be a function. Then $\chi_\phi$ extends to a bounded functional on $C^*(S)$ if and only if $N_\phi$ extends to a (completely) bounded map on $C^*(S)$, and in this case
$$||N_\phi|| = ||\chi_\phi||.$$
\end{prop}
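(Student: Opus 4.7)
The easy half is immediate: the identity $\chi_\phi = \ev_1 \circ \pi \circ N_\phi$ noted in the paragraph preceding the statement shows that whenever $N_\phi$ extends to a bounded map on $C^*(S)$, one has $\|\chi_\phi\| \leq \|N_\phi\|$.

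For the reverse direction the plan is the one sketched just before the statement. Assuming $\chi_\phi$ extends, I would prove that the kernel $\tilde\phi$ on $X\times X$ from Definition~\ref{defi:mn} is a Schur multiplier with $\|\tilde\phi\|_S \leq \|\chi_\phi\|$. Granting this, Coburn's theorem applied to the proper isometry $U$ on $\ell^2(X)$ gives a $^*$-isomorphism $\beta:C^*(S)\to C^*(U)$ with $\beta(S)=U$, and hence $\beta(S_{m,n}) = U_{m,n}$ for every $m,n\in\N_0$. Combined with the already-recorded facts that $m_{\tilde\phi}$ leaves $C^*(U)$ invariant and satisfies $m_{\tilde\phi}(U_{m,n}) = \phi(m,n)U_{m,n}$, this produces the identity $N_\phi = \beta^{-1}\circ m_{\tilde\phi}|_{C^*(U)}\circ \beta$ on the dense $^*$-subalgebra $D$. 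The right-hand side is completely bounded, so it furnishes the desired extension of $N_\phi$, and since $\beta$ is a $^*$-isomorphism (hence completely isometric), we obtain $\|N_\phi\|_{\cb} \leq \|m_{\tilde\phi}\|_{\cb} = \|\tilde\phi\|_S$.

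To verify the Schur-multiplier bound $\|\tilde\phi\|_S \leq \|\chi_\phi\|$, I would follow the scheme of the proof of \cite[Theorem~2.3]{HSS} and split $\chi_\phi$ according to the extension \eqref{eq:extension}. Proposition~\ref{prop:q-HSS28} together with the isometric direct-sum decomposition $C^*(S)^* \cong \mc K^* \oplus_1 C(\mb T)^*$ gives $\chi_\phi = \omega_1 + \omega_0\circ\pi$ with $\omega_1(x) = \Tr((Fh)^t x)$ (where $h = \phi - \phi\circ\sigma$), $\omega_0\in C(\mb T)^*$ corresponding to the diagonal limit $\phi_0\in B(\Z)$, and $\|\chi_\phi\| = \|Fh\|_1 + \|\phi_0\|_{B(\Z)}$. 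This induces a splitting $\phi = \phi_1 + \phi_2$, where the recursion defining $G^{-1}$ immediately yields $\phi_2(m,n) = \phi_0(m-n)$, so that Lemma~\ref{lem:phi0} provides $\|\tilde\phi_2\|_S \leq \|\phi_0\|_{B(\Z)}$.

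The main obstacle is the trace-class piece $\tilde\phi_1$, for which I would need $\|\tilde\phi_1\|_S \leq \|Fh\|_1$. Taking a Schmidt-type decomposition $(Fh)^t = \sum_k \lambda_k \,|e_k\rangle\langle f_k|$ with $\sum_k |\lambda_k| = \|Fh\|_1$, one has $\phi_1(m,n) = \omega_1(S_{m,n}) = \sum_k \lambda_k\langle S_{m,n} e_k, f_k\rangle$. The crucial step, carried out in HSS, is to use the explicit action \eqref{eq:Smn} of $S_{m,n}$ on basis vectors together with the tree-theoretic interpretation $(x,y)\mapsto (m(x,y),n(x,y))$ to rewrite $\tilde\phi_1(x,y) = \langle a(x), b(y)\rangle$ for bounded maps $a,b:X\to H$ with $\|a\|_\infty \|b\|_\infty \leq \|Fh\|_1$; Proposition~\ref{prop:Gilbert} then delivers $\|\tilde\phi_1\|_S \leq \|Fh\|_1$. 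Adding the two bounds gives $\|\tilde\phi\|_S \leq \|Fh\|_1 + \|\phi_0\|_{B(\Z)} = \|\chi_\phi\|$, completing the argument.
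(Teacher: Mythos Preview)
Your proposal is correct and follows essentially the same route as the paper: both directions are handled identically, the splitting $\phi = \phi_1 + \phi_2$ via Proposition~\ref{prop:q-HSS28} matches the paper's $\psi + \phi_0$, Lemma~\ref{lem:phi0} handles the $B(\Z)$ piece, and the trace-class piece is reduced to the construction in the proof of \cite[Theorem~2.3]{HSS} giving the Gilbert-type factorisation with bound $\|Fh\|_1$. One minor point worth recording explicitly is that the HSS maps $P_k,Q_k$ produce $\Tr(S_{n,m}(Fh))$ with the indices transposed, so one first bounds $(x,y)\mapsto\tilde\phi_1(y,x)$ and then uses that the Schur norm is invariant under transposition.
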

\begin{proof}
As mentioned earlier it suffices to prove that if $\chi_\phi$ extends to a bounded functional on $C^*(S)$, then $N_\phi$ extends to a bounded map on $C^*(S)$ as well, and $||N_\phi|| \leq ||\chi_\phi||$, since the other direction has already been taken care of.

Suppose $\chi_\phi$ extends to $C^*(S)$. From Proposition~\ref{prop:q-HSS28} we know that $h = \phi - \phi\circ\sigma$ is of trace class, and the function $\phi_0:\Z\to\C$ given by $\phi_0(m-n) = \lim_k \phi(m+k,n+k)$ is well-defined and lies in $B(\Z)$. Also
$$
||\chi_\phi|| = ||Fh||_1 + ||\phi_0||_{B(\Z)}.
$$
Let $ \psi(m,n) = \phi(m,n) - \phi_0(m-n)$, and notice that
\begin{align}\label{eq:psi1}
\psi(m,n) = \sum_{k=0}^\infty h(m+k,n+k) = \Tr(S_{m,n} (Fh) )
\end{align}
by \cite[Lemma 2.2]{HSS}. In the proof of \cite[Theorem 2.3]{HSS} it is shown that there are maps $P_k,Q_k:X\to\ell^2(X)$ such that if $m,n$ are chosen as in Definition~\ref{defi:mn}, then
\begin{align}\label{eq:psi2}
\sum_{k=0}^\infty \la P_k(x),Q_k(y)\ra = \Tr(S_{n,m} (Fh)) \quad (x,y\in X),
\end{align}
and
\begin{align}\label{eq:psi3}
\sum_{k=0}^\infty ||P_k||_\infty ||Q_k||_\infty = ||Fh||_1.
\end{align}
We set
$$
\tilde\phi(x,y) = \phi(m,n) \quad\text{and}\quad \tilde\psi(x,y) = \psi(m,n)
$$
as in Definition~\ref{defi:mn}. Combining \eqref{eq:psi1}, \eqref{eq:psi2} and \eqref{eq:psi3} we see that the function $(x,y)\mapsto \tilde\psi(y,x) = \psi(n,m)$ is a Schur multiplier on $X$ with norm at most $||Fh||_1$. Hence $\tilde\psi$ is also a Schur multiplier with norm at most $||Fh||_1$.

We have
$$
\tilde\phi(x,y) = \tilde\psi(x,y) + \phi_0(m-n),
$$
so by using Lemma~\ref{lem:phi0} we see that $\tilde\phi$ is a Schur multiplier with
$$
||\tilde\phi||_S \leq ||Fh||_1 + ||\phi_0||_{B(\Z)}.
$$
By definition, $||m_{\tilde\phi}|| = ||\tilde\phi||_S$, and since $N_\phi = \beta^{-1} \circ m_{\tilde\phi} \circ\beta$, we conclude that $N_\phi$ is completely bounded with
$$
\| N_\phi \| \leq ||m_{\tilde\phi}|| \leq ||Fh||_1 + ||\phi_0||_{B(\Z)} = \|\chi_\phi\|.
$$

\end{proof}

\subsection{Positive and conditionally negative functions}
Our next goal is to prove analogues of Propositions~\ref{prop:thm1} and~\ref{prop:thm2}. The following proposition characterizes the functions $\phi$ that induce states on the Toeplitz algebra.

\begin{prop}\label{prop:q-thm1}
Let $\phi:\N_0\times\N_0\to\C$ be a function. Then the following are equivalent.
\begin{enumerate}
	\item The functional $\chi_\phi$ extends to a state on the Toeplitz algebra $C^*(S)$.
	\item The multiplier $N_\phi$ extends to a u.c.p. map on the Toeplitz algebra $C^*(S)$.
	\item The functions $F(\phi - \phi\circ\sigma)$ and $G\phi$ are positive definite, and $\phi(0,0) = 1$.
\end{enumerate}
\end{prop}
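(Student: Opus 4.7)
The strategy is to prove $(2) \Rightarrow (1) \Leftrightarrow (3)$ using the transfer results already in place, and then handle the remaining direction $(3) \Rightarrow (2)$ via the tree picture of Section~\ref{sec:FG}.

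For $(2) \Rightarrow (1)$, observe that $\chi_\phi = \ev_1 \circ \pi \circ N_\phi$ is a composition of unital completely positive maps ($\pi$ is a $*$-homomorphism, $\ev_1$ is a state), hence a state whenever $N_\phi$ is u.c.p. For $(1) \Leftrightarrow (3)$, I would use Proposition~\ref{prop:translate} to rewrite $\chi_\phi = \omega_{G\phi}$, so that (1) becomes the statement that $\omega_{G\phi}$ is a state on $C^*(S)$. By Proposition~\ref{prop:thm1} applied to $G\phi$, this is equivalent to $G\phi$ being positive definite with $G\phi(0,0)=1$ and $G\phi - G\phi\circ\sigma$ positive definite. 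Now $G\phi - G\phi\circ\sigma = F(\phi - \phi\circ\sigma)$ by Lemma~\ref{lem:FG}, and $G\phi(0,0) = \phi(0,0)$ is immediate from the definition of $G$, so these conditions reformulate exactly as (3).

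For $(3) \Rightarrow (2)$, the main content: the proof of Proposition~\ref{prop:eq-norm} shows that on the dense subalgebra $D$ we have $N_\phi = \beta^{-1} \circ m_{\tilde\phi} \circ \beta$, where $\beta : C^*(S) \to C^*(U)$ is the Coburn $*$-isomorphism with $\beta(S_{m,n}) = U_{m,n}$ and $\tilde\phi(x,y) = \phi(m(x,y), n(x,y))$ is the lifted kernel on the homogeneous tree $X$. It thus suffices to prove that $\tilde\phi$ is a positive definite kernel on $X$ with $\tilde\phi(x,x) = 1$; this forces $m_{\tilde\phi}$ to be a u.c.p.\ Schur multiplier on $B(\ell^2(X))$, and therefore $N_\phi$ to be u.c.p.\ on $C^*(S)$. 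The diagonal condition is immediate from $\phi(0,0) = 1$. For positive definiteness, I would decompose $G\phi$ via Proposition~\ref{prop:thm1}(3) as
$$G\phi(k,l) = \sum_{i=0}^\infty h(k+i,l+i) + \phi_0(k-l),$$
with $h = F(\phi - \phi\circ\sigma) \geq 0$ of trace class and $\phi_0 \in B(\Z)$ positive definite, and obtain a corresponding splitting $\tilde\phi = \tilde\psi + \tilde\phi_0$ on $X$, mirroring the decomposition carried out in the proof of Proposition~\ref{prop:eq-norm}. The piece $\tilde\phi_0$ is positive definite because $\phi_0$ is the Fourier transform of a positive measure on $\mb T$ and the rank-one kernels $(x,y) \mapsto s^{m(x,y)-n(x,y)}$ are positive definite on $X$ (this is the core computation in the proof of Lemma~\ref{lem:phi0}). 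For $\tilde\psi$ I would exploit a factorization $h = T^*T$ coming from the spectral decomposition of the positive trace-class operator $h$ to produce an explicit map $A : X \to H$ with $\tilde\psi(x,y) = \la A(x), A(y)\ra$.

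The main obstacle will be this last step: the asymmetric Schur factorization $\tilde\psi(x,y) = \sum_k \la P_k(x), Q_k(y)\ra$ used in Proposition~\ref{prop:eq-norm} witnesses only Schur boundedness of $\tilde\psi$, and one must leverage the positivity of $h$ to upgrade it to a symmetric factorization with $P_k = Q_k$ (up to a unitary), which is precisely what encodes positive definiteness on the tree.
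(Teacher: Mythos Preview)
Your proposal is correct and follows essentially the same route as the paper: the equivalences $(2)\Rightarrow(1)$ and $(1)\Leftrightarrow(3)$ are handled identically, and for the remaining direction the paper (like you) passes to the tree realization $N_\phi = \beta^{-1}\circ m_{\tilde\phi}\circ\beta$ and argues that $\tilde\phi$ is a positive definite kernel on $X$. The only cosmetic difference is that the paper phrases the hard direction as $(1)\Rightarrow(2)$ rather than $(3)\Rightarrow(2)$, and dispatches your ``main obstacle'' in a single line by observing that when $Fh$ is positive definite one may take $P_k = Q_k$ in the factorization borrowed from \cite[Theorem~2.3]{HSS}; your spectral-decomposition idea for $h$ amounts to the same thing.
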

\begin{proof}
The equivalence (1)$\iff$(3) follows from Proposition~\ref{prop:thm1} and Proposition~\ref{prop:translate} together with Lemma~\ref{lem:FG}. The implication (2)$\implies$(1) follows from the equality $\chi_\phi = \ev_1\circ \pi\circ N_\phi$. So we will only be concerned with (1)$\implies$(2).

Assume $\chi_\phi$ extends to a state on $C^*(S)$. Following the proof of Proposition~\ref{prop:eq-norm} we see that $F(\phi - \phi\circ\sigma)$ is the matrix of a trace class operator, and it is also positive definite. Going through the proof of \cite[Theorem 2.3]{HSS} we make the following observation. When $Fh = F(\phi - \phi\circ\sigma)$ is positive definite, we may choose $P_k = Q_k$, and $\tilde\phi$ becomes a positive definite kernel on the tree $X$. Since $\tilde\phi(x,x) = \phi(0) = 1$ for every $x\in X$, we deduce that $m_{\tilde\phi}$ is u.c.p. Finally, $N_\phi = \beta^{-1} \circ m_{\tilde\phi} \circ\beta$ is also u.c.p., where $\beta$ is the $^*$-isomorphism from $C^*(S) \to C^*(U)$ from before.
\end{proof}

\begin{cor}\label{cor:q4}
Let $\theta:\N_0^2\to\C$ be a function. If $G(\theta - \frac12\theta(0,0))$ is positive definite, and $F(\theta - \theta\circ\sigma)$ is positive definite, then
$$
||N_{e^{-t\theta}}|| \leq 1 \quad \text{for every } t>0.
$$
\end{cor}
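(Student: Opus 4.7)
The plan is to imitate the proof of Corollary~\ref{cor:4}, with Proposition~\ref{prop:q-thm1} in place of Proposition~\ref{prop:thm1} and $N$ in place of $M$. First I would set $c = \tfrac12\theta(0,0)$ and $\phi = \theta - c$. Since $G$ is linear and fixes the constant function $1$ (hence every constant), and $\phi - \phi\circ\sigma = \theta - \theta\circ\sigma$, the kernel $\phi$ satisfies both $G\phi$ and $F(\phi - \phi\circ\sigma)$ positive definite, and moreover $\phi(0,0) = G\phi(0,0) = c \geq 0$.

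For the main case $c > 0$, I would apply Proposition~\ref{prop:q-thm1} to the rescaling $\phi/c$ to conclude that $N_{\phi/c}$ extends to a u.c.p.\ map on $C^*(S)$, so $N_\phi$ extends to a completely positive map with $\|N_\phi\| \leq c$. The key algebraic point is that $\{S_{m,n}\}_{m,n\in\N_0}$ is a basis of $D$: by \eqref{eq:Smn} and \eqref{eq:recursion} the passage between $\{S_{m,n}\}$ and the linearly independent set $\{S^m(S^*)^n\}$ is triangular in the grading by $m+n$ with nonzero diagonal. Hence $\phi \mapsto N_\phi$ turns pointwise products of symbols into composition on $D$, giving $N_{\phi^k} = N_\phi^k$, and summing the exponential series shows that $\exp(-tN_\phi)$ agrees with $N_{e^{-t\phi}}$ on the dense subspace $D$. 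This yields $\|N_{e^{-t\phi}}\| \leq e^{t\|N_\phi\|} \leq e^{tc}$. Since $e^{-t\theta} = e^{-tc}e^{-t\phi}$ pointwise, we conclude $\|N_{e^{-t\theta}}\| = e^{-tc}\|N_{e^{-t\phi}}\| \leq 1$.

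The degenerate case $c = 0$ is handled by approximation. I would put $\theta_\epsilon = \theta + 2\epsilon$ for $\epsilon > 0$; the hypotheses pass to $\theta_\epsilon$ (again because $G$ fixes constants, and $\theta_\epsilon - \theta_\epsilon\circ\sigma = \theta - \theta\circ\sigma$), while $\tfrac12\theta_\epsilon(0,0) = \epsilon > 0$. The main case then gives $\|N_{e^{-t\theta_\epsilon}}\| \leq 1$, and since $N_{e^{-t\theta_\epsilon}} = e^{-2t\epsilon}N_{e^{-t\theta}}$, letting $\epsilon \downarrow 0$ gives $\|N_{e^{-t\theta}}\| \leq 1$.

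The one step that is not immediate from what has already been set up is the identification $N_{e^{-t\phi}} = \exp(-tN_\phi)$; this reduces to the basis property of $\{S_{m,n}\}$ in $D$, which makes $N$ multiplicative in the symbol. Once that is in hand the exponential estimate is the usual operator-norm bound $\|\exp(-tN_\phi)\| \leq \exp(t\|N_\phi\|)$, and everything else is a direct transcription of the reasoning in Corollary~\ref{cor:4}.
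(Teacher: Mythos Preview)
Your argument is correct and follows the same route as the paper's proof: set $\phi = \theta - \tfrac12\theta(0,0)$, use Proposition~\ref{prop:q-thm1} to obtain $\|N_\phi\| = \phi(0,0)$, and then conclude via $\|N_{e^{-t\theta}}\| = e^{-t\phi(0,0)}\|N_{e^{-t\phi}}\| \leq e^{-t\phi(0,0)} e^{t\|N_\phi\|} = 1$. The paper simply asserts the inequality $\|N_{e^{-t\phi}}\| \leq e^{t\|N_\phi\|}$ without further comment, whereas you supply the underlying reason (the $S_{m,n}$ form a basis of $D$, so $N$ is multiplicative in the symbol and $N_{e^{-t\phi}} = \exp(-tN_\phi)$ on $D$); your separate treatment of $c=0$ is harmless but unnecessary, since $G\phi$ positive definite with $G\phi(0,0)=0$ together with $F(\phi-\phi\circ\sigma)$ positive definite forces $\chi_\phi = \omega_{G\phi}$ to be the zero functional, hence $\phi = 0$.
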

\begin{proof}
Observe first that for any $\phi:\N_0^2\to\C$ we have $\|N_{e^{-t\phi}}\| \leq e^{t\| N_\phi\|}$.

Let $\phi = \theta - \frac12\theta(0,0)$. From Proposition~\ref{prop:q-thm1} we deduce that $||N_\phi|| = \phi(0,0)$. Since $\theta = \phi + \phi(0,0)$ we find
$$
\| N_{e^{-t\theta}} \| = e^{-t\phi(0,0)} \| N_{e^{-t\phi}} \| \leq e^{-t\phi(0,0)} e^{t\| N_{\phi} \| } = 1.
$$
\end{proof}

\begin{prop}\label{prop:q-thm2}
Let $\psi:\N_0\times\N_0\to\C$ be a function. Then the following are equivalent.
\begin{enumerate}
	\item For all $t>0$ the function $e^{-t\psi}$ satisfies the equivalent conditions in Proposition~\ref{prop:q-thm1}.
	\item $G\psi$ is a conditionally negative definite kernel, $F(\psi\circ\sigma - \psi)$ is a positive definite kernel, and $\psi(0,0) = 0$.
\end{enumerate}
\end{prop}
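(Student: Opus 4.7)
The plan is to mirror the proof of Proposition~\ref{prop:thm2}, threading the transformations $F,G$ through the argument via the identities $\chi_\phi = \omega_{G\phi}$ (Proposition~\ref{prop:translate}) and $G\phi - G\phi\circ\sigma = F(\phi - \phi\circ\sigma)$ (Lemma~\ref{lem:FG}). Unwinding condition~(1) through Proposition~\ref{prop:q-thm1}(3), what must be shown is that for every $t>0$ we have $\psi(0,0)=0$, that $G(e^{-t\psi})$ is positive definite, and that $F(e^{-t\psi} - e^{-t\psi\circ\sigma})$ is positive definite.

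For (1) $\Rightarrow$ (2), I would differentiate pointwise at $t = 0^+$. For each $t > 0$ the kernel $(1 - G(e^{-t\psi}))/t = G((1 - e^{-t\psi})/t)$ is conditionally negative definite, because $G(e^{-t\psi})$ is positive definite with value $1$ at $(0,0)$ (so its negative is CND, and adding $1$ preserves this). By linearity and pointwise continuity of $G$ on $\mc V$, these kernels converge pointwise to $G\psi$ as $t \to 0^+$, and hence $G\psi$ is CND since CND kernels are closed under pointwise limits. The same argument with $F$ in place of $G$ and positive definiteness in place of CND yields that $F(\psi\circ\sigma - \psi)$ is positive definite. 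The identity $\psi(0,0)=0$ is forced by $e^{-t\psi(0,0)} = 1$.

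For (2) $\Rightarrow$ (1), I plan to route through the homogeneous tree $X$ of degree $q+1$ from Section~\ref{sec:FG}. The goal is to prove that the tree lift $\tilde\psi : X \times X \to \C$ of Definition~\ref{defi:mn} is conditionally negative definite on $X$. Granted this, Schoenberg gives $\widetilde{e^{-t\psi}} = e^{-t\tilde\psi}$ positive definite on $X$ with value $1$ on the diagonal, so $m_{\widetilde{e^{-t\psi}}}$ is u.c.p.\ on $B(\ell^2(X))$; restricting to $C^*(U)$ and conjugating by the $^*$-isomorphism $\beta$ (as in the paragraph preceding Proposition~\ref{prop:eq-norm}) identifies this restriction with $N_{e^{-t\psi}}$, which is therefore u.c.p., giving condition~(2) of Proposition~\ref{prop:q-thm1}.

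The main step is thus to establish CND of $\tilde\psi$ on $X$. By Lemma~\ref{lem:FG}, condition~(2) of the present proposition says exactly that $G\psi$ satisfies the hypotheses~(2) of Proposition~\ref{prop:thm2}, so in the real case Proposition~\ref{prop:thm2}(3) supplies a concrete Hilbert-space decomposition of $G\psi$ into a geometric part and a diagonal-invariant CND term $\psi_0(k-l)$. My plan is to pull this back to $\psi$ via $G^{-1}$ (which fixes functions of $k-l$), and then lift to the tree by adapting the Hilbert-space construction used in the proof of Proposition~\ref{prop:eq-norm} (where the choice $P_k=Q_k$ produced a positive definite tree kernel from a positive trace-class operator on $\ell^2(\N_0)$); here one seeks instead a norm-squared-difference kernel on $X$, witnessing CND. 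The diagonal-invariant part $\psi_0(k-l)$ lifts to a CND kernel on $X$ via Lemma~\ref{lem:phi0} applied to $e^{-s\psi_0}$ and differentiated at $s=0$. The main obstacle will be carrying out this explicit tree realization, i.e.\ the CND analogue of the positive definite construction in Proposition~\ref{prop:eq-norm}, with careful bookkeeping of vectors indexed by paths in $X$.
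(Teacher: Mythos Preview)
Your argument for (1)$\Rightarrow$(2) is exactly the paper's: differentiate condition~(3) of Proposition~\ref{prop:q-thm1} at $t=0^+$, using continuity of $F$ and $G$.

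For (2)$\Rightarrow$(1) you and the paper take genuinely different routes. You propose to realize the tree lift $\tilde\psi$ as a conditionally negative definite kernel on $X$ and then invoke Schoenberg on $X$. The paper instead avoids the tree altogether and argues by approximation: since $G\psi$ is conditionally negative definite, $e^{-sG\psi}$ is positive definite; setting $\rho_s = G^{-1}\bigl(e^{-sG\psi}/s\bigr)$ one has $G\rho_s$ positive definite and, via Lemma~\ref{lem:FG} and a power-series expansion, $F(\rho_s - \rho_s\circ\sigma)$ positive definite. Thus $s\rho_s$ satisfies Proposition~\ref{prop:q-thm1}, so $N_{\rho_s}$ is completely positive, whence $N_{e^{t\rho_s}} = \sum_k t^k (N_{\rho_s})^k/k!$ is completely positive; since $e^{-t(1/s-\rho_s)} = e^{-t/s}e^{t\rho_s}$ has value $1$ at $(0,0)$, it satisfies Proposition~\ref{prop:q-thm1}, and the pointwise limit $1/s - \rho_s \to \psi$ as $s\to 0$ (together with closure of condition~(3) under pointwise limits) finishes. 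This is short and needs no new geometric construction.

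Your tree route is conceptually natural but carries two burdens the paper's argument does not. First, your appeal to Proposition~\ref{prop:thm2}(3) requires $G\psi$ real-valued, so as stated your plan only treats real $\psi$, whereas the proposition is for complex $\psi$. Second, the step you flag as ``the main obstacle'' is genuinely nontrivial: the positive definite tree realization in Proposition~\ref{prop:eq-norm} rests on $Fh$ being trace class, and the CND analogue would require building an explicit map $X\to H$ from the unbounded Hilbert-space data of Proposition~\ref{prop:thm2}(3) \emph{after} threading it through $G^{-1}$, which does not obviously preserve the norm-squared-difference form. The paper's approximation trick sidesteps both issues entirely.
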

\begin{proof}
(1)$\implies$(2): Assume that condition (3) of Proposition~\ref{prop:q-thm1} holds for $e^{-t\psi}$ for each $t > 0$. Then $Ge^{-t\psi}$ is positive definite. It follows that $1- Ge^{-t\psi}$ is a conditionally negative definite kernel, and therefore so is the pointwise limit
$$
\lim_{t\to0}\frac{1 - Ge^{-t\psi}}{t} = \lim_{t\to0}G\left(\frac{1 - e^{-t\psi}}{t}\right) = G\psi.
$$
Since $e^{-t\psi(0,0)} = 1$, we get $\psi(0,0) = 0$. Moreover, $F(e^{-t\psi} - e^{-t\psi\circ\sigma})$ is positive definite by assumption. It follows that the pointwise limit
$$
\lim_{t\to0} F\left( \frac{e^{-t\psi} - e^{-t\psi\circ\sigma}}{t} \right) = F(\psi\circ\sigma - \psi)
$$
is positive definite.

(2)$\implies$(1):
First we note that the set of functions $\phi$ satisfying the equivalent conditions of Proposition~\ref{prop:q-thm1} is closed under products and pointwise limits. Stability under products is most easily established using condition (2), while closure under pointwise limits is most easily seen in condition (3).

By assumption $G\psi$ is conditionally negative definite, so the function $e^{-sG\psi}$ is positive definite for each $s> 0$. We let
$$
\rho_s = G^{-1}\left( \frac{e^{-sG\psi}}{s} \right),
$$
so that $G\rho_s$ is positive definite, and
\begin{align}
\frac1s - \rho_s \to G^{-1}G\psi = \psi
\end{align}
pointwise as $s\to 0$. Using Lemma~\ref{lem:FG} we see that $F(\rho_s - \rho_s\circ\sigma) = G\rho_s - (G\rho_s) \circ\sigma$, so
$$
F(\rho_s - \rho_s\circ\sigma) = \frac{ \left( e^{-sG\psi} - e^{-s(G\psi)\circ\sigma} \right) }{s}= \frac{ e^{-s(G\psi)\circ\sigma} \left( e^{s((G\psi)\circ\sigma - G\psi)} - 1 \right) }{s}.
$$
Since $G\psi$ is conditionally negative definite, so is $(G\psi)\circ\sigma$, and hence $e^{-s(G\psi)\circ\sigma}$ is positive definite. Expanding the exponential function gives
$$
e^{s((G\psi)\circ\sigma - G\psi)} - 1 = \sum_{n=1}^\infty \frac{(sF(\psi\circ\sigma - \psi))^n}{n!}.
$$
Since $F(\psi\circ\sigma - \psi)$ is positive definite, so are its powers and hence the sum in the above equation. It follows that $F(\rho_s - \rho_s\circ\sigma)$ is a product of two positive definite functions and hence itself positive definite.

Looking at Proposition~\ref{prop:q-thm1} we see that $N_{\rho_s}$ is completely positive. It follows that $N_{e^{t\rho_s}}$ is completely positive, so $e^{-t (\frac1s - \rho_s)}$ satisfies the conditions of Proposition~\ref{prop:q-thm1}. Finally, since
$$
e^{-t\psi} = \lim_{s\to 0} e^{-t (\frac1s - \rho_s)},
$$
we conclude that $e^{-t\psi}$ satisfies the conditions of Proposition~\ref{prop:q-thm1}.
\end{proof}

\subsection{The linear bound}

As in the case of $\F_\infty$ we prove that if a kernel $\phi$ satisfies $\|\chi_{e^{-t\phi}}\| \leq 1$ for every $t>0$, then it splits in a useful way. We are also able to compare norms of radial Herz-Schur multipliers on $\F_n$ with norms of functionals on the Toeplitz algebra. These are Theorem~\ref{thm:q-split} and Proposition~\ref{prop:q-norms}.

Recall the definition of the set $\mc S$ from Definition~\ref{defi:S}.

\begin{lem}\label{lem:5}
Let $\phi:\N_0^2\to\C$ be a function. If $G\phi\in\mc S$, then
$$
||N_{e^{-t\phi}}|| \leq 1 \quad\text{for every } t>0.
$$
\end{lem}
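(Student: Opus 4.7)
The plan is to mirror the proof of Theorem~\ref{thm:split} but using the ``twisted'' objects $N_\phi$, $\chi_\phi$, $F$ and $G$ in place of $M_\phi$, $\omega_\phi$ and the identity. The starting point is to split $G\phi$, using the hypothesis $G\phi\in\mc S$, as $G\phi = \psi + \theta$ with $\psi,\theta$ as in Definition~\ref{defi:S}. Applying $G^{-1}$ (which is linear on $\mc V$) gives the splitting $\phi = \psi' + \theta'$ where $\psi' = G^{-1}\psi$ and $\theta' = G^{-1}\theta$.

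Next, I would verify that $\psi'$ fits the hypotheses of Proposition~\ref{prop:q-thm2} and that $\theta'$ fits the hypotheses of Corollary~\ref{cor:q4}. For $\psi'$: by construction $G\psi' = \psi$ is conditionally negative definite with $\psi(0,0)=0$, and since $G\phi'(0,0)=\phi'(0,0)$ for any $\phi'\in\mc V$ we get $\psi'(0,0)=0$; moreover Lemma~\ref{lem:FG} gives
$$F(\psi'\circ\sigma - \psi') = G\psi'\circ\sigma - G\psi' = \psi\circ\sigma - \psi,$$
which is positive definite by assumption. Hence by Proposition~\ref{prop:q-thm2} the multiplier $N_{e^{-t\psi'}}$ is u.c.p.\ for every $t>0$, so $\|N_{e^{-t\psi'}}\|=1$. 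For $\theta'$: using that $G$ preserves the constant function $1$ and diagonal $(0,0)$ values, we have $G(\theta' - \tfrac12\theta'(0,0)) = \theta - \tfrac12\theta(0,0)$, which is positive definite, and Lemma~\ref{lem:FG} again gives $F(\theta' - \theta'\circ\sigma) = \theta - \theta\circ\sigma$, which is also positive definite. Corollary~\ref{cor:q4} then yields $\|N_{e^{-t\theta'}}\|\leq 1$ for every $t>0$.

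Finally, I would combine the two bounds multiplicatively. Since $\phi = \psi' + \theta'$ implies $e^{-t\phi} = e^{-t\psi'}\cdot e^{-t\theta'}$ as functions on $\N_0^2$, and since $N_{\phi_1\phi_2}(S_{m,n}) = \phi_1(m,n)\phi_2(m,n)S_{m,n} = N_{\phi_1}\circ N_{\phi_2}(S_{m,n})$ on all of $D$, we get $N_{e^{-t\phi}} = N_{e^{-t\psi'}} \circ N_{e^{-t\theta'}}$ on $D$, hence on $C^*(S)$. Taking norms gives
$$\|N_{e^{-t\phi}}\| \leq \|N_{e^{-t\psi'}}\|\,\|N_{e^{-t\theta'}}\| \leq 1.$$
I do not expect any serious obstacle here: the only thing to check carefully is that $G^{-1}$ and the transformation identities in Lemma~\ref{lem:FG} correctly convert the $\mc S$-conditions on $G\phi$ into the hypotheses of Proposition~\ref{prop:q-thm2} and Corollary~\ref{cor:q4}. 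Once that bookkeeping is done, multiplicativity of $N_\phi$ in $\phi$ closes the argument.
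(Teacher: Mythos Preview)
Your proposal is correct and follows essentially the same approach as the paper: the paper writes the splitting directly as $G\phi = G\psi + G\theta$ (so your $\psi',\theta'$ are the paper's $\psi,\theta$), then invokes Proposition~\ref{prop:q-thm2} and Corollary~\ref{cor:q4} and combines via $\|N_{e^{-t\phi}}\| \leq \|N_{e^{-t\psi}}\|\,\|N_{e^{-t\theta}}\|$. Your version is slightly more explicit about the bookkeeping (checking $\psi'(0,0)=0$, $\theta'(0,0)=\theta(0,0)$, and applying Lemma~\ref{lem:FG}), but the argument is the same.
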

\begin{proof}
Suppose $G\phi\in\mc S$ and write $G\phi = G\psi + G\theta$, where
\begin{itemize}
\renewcommand{\labelitemi}{$\cdot$}
	\item $G\psi$ is a conditionally negative definite kernel with $G\psi(0,0) = 0$,
	\item $F(\psi\circ\sigma - \psi)$ is a positive definite kernel,
	\item $G(\theta - \frac12\theta(0,0))$ is a positive definite kernel,
	\item $F(\theta - \theta\circ\sigma)$ is a positive definite kernel.
\end{itemize}
Then also $\phi = \psi + \theta$. From Proposition~\ref{prop:q-thm2} we get that $N_{e^{-t\psi}}$ is a u.c.p. map for every $t>0$, and hence $||N_{e^{-t\psi}}|| = 1$. Also, from Corollary~\ref{cor:q4} we get that $||N_{e^{-t\theta}}|| \leq 1$ for every $t>0$. This combines to show
$$
||N_{e^{-t\phi}}|| \leq ||N_{e^{-t\psi}}|| \ ||N_{e^{-t\theta}}|| \leq 1.
$$

\end{proof}

\begin{lem}
Let $\phi:\N_0^2\to\C$ be a hermitian function. If $\| \chi_\phi \| \leq 1$, then $1 - G\phi \in\mc S$.
\end{lem}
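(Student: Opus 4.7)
The plan is to reduce this directly to the already-established Lemma~\ref{lem:2}, via the translation dictionary between $\chi_\phi$ and $\omega_{G\phi}$.

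First I would observe that, by Proposition~\ref{prop:translate}, the functional $\chi_\phi$ is nothing but $\omega_{G\phi}$. Consequently the hypothesis $\|\chi_\phi\|\leq 1$ is the same as $\|\omega_{G\phi}\|\leq 1$. Next I would check that $G\phi$ is hermitian: this is exactly the content of Lemma~\ref{lem:sa}, which says $G$ preserves hermitianity, and we are told $\phi$ is hermitian.

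With these two observations in hand, the hypothesis of Lemma~\ref{lem:2} is satisfied for the function $G\phi$ in place of $\phi$. Applying that lemma gives $1-G\phi\in\mc S$, which is precisely the conclusion we want.

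The only step that required any thought was identifying that the natural transported statement is $\omega_{G\phi}$ rather than, say, $\omega_\phi$; but this is forced by Proposition~\ref{prop:translate}, and there is no real obstacle here. In particular, no new Hahn--Jordan style decomposition needs to be performed directly on $C^*(S)$, because the work was already done inside Lemma~\ref{lem:2}.
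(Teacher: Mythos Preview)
Your proof is correct and is essentially identical to the paper's own proof, which simply cites Proposition~\ref{prop:translate}, Lemma~\ref{lem:2}, and Lemma~\ref{lem:sa} in exactly the way you describe.
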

\begin{proof}
Use Proposition~\ref{prop:translate} together with Lemma~\ref{lem:2} and Lemma~\ref{lem:sa}.
\end{proof}

\begin{thm}\label{thm:q-split}
Let $\phi:\N_0^2\to\C$ be a hermitian function. Then $G\phi\in\mc S$ if and only if $\| \chi_{e^{-t\phi}} \| \leq 1$ for every $t>0$.
\end{thm}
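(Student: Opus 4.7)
The proof should follow exactly the template of Theorem~\ref{thm:split}, the only differences being the insertion of the transformation $G$ at the appropriate places and the replacement of $\omega$ by $\chi$. All the required building blocks are already established by the time we reach Theorem~\ref{thm:q-split}.

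For the reverse implication (``$G\phi\in\mc S$ $\implies$ bound on $\|\chi_{e^{-t\phi}}\|$'') I would simply chain Lemma~\ref{lem:5} with Proposition~\ref{prop:eq-norm}: the former gives $\|N_{e^{-t\phi}}\|\leq 1$ for every $t>0$, and the latter then yields $\|\chi_{e^{-t\phi}}\| = \|N_{e^{-t\phi}}\|\leq 1$. This is a one-line argument.

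For the forward implication, assume $\|\chi_{e^{-t\phi}}\|\leq 1$ for every $t>0$. By the lemma immediately preceding Theorem~\ref{thm:q-split} (applied to the hermitian function $e^{-t\phi}$, which is hermitian since $\phi$ is), we obtain
\[
1 - G(e^{-t\phi}) \in \mc S \qquad \text{for every } t>0.
\]
Using that $\mc S$ is stable under multiplication by positive numbers, we get $(1 - G(e^{-t\phi}))/t \in \mc S$. Since $G$ is linear, continuous on $\mc V$ (i.e.\ pointwise continuous) and fixes the constant function $1$, we have
\[
\frac{1 - G(e^{-t\phi})}{t} = G\!\left(\frac{1 - e^{-t\phi}}{t}\right) \longrightarrow G\phi
\]
pointwise as $t\to 0^+$. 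An appeal to Lemma~\ref{lem:Sclosed} (closedness of $\mc S$ under pointwise convergence) concludes that $G\phi \in \mc S$.

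The only step that requires any care is the pointwise passage to the limit under $G$; but $G$ is defined by an explicit finite linear recursion in $\tau,\tau^*$ (see the formula $G^{-1} = \id + \tfrac{1}{q-1}(\tau\circ\tau^* - \tau)$ and its inversion), so evaluating $G\psi$ at a single pair $(k,l)$ only involves finitely many values of $\psi$, which makes pointwise continuity automatic. Thus the whole argument is essentially a cosmetic modification of the proof of Theorem~\ref{thm:split}, and I do not anticipate any real obstacle.
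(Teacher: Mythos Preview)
Your proposal is correct and follows exactly the paper's own proof: the forward direction uses the preceding lemma, division by $t$, pointwise continuity of $G$, and Lemma~\ref{lem:Sclosed}, while the converse is precisely Lemma~\ref{lem:5} combined with Proposition~\ref{prop:eq-norm}. The only difference is cosmetic (the paper writes $G(1-e^{-t\phi})$ where you write $1-G(e^{-t\phi})$, which coincide since $G$ fixes constants).
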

\begin{proof}
Suppose $\| \chi_{e^{-t\phi}} \| \leq 1$ for every $t>0$. From the previous lemma we see that $G(1 - e^{-t\chi})$ lies in $\mc S$ for every $t > 0$. Hence, so does $G(1 - e^{-t\chi}) / t$ which converges pointwise to $G\chi$ as $t \to 0$. It now follows from Lemma~\ref{lem:Sclosed} that $G\chi\in\mc S$.

The converse direction is Lemma~\ref{lem:5} combined with Proposition~\ref{prop:eq-norm}.
\end{proof}

%\begin{lem}
%If $G\phi$ is linearly bounded, then so is $\phi$.
%\end{lem}
%\begin{proof}
%Let $a,b\geq 0$ be such that $|G\phi(m,n)| \leq b + a(m+n)$ for all $m,n\in\N_0$. When $\min\{m,n\} = 0$ we trivially have $|\phi(m,n)| \leq b + a(m+n)$. From the inversion formula \eqref{eq:Ginv} we see that when $m,n\geq 1$ then
%\begin{align*}
%|\phi(m,n)| &\leq \left( 1 - \frac1q \right)^{-1} \left( |G\phi(m,n)| + \frac1q |G\phi(m-1,n-1)| \right)
%\\ &\leq  \left( 1 - \frac1q \right)^{-1} \left(b + a(m+n) + \frac1q (b + a(m+n-2)) \right)
%\\ &\leq \left( 1 - \frac1q \right)^{-1} \left( 1 + \frac1q\right) (b + a(m+n)) 
%\end{align*}
%\end{proof}

\begin{lem}\label{lem:bounded}
Let $h\in\mc V$. Then $h$ is bounded if and only if $F(h)$ is bounded.
\end{lem}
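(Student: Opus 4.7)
The plan is to exploit the fact that both $F$ and $F^{-1}$ are given by explicit, simple expressions in $\tau$, so boundedness transfers in both directions via the triangle inequality.

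First I would unpack $F$. Writing out $\tau^n$, we have $(\tau^n h)(k,l) = h(k-n,l-n)$ when $k,l \geq n$ and $0$ otherwise. Therefore
\[
F(h)(k,l) \;=\; \Bigl(1-\tfrac{1}{q}\Bigr)\sum_{n=0}^{\min\{k,l\}} \frac{h(k-n,\,l-n)}{q^n}.
\]
If $|h(k,l)| \leq M$ for all $k,l$, the triangle inequality gives
\[
|F(h)(k,l)| \;\leq\; \Bigl(1-\tfrac{1}{q}\Bigr) M \sum_{n=0}^\infty q^{-n} \;=\; M,
\]
so $F(h)$ is bounded by the same constant. This handles the forward direction.

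For the converse, the key observation is that $F$ has an explicit inverse, namely
\[
F^{-1} \;=\; \Bigl(1-\tfrac{1}{q}\Bigr)^{-1}\Bigl(\id - \tfrac{\tau}{q}\Bigr),
\]
as is clear from formula \eqref{eq:F}. Applying this to $F(h)$ gives
\[
h(k,l) \;=\; \Bigl(1-\tfrac{1}{q}\Bigr)^{-1}\Bigl(F(h)(k,l) - \tfrac{1}{q}\,F(h)(k-1,l-1)\Bigr)
\]
when $k,l \geq 1$, with the second term absent when $\min\{k,l\}=0$. If $|F(h)(k,l)| \leq M'$ everywhere, the triangle inequality yields
\[
|h(k,l)| \;\leq\; \Bigl(1-\tfrac{1}{q}\Bigr)^{-1}\Bigl(1+\tfrac{1}{q}\Bigr) M' \;=\; \frac{q+1}{q-1}\, M',
\]
so $h$ is bounded. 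There is no real obstacle here; the lemma is essentially bookkeeping once one notices that although $F$ is defined as an infinite series, its inverse is a two-term expression, which keeps the bound on $h$ under control.
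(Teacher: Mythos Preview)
Your proof is correct and follows essentially the same route as the paper: both directions rest on the observation that $F$ is an $\ell^1$-convergent series in $\tau$ (giving the forward bound via the geometric series) while $F^{-1}=(1-\tfrac1q)^{-1}(\id-\tfrac{\tau}{q})$ is a two-term expression (giving the reverse bound). The paper phrases the converse as ``$F^{-1}$ preserves boundedness'' rather than writing out $h=F^{-1}(F(h))$ explicitly, but this is purely cosmetic.
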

\begin{proof}
Let $h\in \mc V$ be bounded. We prove that $F(h)$ and $F^{-1}(h)$ are bounded. This will complete the proof.

Observe that $\tau(h)$ is bounded with the same bound as $h$. Then $(\id - \tau/q)(h)$ is bounded, so
$$
F^{-1}(h) = \left( 1 - \frac1q \right)^{-1} \left( \id - \frac{\tau}{q} \right)(h)
$$
is also bounded.

Suppose $c\geq 0$ is a bound for $h$. Using \eqref{eq:F} we find
$$
|Fh(m,n)| \leq \left( 1 - \frac1q \right) \sum_{k=0}^\infty \frac{|\tau^k(h)(m,n)|}{q^k} \leq \left( 1 - \frac1q \right) \sum_{k=0}^\infty \frac{c}{q^k} =  c.
$$
This proves that $F(h)$ is bounded as well.
\end{proof}

\begin{prop}\label{prop:q-bound}
If $\phi$ is a Hankel function, and $G\phi\in\mc S$, then $\phi$ is linearly bounded.
\end{prop}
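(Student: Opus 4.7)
The plan is to mirror the proof of Proposition~\ref{prop:bound}, inserting the operator $F$ at the appropriate places. First I would decompose $G\phi = \psi + \theta$ as in Definition~\ref{defi:S}, set $h_1 := \psi\circ\sigma - \psi$ and $h_2 := \theta - \theta\circ\sigma$, and apply Lemma~\ref{lem:3} to $\psi$ together with Corollary~\ref{cor:4} to $\theta$ to obtain vectors $\eta_k,\xi_k$ in Hilbert spaces such that $h_1(m,n) = \la\eta_m,\eta_n\ra$, $h_2(m,n) = \la\xi_m,\xi_n\ra$, $\|\eta_k\| \leq c(k+1)$, and $\sum_k \|\xi_k\|^2 < \infty$. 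In particular, $h_2$ is the matrix of a positive trace class operator, so $d := \|h_2\| < \infty$.

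Next, since $\phi$ is Hankel I would set $\dot h(k) = \dot\phi(k+2) - \dot\phi(k)$ and $H := \phi\circ\sigma - \phi$, so $H(m,n) = \dot h(m+n)$. Lemma~\ref{lem:FG} gives $F(H) = G\phi\circ\sigma - G\phi = h_1 - h_2$. Two consequences of the definition of $F$ drive the argument: at the boundary, $F(H)(0,n) = (1-1/q)\dot h(n)$, since $\tau^k H$ vanishes at $(0,n)$ for $k \geq 1$; and on the diagonal, $F(H)(n,n) = (1-1/q)\sum_{k=0}^n \dot h(2(n-k))/q^k$, whose absolute value is at most $M(2n) := \sup_{0 \leq j \leq 2n}|\dot h(j)|$. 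The boundary identity together with the bounds on $\eta_k,\xi_k$ immediately yields linear growth $|\dot h(n)| \leq C_1(n+1)$ for some constant $C_1$. Moreover, $F(H) + dI = h_1 + (dI - h_2)$ is a positive definite kernel, so Cauchy-Schwarz applied at $(0,n)$ combined with the diagonal estimate yields the key inequality $M(n)^2 \leq C(M(2n) + d)$ for some constant $C$.

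The hard part will be the final step: the clean doubling argument of Proposition~\ref{prop:bound} relied on the Hankel identity $H(n,n) = H(0,2n)$, which fails for $F(H)$, so one obtains only the weaker inequality $M(n)^2 \leq C(M(2n)+d)$ rather than $|\dot h(n)|^2 \leq (\dot h(0)+d)(\dot h(2n)+d)$. To handle this I would bootstrap as follows. Writing $M(n)^2 \leq 2C\max(M(2n),d)$, either $M(n) \leq \sqrt{2Cd}$, or $M(n) \leq \sqrt{2C\,M(2n)}$, in which case iteration gives $M(n) \leq (2C)^{1 - 2^{-k}} M(2^k n)^{1/2^k}$ for every $k$. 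Combined with the initial linear bound $M(2^k n) \leq C_1(2^k n + 1)$ and letting $k \to \infty$ for fixed $n$, the factor $M(2^k n)^{1/2^k}$ tends to $1$ while $(2C)^{1-2^{-k}} \to 2C$, yielding the uniform bound $M(n) \leq \max(2C,\sqrt{2Cd})$. Thus $\dot h$ is bounded, which by induction on the parity of $k$ gives $|\dot\phi(k)| \leq b + ak$ for some constants $a,b$, so $|\phi(m,n)| = |\dot\phi(m+n)| \leq b + a(m+n)$, as required.
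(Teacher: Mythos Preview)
Your argument is correct and tracks the paper's proof closely: the same decomposition of $G\phi$, the same use of Lemma~\ref{lem:FG} to identify $F(\phi\circ\sigma-\phi)$ with $h_1-h_2$, the same boundary identity $F(H)(0,n)=(1-1/q)\dot h(n)$ to extract a preliminary linear bound, and the same Cauchy--Schwarz step on the positive kernel $F(H)+dI$ leading to an inequality of the form $|\dot h(n)|^2\le C(M(2n)+d)$.

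The only genuine difference is in the endgame. The paper argues by contradiction: assuming $|\dot h(n_0)|$ exceeds an explicit threshold, it manufactures a sequence $k_m\le 2^{m+1}n_0$ with $|\dot h(k_m)|\ge 2^{2^m}(2a+b)$, which collides with the linear bound. You instead work directly with the non-decreasing envelope $M(n)=\sup_{j\le n}|\dot h(j)|$, iterate $M(n)\le(2C)^{1-2^{-k}}M(2^kn)^{1/2^k}$, and pass to the limit using the linear bound to force $M(2^kn)^{1/2^k}\to 1$. Both arguments exploit the same tension (a squaring recursion versus at-most-linear growth); your version is a bit slicker because $M$ absorbs the $\max$ over the diagonal sum automatically. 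One small point to make explicit: the iterated inequality requires the ``case~2'' branch at every scale $2^jn$, but since $M$ is non-decreasing, falling into case~1 at any stage already gives $M(n)\le d$ (or $\sqrt{2Cd}$), so the dichotomy closes either way.
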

\begin{proof}
Suppose $\phi\in\mc V$ is a Hankel function, and $G\phi\in\mc S$. Let $h = \phi\circ\sigma - \phi$. We wish to prove that $h$ is bounded, since this will give the desired bound on $\phi$. Observe that $h$ is also a Hankel function. We write $h(m,n) = \dot h(m+n)$ for some function $\dot h:\N_0\to\C$.

Since $G\phi$ lies in $\mc S$, there is a splitting of the form $G\phi = G\psi + G\theta$, where
\begin{itemize}
\renewcommand{\labelitemi}{$\cdot$}
	\item $G\psi$ is a conditionally negative definite kernel with $G\psi(0,0) = 0$,
	\item $F(\psi\circ\sigma - \psi)$ is a positive definite kernel,
	\item $G(\theta - \frac12\theta(0,0))$ is a positive definite kernel,
	\item $F(\theta - \theta\circ\sigma)$ is a positive definite kernel.
\end{itemize}
Write $h_1 = \psi\circ\sigma - \psi$ and $h_2 = \theta - \theta\circ\sigma$, and observe that $h = h_1 - h_2$. By the above, Corollary~\ref{cor:4} and Proposition~\ref{prop:thm2} we see that there are vectors $\xi_k,\eta_k$ in a Hilbert space such that
$$
Fh_1(m,n) = \la \eta_m,\eta_n\ra,
\quad
Fh_2(m,n) = \la \xi_m,\xi_n\ra,
$$
for all $m,n\in\N_0$ and
$$
\sum_{k=0}^\infty \|\eta_k - \eta_{k+1}\|^2 < \infty,
\quad
\sum_{k=0}^\infty \|\xi_k\|^2 < \infty.
$$
From this we see that $Fh_2$ is the matrix of a positive trace class operator, and from Lemma~\ref{lem:bounded} we see that $h_2$ is a bounded function. Also, there is $c > 0$ such that $\|\eta_k - \eta_{k+1}\| \leq c$ for every $k$ and $\|\eta_0\| \leq c$, and so we get the linear bound $\|\eta_k\| \leq c(k+1)$. We may even choose $c$ such that $|h_2(m,n)|\leq c^2$ for every $m,n\in\N_0$. From the Cauchy-Schwarz inequality we get
$$
|Fh_1(m,n)| \leq c^2(m+1)(n+1).
$$

We remark that $Fh(0,n) = \left( 1- \frac1q\right) h(0,n)$, so the above with $m = 0$ gives us 
$$
|h_1(0,n)| = \left( 1- \frac1q\right)^{-1} |Fh_1(0,n)| \leq 2c^2(n+1).
$$
Putting all this together gives the linear bound
\begin{align}\label{eq:linear-bound}
|\dot h(n)| = |h_1(0,n) - h_2(0,n)| \leq 2c^2(n+1) + c^2 \leq 2c^2(n+2).
\end{align}

Since $Fh_2 \leq \|Fh_2\| I \leq \|Fh_2\|_1 I = \|h_2\|_1 I$ (as positive definite matrices, where $I$ is the identity operator), we deduce that the function
$$
Fh(m,n) + \|h_2\|_1\delta_{mn} = Fh_1(m,n) + (\|h_2\|_1\delta_{mn} - Fh_2(m,n))
$$
is positive definite. By the Cauchy-Schwarz inequality we have
$$
|Fh(0,n)|^2 \leq (Fh(0,0) + \|h_2\|_1)(Fh(n,n) + \|h_2\|_1)
$$
for every $n\geq 1$, and hence
\begin{align}\label{eq:q-cs}
|\dot h(n)|^2 \leq e(Fh(n,n) + \|h_2\|_1),
\end{align}
where we, in order to shorten notation, have put
$$
e = \left(1 - \frac1q\right)^{-2}\left(Fh(0,0) + \|h_2\|_1\right).
$$
If $e$ is zero, then clearly $\dot h(k) = 0$ when $k\geq 1$. Suppose $e > 0$. Then from \eqref{eq:q-cs} we get
\begin{align*}
\frac{|\dot h(n)|^2}{e} - \|h_2\|_1 &\leq  \sum_{k=0}^n \frac{h(n-k,n-k)}{q^k} \\
&= \sum_{k=0}^n \frac{\dot h(2n-2k)}{q^k} \\
&\leq \frac{q}{q-1}\max\{|\dot h(2n - 2k)| \mid 0\leq k\leq n\} \\
&\leq 2\max\{|\dot h(2k)| \mid 0\leq k\leq n\}.
\end{align*}
In particular we have the following useful observation. Let $a = 2e$ and $b = \|h_2\|_1/2$. Then for each $n\in\N$ there is a $k\leq 2n$ such that
$$
|\dot h(k)| \geq \frac{|\dot h(n)|^2}{a} - b.
$$
We will now show that $|\dot h(n)| \leq 2a+b$ for every $n$. Suppose by contradiction that $|\dot h(n_0)| > 2a + b$ for some $n_0$. We claim that this assumption will lead to the following. For each $m\in\N_0$ there is a $k_m \leq 2^{m+1} n_0$ such that
\begin{align}\label{eq:q-exp}
|\dot h(k_m)| \geq 2^{2^m}(2a+b).
\end{align}
We prove this by induction over $m$. From our observation above we get that there is a $k_0 \leq 2n_0$ such that
$$
|\dot h(k_0)| \geq \frac{|\dot h(n_0)|^2}{a} - b \geq 4a + \frac{b^2}{a} + 4b - b \geq 2(2a + b),
$$
and so \eqref{eq:q-exp} holds for $m = 0$.

Assume that we have found $k_0,\ldots,k_m$. Using our observation we may find $k_{m+1} \leq 2 k_m$ such that
\begin{align*}
|\dot h(k_{m+1})| &\geq \frac{|\dot h(k_m)|^2}{a} - b \geq \frac{(2^{2^m} (2a + b))^2}{a} - b \\
&= 2^{2^{m+1}} \frac{4a^2 + b^2 + 4ab}{a} - b \geq 2^{2^{m+1}} (4a + 4b) - b \\
&\geq 2^{2^{m+1}} (2a +b).
\end{align*}
Finally, note that $k_{m+1} \leq 2k_m \leq 2(2^{m+1}n_0) = 2^{m+2}n_0$ as desired. This proves \eqref{eq:q-exp}. But clearly \eqref{eq:q-exp} is in contradiction with \eqref{eq:linear-bound}, and so we conclude that $|\dot h(n)| \leq 2a + b$ for all $n\in\N_0$. This proves that $\dot h$ is bounded, and hence $\phi$ is linearly bounded.

\end{proof}

In \cite{HSS} the following theorem is proved (Theorem 5.2).
\begin{thm}\label{thm:q-HSS}
Let $\F_n$ be the free group on $n$ generators ($2 \leq n < \infty$), let $\phi:\F_n\to \C$ be a radial function, and let $\dot\phi:\N_0\to\C$ be as in Definition~\ref{defi:radial}. Finally, let $h=(h_{ij})_{i,j\in\N_0}$ be the Hankel matrix given by $h_{ij}=\dot\varphi(i+j)-\dot\varphi(i+j+2)$ for $i,j\in\N_0$. Then the following are equivalent:
	\begin{itemize}
		\item [(i)]$\phi$ is a Herz-Schur multiplier on $\F_n$,
		\item [(ii)]$h$ is of trace class.
	\end{itemize}
	If these two equivalent conditions are satisfied, then there exist unique constants $c_\pm\in\C$ and a unique $\dot\psi:\N_0\to\C$ such that
	\begin{align}\label{eq:q-pres}
		\dot\phi(k)=c_++c_-(-1)^k+\dot\psi(k)\qquad(k\in\N_0)
	\end{align}
	and
	\begin{equation*}
		\lim_{k\to\infty}\dot\psi(k)=0.
	\end{equation*}
	Moreover, with $q = 2n - 1$
	\begin{equation*}
		\|\varphi\|_\HS=|c_+|+|c_-|+	\| Fh\|_1,
	\end{equation*}
	where $F$ is the operator defined by~\eqref{eq:F}.
\end{thm}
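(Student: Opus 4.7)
The plan is to reduce the theorem to the corresponding results for functionals and multipliers on the Toeplitz algebra — namely Propositions~\ref{prop:eq-norm} and~\ref{prop:q-HSS28} — by exploiting the fact that the Cayley graph of $\F_n$ is the $(q+1)$-regular tree featured in Section~\ref{sec:FG}.

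First I would fix the standard generating set of $\F_n$ so that its Cayley graph is the homogeneous tree $X$ of degree $q+1=2n$, and note that the word length $|y^{-1}x|$ equals the tree distance $d(x,y)$. Associate to $\phi$ the Hankel kernel $\Phi:\N_0^2\to\C$ given by $\Phi(m,n)=\dot\phi(m+n)$. Since $m(x,y)+n(x,y)=d(x,y)$ for the indices of Definition~\ref{defi:mn}, the Herz-Schur kernel $\hat\phi(x,y)=\dot\phi(d(x,y))$ on $\F_n$ coincides with the tree kernel $\tilde\Phi$ from that definition, so $\|\phi\|_\HS=\|\tilde\Phi\|_{S(X)}$.

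Next I would transfer this Schur-norm computation to a norm computation on the Toeplitz algebra. Coburn's theorem applied to the proper isometry $U$ on $\ell^2(X)$ yields a $^*$-isomorphism $\beta:C^*(S)\to C^*(U)$ with $\beta(S_{m,n})=U_{m,n}$, and the identity $m_{\tilde\Phi}(U_{m,n})=\Phi(m,n)U_{m,n}$ recorded just before Proposition~\ref{prop:eq-norm} gives $\beta^{-1}\circ m_{\tilde\Phi}|_{C^*(U)}\circ\beta=N_\Phi$. The main obstacle here is to show that the full Schur multiplier norm $\|\tilde\Phi\|_{S(X)}$ on $B(\ell^2(X))$ actually equals the norm of the restriction of $m_{\tilde\Phi}$ to the ``radial'' subalgebra $C^*(U)$; this is a special feature of radial kernels on homogeneous trees, and it would require a dedicated averaging argument over the tree isometries that fix $C^*(U)$ pointwise. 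Granting this, one obtains $\|\phi\|_\HS=\|N_\Phi\|$, and Proposition~\ref{prop:eq-norm} upgrades this to $\|\phi\|_\HS=\|\chi_\Phi\|$.

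Finally I would apply Proposition~\ref{prop:q-HSS28} to $\Phi$. A direct computation gives $(\Phi-\Phi\circ\sigma)(i,j)=\dot\phi(i+j)-\dot\phi(i+j+2)=h_{ij}$, matching the Hankel matrix in the statement, and since $F$ is bounded with bounded inverse on $B_1(\ell^2(\N_0))$, $h$ and $Fh$ are simultaneously of trace class. The auxiliary requirement $\Phi_0\in B(\Z)$ of Proposition~\ref{prop:q-HSS28} becomes automatic in the Hankel setting: trace-class-ness of $h$ forces the anti-diagonals to behave nicely enough that both parity-restricted limits $\lim_k\dot\phi(2k)$ and $\lim_k\dot\phi(2k+1)$ exist, which I would verify by a Nehari/Kronecker-type analysis of trace class Hankel matrices. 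Writing these limits as $c_++c_-$ and $c_+-c_-$ produces unique constants $c_\pm$, and setting $\dot\psi(k)=\dot\phi(k)-c_+-c_-(-1)^k$ yields the decomposition~\eqref{eq:q-pres} with $\dot\psi(k)\to 0$. The limit kernel is then $\Phi_0(j)=c_++c_-(-1)^j$, corresponding under $B(\Z)\simeq M(\mb T)$ to the atomic measure $c_+\delta_1+c_-\delta_{-1}$ whose total variation is $|c_+|+|c_-|$ (exactly as in the last step of the proof of Proposition~\ref{prop:norms}). Proposition~\ref{prop:q-HSS28} now gives $\|\chi_\Phi\|=\|Fh\|_1+|c_+|+|c_-|$, which is the claimed norm formula.
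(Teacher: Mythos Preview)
The paper does not prove this theorem at all: it is quoted verbatim from \cite{HSS} (as Theorem~5.2 there) and used as a black box, in particular as input to Proposition~\ref{prop:q-norms}. So there is no proof in the present paper to compare your proposal against.

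That said, your outline is essentially the argument of \cite{HSS} itself, routed through the present paper's Propositions~\ref{prop:q-HSS28} and~\ref{prop:eq-norm}. Two remarks are in order. First, the step you flag as the ``main obstacle'' --- showing that the full Schur norm $\|\tilde\Phi\|_{S(X)}$ equals the norm of the restriction $m_{\tilde\Phi}|_{C^*(U)}$ --- is not actually an obstacle once you read the proof of Proposition~\ref{prop:eq-norm} carefully: that proof produces the bound $\|\tilde\phi\|_S \le \|Fh\|_1 + \|\phi_0\|_{B(\Z)} = \|\chi_\phi\|$ for the Schur norm on all of $B(\ell^2(X))$ (not merely on $C^*(U)$), and combining this with the trivial chain $\|\chi_\phi\|\le\|N_\phi\|\le\|\tilde\phi\|_S$ gives equality throughout; no averaging over tree isometries is needed. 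Second, be aware that Propositions~\ref{prop:q-HSS28} and~\ref{prop:eq-norm} are themselves proved in this paper by invoking the proof of Theorem~2.3 and Proposition~2.8 of \cite{HSS}, which are precisely the technical lemmas that \cite{HSS} uses to establish its Theorem~5.2; so while your deduction is logically sound, it is not an independent proof --- you are reassembling the same ingredients in a slightly different order. The part you gloss over with ``Nehari/Kronecker-type analysis'' (the existence of the parity limits $c_\pm$ from trace-class-ness of $h$) is the one genuinely additional piece of content, and it does require a short separate argument.
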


\begin{prop}\label{prop:q-norms}
If $\phi:\F_n\to\C$ is a radial function, and $\tilde\phi$ is as in Definition~\ref{defi:radial}, then $\|\chi_{\tilde\phi}\| = \|\phi\|_\HS$.
\end{prop}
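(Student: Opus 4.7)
The plan is to follow the same path as the proof of Proposition~\ref{prop:norms} for $\F_\infty$, but using the $q$-analogues of the two key ingredients: Theorem~\ref{thm:q-HSS} for the Herz-Schur norm of a radial multiplier on $\F_n$, and Proposition~\ref{prop:q-HSS28} for the norm of the functional $\chi_{\tilde\phi}$ on the Toeplitz algebra. Both formulas feature the same trace-class expression $\|Fh\|_1$, where $h = \tilde\phi-\tilde\phi\circ\sigma$ is the Hankel matrix with entries $h_{ij} = \dot\phi(i+j)-\dot\phi(i+j+2)$, so the content of the proposition reduces to matching the remaining ``boundary'' contributions.

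First I would reduce to the nontrivial case. If $Fh \notin B_1(\ell^2(\N_0))$, then Theorem~\ref{thm:q-HSS} gives $\|\phi\|_\HS = \infty$ and Proposition~\ref{prop:q-HSS28} gives $\|\chi_{\tilde\phi}\| = \infty$, so the equality holds trivially. Assume then that $Fh$ is of trace class. By Theorem~\ref{thm:q-HSS} there exist unique constants $c_\pm$ and a sequence $\dot\psi(k)\to 0$ with $\dot\phi(k) = c_+ + (-1)^k c_- + \dot\psi(k)$, and
$$
\|\phi\|_\HS = |c_+|+|c_-|+\|Fh\|_1.
$$

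Next I would compute the function $\tilde\phi_0$ appearing in Proposition~\ref{prop:q-HSS28}. Since $\tilde\phi(m+k,n+k)=\dot\phi(m+n+2k)$ and $m+n$ has the same parity as $m-n$, the decomposition above yields
$$
\tilde\phi_0(m-n) = \lim_{k\to\infty} \dot\phi(m+n+2k) = c_+ + (-1)^{m-n}c_-.
$$
In particular, $\tilde\phi_0$ is well-defined and belongs to $B(\Z)$, so Proposition~\ref{prop:q-HSS28} is applicable and gives
$$
\|\chi_{\tilde\phi}\| = \|Fh\|_1 + \|\tilde\phi_0\|_{B(\Z)}.
$$

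It remains to identify $\|\tilde\phi_0\|_{B(\Z)} = |c_+|+|c_-|$. Under the standard isometric isomorphism $B(\Z)\simeq C(\mb T)^*$ where $\phi_0(n) = \int_{\mb T} z^n\,d\mu(z)$ corresponds to $\mu$ (equipped with the total variation norm), the functional $\nu\in C(\mb T)^*$ defined by $\nu(f) = c_+ f(1) + c_- f(-1)$ satisfies $\nu(z\mapsto z^j) = c_+ + (-1)^j c_- = \tilde\phi_0(j)$, so $\nu$ is the measure corresponding to $\tilde\phi_0$. Its total variation is $|c_+|+|c_-|$, and by uniqueness of the measure this equals $\|\tilde\phi_0\|_{B(\Z)}$. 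Combining the two displayed equalities yields $\|\chi_{\tilde\phi}\| = \|\phi\|_\HS$. There is no real obstacle here; the argument is a direct transcription of the $\F_\infty$ case, with Lemma~\ref{lem:Glim} (or equivalently the passage through $G$) ensuring that the limits $\lim_k \tilde\phi(m+k,n+k)$ can be read off the trace-class property of $Fh$.
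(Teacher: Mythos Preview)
Your proof is correct and follows essentially the same route as the paper's own proof: reduce to the trace-class case, invoke Theorem~\ref{thm:q-HSS} and Proposition~\ref{prop:q-HSS28} to express both norms as $\|Fh\|_1$ plus a boundary term, and then identify $\|\tilde\phi_0\|_{B(\Z)} = |c_+|+|c_-|$ via the measure $\nu = c_+\delta_1 + c_-\delta_{-1}$. The only cosmetic difference is that the paper phrases the reduction in terms of $h$ being trace class rather than $Fh$, and your final remark about Lemma~\ref{lem:Glim} is unnecessary since the limit $\tilde\phi_0$ is computed directly from the decomposition $\dot\phi(k)=c_++(-1)^kc_-+\dot\psi(k)$.
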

\begin{proof}
Let $h_{ij} = \tilde\phi(i,j) - \tilde\phi(i+1,j+1)$. From Theorem~\ref{thm:q-HSS} and Proposition~\ref{prop:q-HSS28} we see that it suffices to consider the case where $h$ is the matrix of a trace class operator, since otherwise $\|\chi_{\tilde\phi}\| = \|\phi\|_\HS = \infty $. If $h$ is of trace class, then we let $\tilde\phi_0(n) = \lim_k \tilde\phi(k+n,k)$. From Theorem~\ref{thm:q-HSS} and Proposition~\ref{prop:q-HSS28} it follows that
\begin{align*}
\|\chi_{\tilde\phi}\| &= \| Fh \|_1 + \|\tilde\phi_0\|_{B(\Z)}, \\
\|\phi\|_\HS &= \| Fh \|_1 + |c_+| + |c_-|,
\end{align*}
where $c_\pm$ are the constants obtained in Theorem~\ref{thm:q-HSS}. It follows from \eqref{eq:q-pres} that
$$
\tilde\phi_0(n) = c_+ + (-1)^n c_-.
$$
Now we only need to see why $|c_+| + |c_-| = \|\tilde\phi_0\|_{B(\Z)}$.

Let $\nu\in C(\mb T)^*$ be given by $\nu(f) = c_+f(1) + c_-f(-1)$ for all $f\in C(\mb T)$. Observe that $\nu(z\mapsto z^n) = c_+ + (-1)^nc_-$. Hence $\nu$ corresponds to $\tilde\phi_0$ under the isometric isomorphism $B(\Z) \simeq C(\mb T)^*$. Hence,
$$
\|\tilde\phi_0\|_{B(\Z)} = ||\nu|| = |c_+| + |c_-|.
$$
This completes the proof.
\end{proof}

\begin{thm}
If $\phi:\F_n\to\R$ is a radial function such that $\|e^{-t\phi}\|_\HS \leq 1$ for each $t> 0$, then there are constants $a,b\geq0$ such that $\phi(x) \leq b + a|x|$ for all $x\in\F_n$. Here $|x|$ denotes the word length function on $\F_n$.
\end{thm}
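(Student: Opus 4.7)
The plan is to mirror, essentially verbatim, the proof of Theorem~\ref{thm:main-finite} for $\F_\infty$, substituting each ingredient by its $\F_n$-counterpart developed in Section~\ref{sec:finite}. First I would pass from $\phi$ on $\F_n$ to the associated Hankel kernel $\tilde\phi:\N_0^2\to\R$ via $\tilde\phi(m,n) = \dot\phi(m+n)$, as in Definition~\ref{defi:radial}. Because $\phi$ is real-valued and radial (hence symmetric on $\F_n$), the kernel $\tilde\phi$ is real and symmetric, and in particular hermitian. Since $\phi$ radial implies $e^{-t\phi}$ radial with $\widetilde{e^{-t\phi}} = e^{-t\tilde\phi}$, the hypothesis $\|e^{-t\phi}\|_{B_2} \leq 1$ translates, via Proposition~\ref{prop:q-norms}, into $\|\chi_{e^{-t\tilde\phi}}\| \leq 1$ for every $t>0$.

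Next I would invoke Theorem~\ref{thm:q-split} on the hermitian function $\tilde\phi$: the norm bound $\|\chi_{e^{-t\tilde\phi}}\| \leq 1$ for all $t>0$ is exactly equivalent to $G\tilde\phi \in \mc S$. Since $\tilde\phi$ is by construction a Hankel function, I am now precisely in a position to apply Proposition~\ref{prop:q-bound}, which gives the desired linear bound: there exist $a,b\geq 0$ such that
$$
|\tilde\phi(m,n)| \leq b + a(m+n) \qquad\text{for all } m,n\in\N_0.
$$
Taking $n=0$ and setting $m=|x|$, this yields $|\dot\phi(|x|)| \leq b + a|x|$, whence $\phi(x) \leq b + a|x|$ for all $x\in\F_n$.

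This will complete Theorem~\ref{thm:B}, since the $\F_\infty$ case is already handled by Theorem~\ref{thm:main-finite}. No genuinely new work is needed at this stage: all the hard analysis was already absorbed into Propositions~\ref{prop:q-norms} and~\ref{prop:q-bound} and Theorem~\ref{thm:q-split}. The only potential subtlety is bookkeeping around the transformations $F$ and $G$ (which intertwine the radial/Herz-Schur picture on $\F_n$ with the functional picture on the Toeplitz algebra), but since these have been fully set up in Section~\ref{sec:FG} and used in the proofs of the three invoked results, there is no obstacle to overcome here; the proof is a short chain of three applications.
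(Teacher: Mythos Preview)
Your proposal is correct and follows essentially the same argument as the paper: pass to the Hankel kernel $\tilde\phi$, use Proposition~\ref{prop:q-norms} to translate the Herz-Schur norm bound into $\|\chi_{e^{-t\tilde\phi}}\|\leq 1$, apply Theorem~\ref{thm:q-split} to obtain $G\tilde\phi\in\mc S$, and finish with Proposition~\ref{prop:q-bound}. The paper's proof is exactly this chain of three applications, with no additional steps.
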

\begin{proof}
Suppose $\phi:\F_n\to\R$ is a radial function such that $\|e^{-t\phi}\|_\HS \leq 1$ for each $t> 0$, and let $\tilde\phi$ be as in Definition~\ref{defi:radial}. First observe that $\tilde\phi$ is real and symmetric, and hence hermitian. From Proposition~\ref{prop:q-norms} we get that $\| \chi_{e^{-t\tilde\phi}} \| \leq 1$ for every $t > 0$, so Theorem~\ref{thm:q-split} implies that $G(\tilde\phi)\in \mc S$. Since $\tilde\phi$ is a Hankel function, Proposition~\ref{prop:q-bound} ensures that
$$
|\tilde\phi(m,n)| \leq b + a(m+n) \quad\text{for all } m,n\in\N_0
$$
for some constants $a$ and $b$. This implies that
$$
\phi(x) \leq b + a|x| \quad\text{for all } x\in\F_n.
$$
\end{proof}

\section*{Acknowledgements}
The author wishes to thank Uffe Haagerup for suggesting the problems and for valuable discussions on the subject. The author would also like to thank Tim de Laat for many useful comments.

%\bibliographystyle{alpha}
%\bibliographystyle{plain}
%\bibliographystyle{abbrv}
%\bibliographystyle{plainnatDNA}
%\bibliography{knudbybib}

\end{document}